\newtheorem{theorem}{Theorem}[section]
\newtheorem*{theorem*}{Theorem}
\newenvironment{introtheorem}[1]
  {\intro}
  {\endintro}
\newtheorem{proposition}[theorem]{Proposition}
\newtheorem{corollary}[theorem]{Corollary}
\newtheorem{lemma}[theorem]{Lemma}
\theoremstyle{remark}
\newtheorem*{remark}{Remark}
\newcommand{\tO}{\mathtt 0}                            
\newcommand{\tL}{\mathtt 1}                            
\newcommand{\tZ}{\mathtt 2}                            
\newcommand{\tM}{\mathtt 3}                            
\newcommand{\tQ}{\mathtt 4}                            
\newcommand{\ta}{\mathtt a}                            
\newcommand{\tba}{\bar{\mathtt a}}                            
\newcommand{\tb}{\mathtt b}                            
\newcommand{\tbb}{\bar{\mathtt b}}                       
\newcommand{\tc}{\mathtt c}                            
\newcommand{\bt}{\mathbf t}                         
\newcommand{\tpsi}{\sigma}
\newcommand{\diffseq}{\mathbf B} 
\newcommand{\bardiffseq}{\bar{\mathbf B}}
\newcommand{\autoseq}{\mathbf A}
\newcommand{\autoseqplus}{\mathbf A^{\!+}}
\newcommand{\barautoseq}{\overline{\mathbf A}}
\newcommand{\clength}{0.35} 
\newcommand{\Lbleft}{
\begin{tikzpicture}[xscale=0.2,yscale=1,anchor=base, baseline]
\node[inner sep=1pt] (a1)	at (0,0)	{$\tb$};
\draw (a1.south)  -- ++(0,-0.1) --  ++(-\clength,0);
\end{tikzpicture}
}
\newcommand{\Lbright}{
\begin{tikzpicture}[xscale=0.2,yscale=1,anchor=base, baseline]
\node[inner sep=1pt] (a1)	at (0,0)	{$\tb$};
\draw (a1.south)  -- ++(0,-0.1) --  ++(\clength,0);
\end{tikzpicture}
}
\newcommand{\Lbbright}{
\begin{tikzpicture}[xscale=0.2,yscale=1,anchor=base, baseline]
\node[inner sep=1pt] (a1)	at (0,0)	{$\tbb$};
\draw (a1.south)  -- ++(0,-0.1) --  ++(\clength,0);
\end{tikzpicture}
}\newcommand{\Lbbleft}{
\begin{tikzpicture}[xscale=0.2,yscale=1,anchor=base, baseline]
\node[inner sep=1pt] (a1)	at (0,0)	{$\tbb$};
\draw (a1.south)  -- ++(0,-0.1) --  ++(-\clength,0);
\end{tikzpicture}
}
\newcommand{\Lcleft}{
\begin{tikzpicture}[xscale=0.2,yscale=1,anchor=base, baseline]
\node[inner sep=1pt] (a1)	at (0,0)	{$\tc$};
\draw (a1.south)  -- ++(0,-0.1) --  ++(-\clength,0);
\end{tikzpicture}
}
\newcommand{\Lcright}{
\begin{tikzpicture}[xscale=0.2,yscale=1,anchor=base, baseline]
\node[inner sep=1pt] (a1)	at (0,0)	{$\tc$};
\draw (a1.south)  -- ++(0,-0.1) --  ++(\clength,0);
\end{tikzpicture}
}
\DeclareMathOperator{\e}{\mathrm{e}}                   
\DeclareMathOperator{\realpart}{\mathrm {Re}}
\title{Gaps in the Thue--Morse word}
\author{Lukas Spiegelhofer
\\
Montanuniversit\"at Leoben, Austria}
\begin{document}
\maketitle
\begin{abstract}
The Thue--Morse sequence is a prototypical automatic sequence found in diverse areas of mathematics, and in computer science.
We study occurrences of factors $w$ within this sequence, more precisely, the sequence of gaps between consecutive occurrences.
This gap sequence is morphic; we prove that it is not automatic
as soon as the length of $w$ is at least $2$,
thereby answering a question by J.~Shallit in the affirmative.
We give an explicit method to compute the \emph{discrepancy} of the number of occurrences of
the block $\mathtt{01}$ in the Thue--Morse sequence.
We prove that the sequence of discrepancies is the sequence of output sums of a certain base-$2$ transducer.
\end{abstract}
\renewcommand{\thefootnote}{\fnsymbol{footnote}}
\footnotetext{\emph{2020 Mathematics Subject Classification.}
Primary: 68Q45, 68R15; Secondary: 11A63}
\footnotetext{\emph{Key words and phrases.}
Thue--Morse sequence, subword occurrence, morphic sequence, $k$-kernel}%

\footnotetext{The author was supported by the Austrian Science Fund (FWF), project F5502-N26, which is a part of the Special Research Program `Quasi Monte Carlo methods:  Theory and Applications',
and by the FWF-ANR project ArithRand, grant numbers I4945-N and ANR-20-CE91-0006.}
\renewcommand{\thefootnote}{\arabic{footnote}} 

\section{Introduction and main result}
Automatic sequences can be defined via deterministic finite automata with output (DFAO):
feeding the base-$q$ expansion (where $q\geq 2$ is an integer) of $0,1,2,\ldots$ into such an automaton, we obtain an automatic sequence as its output, and each automatic sequence is obtained in this way.
One of the simplest automatic sequences --- in terms of the size of the defining substitution --- is the Thue--Morse sequence $\bt$.
It is the fixed point of the substitution $\tau$ given by
\begin{equation}\label{eqn_tau}
\tau: \tO\mapsto\tO\tL,\quad \tL\mapsto\tL\tO,
\end{equation}
starting with $\tO$:
\begin{equation}\label{eqn_TM_32}
\mathbf t=\tau^\omega(\tO)=
\tO\tL\tL\tO\tL\tO\tO\tL\tL\tO\tO\tL\tO\tL\tL\tO
\tL\tO\tO\tL\tO\tL\tL\tO\tO\tL\tL\tO\tL\tO\tO\tL
\cdots.
\end{equation}
(Here $\tau^\omega(\tO)$ denotes the point-wise limit of the iterations $\tau^k(\tO)$, in symbols
$\tau^\omega(\tO)\bigr\rvert_j=\lim_{k\rightarrow\infty}\tau^k(\tO)\bigr\rvert_j$.
We use analogous notation in other places too.)
Occurrences of this sequence in different areas of mathematics can be found in the paper~\cite{AS1999} by Allouche and Shallit, which also offers a good bibliography.
Another survey paper on the Thue--Morse sequence was written by Mauduit~\cite{M2001}.
Much more information concerning automatic and general morphic sequences is presented in the book~\cite{AlloucheShallitBook} by Allouche and Shallit.

Terms $1$ through $12$ of the Thue--Morse sequence (the $0$th term is omitted) make a peculiar appearance in \emph{The Simpsons Movie}~\cite{S}:
\begin{quoting}
  Russ Cargill: `\,[\,\ldots] I want $10,000$ tough guys, and I want $10,000$ soft guys to make the tough guys look tougher! And here's how I want them arranged:\\tough, tough, soft, tough, soft, soft, tough, tough, soft, soft, tough, soft.'
\end{quoting}

This could, of course, be a coincidence.
A different, more sensible explanation of this appearance is along the lines of Brams and Taylor~\cite[36--44]{BT1999}.
They rediscover the Thue--Morse sequence while seeking \emph{balanced alternation} between two parties `Ann' and `Ben'. However, Brams and Taylor do not attribute the resulting sequence to Thue and Morse (and neither to Prouhet).

We are interested in counting the number of times that a word occurs as a \emph{factor} --- a contiguous finite subsequence --- of another word; 
a related concept is the \emph{binomial coefficient of two words}, which counts the corresponding number concerning general subsequences.
We wish to note the related paper by Rigo and Salimov~\cite{RigoSalimov2015},
defining $m$-\emph{binomial equivalence} of two words,
and the later paper by Lejeune, Leroy, and Rigo~\cite{LejeuneLeroyRigo2020},
where the Thue--Morse sequence is investigated with regard to this new concept.

It is well known that the sequence $\bt$ is \emph{uniformly recurrent}~\cite[Section 1.5.2]{Lothaire2002}.
That is, for each factor $w$ of $\bt$ there is a length $n$ with the following property: every contiguous subsequence of $\bt$ of length $n$ contains $w$ as a factor.
The factor $\tO\tL$ therefore appears in $\bt$ with bounded distances.
We are interested in the infinite word $\diffseq$ (over a finite alphabet) describing these differences.
We mark the occurrences of $\tO\tL$ in the first $64$ letters of $\bt$:
\begin{equation}\label{eqn_TMtoB}
\begin{array}{c}
\begin{tikzpicture}[xscale=0.176,yscale=1,anchor=base,inner sep=1pt,
every node/.style={font=\small}]
\node (a0)	at (0,0)	{$\tO$};
\node (a1)	at (1,0)	{$\tL$};
\node (a2)	at (2,0)	{$\tL$};
\node (a3)	at (3,0)	{$\tO$};
\node (a4)	at (4,0)	{$\tL$};
\node (a5)	at (5,0)	{$\tO$};
\node (a6)	at (6,0)	{$\tO$};
\node (a7)	at (7,0)	{$\tL$};
\node (a8)	at (8,0)	{$\tL$};
\node (a9)	at (9,0)	{$\tO$};
\node (a10)	at (10,0)	{$\tO$};
\node (a11)	at (11,0)	{$\tL$};
\node (a12)	at (12,0)	{$\tO$};
\node (a13)	at (13,0)	{$\tL$};
\node (a14)	at (14,0)	{$\tL$};
\node (a15)	at (15,0)	{$\tO$};
\node (a16)	at (16,0)	{$\tL$};
\node (a17)	at (17,0)	{$\tO$};
\node (a18)	at (18,0)	{$\tO$};
\node (a19)	at (19,0)	{$\tL$};
\node (a20)	at (20,0)	{$\tO$};
\node (a21)	at (21,0)	{$\tL$};
\node (a22)	at (22,0)	{$\tL$};
\node (a23)	at (23,0)	{$\tO$};
\node (a24)	at (24,0)	{$\tO$};
\node (a25)	at (25,0)	{$\tL$};
\node (a26)	at (26,0)	{$\tL$};
\node (a27)	at (27,0)	{$\tO$};
\node (a28)	at (28,0)	{$\tL$};
\node (a29)	at (29,0)	{$\tO$};
\node (a30)	at (30,0)	{$\tO$};
\node (a31)	at (31,0)	{$\tL$};
\node (a32)	at (32,0)	{$\tL$};
\node (a33)	at (33,0)	{$\tO$};
\node (a34)	at (34,0)	{$\tO$};
\node (a35)	at (35,0)	{$\tL$};
\node (a36)	at (36,0)	{$\tO$};
\node (a37)	at (37,0)	{$\tL$};
\node (a38)	at (38,0)	{$\tL$};
\node (a39)	at (39,0)	{$\tO$};
\node (a40)	at (40,0)	{$\tO$};
\node (a41)	at (41,0)	{$\tL$};
\node (a42)	at (42,0)	{$\tL$};
\node (a43)	at (43,0)	{$\tO$};
\node (a44)	at (44,0)	{$\tL$};
\node (a45)	at (45,0)	{$\tO$};
\node (a46)	at (46,0)	{$\tO$};
\node (a47)	at (47,0)	{$\tL$};
\node (a48)	at (48,0)	{$\tO$};
\node (a48)	at (49,0)	{$\tL$};
\node (a50)	at (50,0)	{$\tL$};
\node (a51)	at (51,0)	{$\tO$};
\node (a52)	at (52,0)	{$\tL$};
\node (a53)	at (53,0)	{$\tO$};
\node (a54)	at (54,0)	{$\tO$};
\node (a55)	at (55,0)	{$\tL$};
\node (a56)	at (56,0)	{$\tL$};
\node (a57)	at (57,0)	{$\tO$};
\node (a58)	at (58,0)	{$\tO$};
\node (a59)	at (59,0)	{$\tL$};
\node (a60)	at (60,0)	{$\tO$};
\node (a61)	at (61,0)	{$\tL$};
\node (a62)	at (62,0)	{$\tL$};
\node (a63)	at (63,0)	{$\tO$};
\node (a64)	at (63.7,0)	{$,$};
\draw[>=latex,->] (a0.north)++(0,0.3)  -- ++(0,-0.3);
\draw[>=latex,->] (a0.north)++(3,0.3)  -- ++(0,-0.3);
\draw[>=latex,->] (a0.north)++(6,0.3)  -- ++(0,-0.3);
\draw[>=latex,->] (a0.north)++(10,0.3)  -- ++(0,-0.3);
\draw[>=latex,->] (a0.north)++(12,0.3)  -- ++(0,-0.3);
\draw[>=latex,->] (a0.north)++(15,0.3)  -- ++(0,-0.3);
\draw[>=latex,->] (a0.north)++(18,0.3)  -- ++(0,-0.3);
\draw[>=latex,->] (a0.north)++(20,0.3)  -- ++(0,-0.3);
\draw[>=latex,->] (a0.north)++(24,0.3)  -- ++(0,-0.3);
\draw[>=latex,->] (a0.north)++(27,0.3)  -- ++(0,-0.3);
\draw[>=latex,->] (a0.north)++(30,0.3)  -- ++(0,-0.3);
\draw[>=latex,->] (a0.north)++(34,0.3)  -- ++(0,-0.3);
\draw[>=latex,->] (a0.north)++(36,0.3)  -- ++(0,-0.3);
\draw[>=latex,->] (a0.north)++(40,0.3)  -- ++(0,-0.3);
\draw[>=latex,->] (a0.north)++(43,0.3)  -- ++(0,-0.3);
\draw[>=latex,->] (a0.north)++(46,0.3)  -- ++(0,-0.3);
\draw[>=latex,->] (a0.north)++(48,0.3)  -- ++(0,-0.3);
\draw[>=latex,->] (a0.north)++(51,0.3)  -- ++(0,-0.3);
\draw[>=latex,->] (a0.north)++(54,0.3)  -- ++(0,-0.3);
\draw[>=latex,->] (a0.north)++(58,0.3)  -- ++(0,-0.3);
\draw[>=latex,->] (a0.north)++(60,0.3)  -- ++(0,-0.3);
\draw[>=latex,->] (a0.north)++(63,0.3)  -- ++(0,-0.3);
\end{tikzpicture}
\end{array}
\end{equation}
from which we see that 

\[\diffseq=\mathtt{334233243342433233423}\cdots.\]
The blocks $\tO\tO\tO$ and $\tL\tL\tL$ do not appear as factors in $\bt$, since $\bt$ is a concatenation of the blocks $\tO\tL$ and $\tL\tO$.
Therefore the gaps between consecutive occurrences of $\tO\tL$ in $\bt$ are in fact bounded by $4$, and clearly they are bounded below by $2$ (since different occurrences of $\tO\tL$ cannot overlap).
It follows that we only need three letters, $\mathtt2$, $\mathtt3$, and $\mathtt4$, in order to capture the gap sequence.

The set of \emph{return words}~\cite{Durand1998,JustinVuillon2000} of a factor $w$ of $\bt$ is the set of words $x$ of the form $x=w\tilde x$, where 
$w$ is not a factor of $\tilde x$, and $w\tilde xw$ is a factor of $\bt$.
The gap sequence is the sequence of lengths of words in the decomposition
$\bt=x_0x_1\cdots$ of the Thue--Morse sequence into return words of $\tO\tL$,
which are $\tO\tL\tL$, $\tO\tL\tO$, $\tO\tL\tL\tO$, and $\tO\tL$ in order of appearance.

An appearance of the factor $\tO\tL$ marks the beginning of a block of $\tL$s in $\bt$.
Moreover, no other block of $\tL$s can appear before the next appearance of $\tO\tL$: between two blocks of $\tL$s we can find a block of one or more $\tO$s, and the last $\tO$ in this block is followed by $\tL$.
The assumption that we see a block of $\tL$s before the next appearance of $\tO\tL$ therefore leads to a contradiction. This argument is clearly visible in~\eqref{eqn_TMtoB}.
The sequence $\diffseq$ therefore gives the distances of consecutive blocks of $\tL$s.
We will see in Lemma~\ref{lem_diffseq_substitution} that the sequence $\diffseq$ is \emph{morphic}, or \emph{substitutive}.
That is, it can be described as the coding of a fixed point of a substitution over a finite alphabet.
Jeffrey Shallit (private communication, July 2019) 
proposed to prove the non-automaticity of $\diffseq$ to the author.
In the present paper, we investigate the sequence $\diffseq$ and the closely related, very well-known automatic sequence $\autoseq$ defined in Section~\ref{sec_aux}.
In particular, we prove the following theorem.
\begin{theorem}\label{thm_main}
Let $w$ be a factor of the Thue--Morse word of length at least $2$, and $C$ the
sequence of gaps between consecutive occurrences of $w$ in $\bt$. Then
$C$ is morphic, but not automatic.
\end{theorem}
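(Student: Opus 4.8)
\emph{Proof strategy.}
The statement has a ``morphic'' half and a ``not automatic'' half. For $w=\tO\tL$ the morphic half is Lemma~\ref{lem_diffseq_substitution}; for a general factor $w$ it follows from the same mechanism: decomposing $\bt$ into the return words of $w$ produces a primitive substitutive (hence morphic) sequence over the alphabet of return words, and $C$ is the letter-to-letter image of that sequence under ``return word $\mapsto$ its length''. So all the work is in proving that $C$ is \emph{not} automatic, and I would first cut this down to one concrete case.

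\emph{Reduction to $w=\tO\tL$.} Using recognizability of the fixed point $\bt=\tau^\omega(\tO)$, every occurrence of a sufficiently long factor $w$ acquires a well-defined $\tau$-level parity and a ``parent'' occurrence of a factor of roughly half the length; pushing the occurrence set through one desubstitution step expresses the occurrences of $w$ as a union of two arithmetic progressions of occurrences of shorter factors, and correspondingly relates $C$ to the gap sequences of those shorter factors by operations (multiplication of letters by $2$, interleaving along a fixed residue pattern, and finitely many corrections) that preserve \emph{and} reflect automaticity. Iterating brings the length down to $2$ or $3$, and the finitely many short factors are then identified with shifts or simple images of $\diffseq$: e.g.\ the occurrences of $\tL\tL$ (resp.\ $\tO\tO$) in $\bt$ are, after one application of $\tau$, the occurrences of $\tO\tL$ (resp.\ $\tL\tO$), while the occurrences of $\tO\tL$ and $\tL\tO$ interleave, so all four length‑$2$ gap sequences have the same automaticity status; the length‑$3$ cases reduce similarly. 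Hence it suffices to prove that $\diffseq$ is not automatic.

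\emph{Non-automaticity of $\diffseq$.} Suppose it is automatic. By Lemma~\ref{lem_diffseq_substitution}, $\diffseq$ is a non-trivial coding of the fixed point of a primitive, aperiodic substitution on the four return words of $\tO\tL$, whose incidence matrix inherits the dominant eigenvalue $2$ of the incidence matrix of $\tau$; thus $\diffseq$ is $2$-substitutive in Durand's sense but not eventually periodic, so Cobham's theorem for substitutive sequences forces any base of automaticity to be multiplicatively dependent on $2$, hence a power of $2$, so $\diffseq$ is $2$-automatic. Then the partial-sum sequence $P(N):=\sum_{n<N}\diffseq(n)$ — which, normalising the first occurrence of $\tO\tL$ to $0$, is the position of the $(N{+}1)$‑st occurrence — is $2$-regular, and since $\diffseq$ has mean $3$ so is the centred sequence $S(N):=P(N)-3N$. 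On the other hand, the occurrence-counting function $F$ (for which $P$ is the left inverse) is, unconditionally, the summatory function of the $2$-automatic indicator of occurrences of $\tO\tL$, hence $2$-regular; and we obtain the discrepancy $3F(M)-M$ explicitly as the output-sum of a finite base‑$2$ transducer. Using that explicit description I would show that the inverse sequence $S$ cannot be $2$-regular: the transducer is driven by the \emph{non-constant} return-word lengths $2,2,3,1$, the surplus produced by the length‑$3$ return word is never reset on a fixed power-of-$2$ scale, and from this one reads off an infinite family of pairwise distinct sequences $\bigl(S(2^{a_i}n+b_i)\bigr)_n$ in the $2$-kernel of $S$, contradicting $2$-automaticity.

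\emph{The main obstacle.} The crux is exactly this last step, and it is genuinely delicate because there is \emph{no} eigenvalue obstruction available: the return-word substitution of $\tO\tL$ — indeed of every factor of $\bt$ — inherits the dominant eigenvalue $2$ of $\tau$, so Cobham/Durand only pins the base down and cannot by itself exclude automaticity. The non-automaticity is a pure non-uniformity phenomenon, and turning the heuristic ``the length‑$3$ return word desynchronises the base‑$2$ clock'' into a proof — i.e.\ verifying that infinitely many of the candidate kernel sequences really do differ and never eventually merge — is where the explicit transducer for the discrepancy, rather than any soft argument, is indispensable. A secondary technical point requiring care is the desubstitution reduction for general $w$, where the boundary and overlap corrections must be tracked closely enough that the ``multiply the gaps by $2$ and interleave'' description is correct on the nose.
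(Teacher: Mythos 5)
Your outline diverges from the paper at the one place where all the difficulty sits, and the route you choose there has a genuine gap. You reduce everything (reasonably) to the non-automaticity of $\diffseq$, invoke Cobham--Durand to pin the base down to $2$, pass to the partial-sum sequence $P(N)=\sum_{n<N}\diffseq_n$ and its centring $S(N)=P(N)-3N$, and then try to contradict ``$2$-automaticity'' by exhibiting infinitely many pairwise distinct sequences $\bigl(S(2^{a_i}n+b_i)\bigr)_n$ in the $2$-kernel of $S$. This does not yield a contradiction: $S$ is unbounded, so it was never going to be $2$-automatic; what follows from the hypothesis is only that $S$ is $2$-\emph{regular}, and $2$-regularity requires the kernel to span a finitely generated module, not to be finite. (The sequence $N\mapsto N$ already has infinitely many distinct kernel elements and is $2$-regular.) To make your route work you would have to prove that $P$ is not $2$-regular, i.e.\ that the module spanned by its kernel is not finitely generated, and the proposal contains no argument for that. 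Your appeal to the discrepancy transducer does not bridge the gap either: that transducer computes the counting function $F(M)=\#\{n<M:\bt_n\bt_{n+1}=\tO\tL\}$, which is unconditionally $2$-regular; $P$ is the \emph{inverse} of $F$, and transferring (non-)regularity across that inversion is precisely the unresolved crux you yourself flag. A secondary unjustified step is the claim that, because occurrences of $\tO\tL$ and $\tL\tO$ interleave, the two gap sequences ``have the same automaticity status'': interleaving lets you pass from automaticity of the interleaved sequence to automaticity of the pair-sums, but not in the direction you need.

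The paper's actual argument for the core step is entirely different and avoids both Cobham--Durand and partial sums. It proves (Proposition~\ref{prp_AP}) that the positions where the long prefixes $\psi^{4\mu}(\ta)$ occur in $\bardiffseq$ meet \emph{every} residue class $a+m\mathbb Z$ --- the even part of $m$ is handled by an explicit arithmetic progression of positions, the odd part by a missing-digit exponential-sum argument in the style of Erd\H{o}s--Mauduit--S\'ark\"ozy. Combined with Thue's squarefreeness of $\autoseq$ (which forbids squares of length $>2$ in $\diffseq$), this shows that two arithmetic subsequences of $\diffseq$ with the same modulus and different offsets can never coincide, so the $k$-kernel is infinite for every $k\ge2$ simultaneously. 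The case $w=\tL\tO$ is then treated via the palindrome property of $\tau^{2k}(\tO)$ (reading $\diffseq$ backwards) plus a pigeonhole argument on offsets, and general $w$ is reduced to the four length-$2$ cases using overlap-freeness of $\bt$ and the block decomposition into $\tau^k(x)\tau^k(y)$. If you want to salvage your plan, the piece you must supply is a proof that $P$ is not $2$-regular; as it stands, the proposal identifies the right obstacle but does not overcome it.
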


Note that the set of positions where a given factor $w$ appears in $\bt$ \emph{is} $2$-automatic --- that is, its characteristic sequence is automatic.
This follows from the following theorem by Brown, Rampersad, Shallit, and Vasiga~\cite[Theorem~2.1]{BRSV2006}.
\begin{introtheorem}{A}
Let $\mathbf a=a_0a_1a_2\cdots$ be a $k$-automatic sequence over the alphabet $\Delta$, and let $w\in\Delta^\ast$. Then the set of of positions $p$ such that $w$ occurs beginning at position $p$ is $k$-automatic.
\end{introtheorem}

Concerning factors of length $1$, the corresponding gap sequence is automatic too; this follows from~\cite{BlanchetSadriCurrieRampersadFox2014}.

The second part of our paper is concerned with the \emph{discrepancy} of occurrences of $\tO\tL$-blocks in $\bt$.
More precisely, assume that $N$ is a nonnegative integer.
We count the number of times the factor $\tO\tL$ occurs in the first $N$ terms of the Thue--Morse sequence, and compare it to $N/3$:
\begin{equation}\label{eqn_def_discrepancy}
D_N\coloneqq\#\bigl\{0\leq n<N:\bt_n=\tO,\bt_{n+1}=\tL\bigr\}-\frac N3.
\end{equation}
From Theorem~A we can immediately derive that the sequence $(D_N)_{N\ge0}$ is $2$-\emph{regular}~\cite{AlloucheShallit1992,AlloucheShallit2003} as the sequence of partial sums of a $2$-automatic sequence:
the sequence having
\[
\begin{cases}
2/3&\mbox{if }\bt_n\bt_{n+1}=\tO\tL;\\
-1/3&\mbox{otherwise}
\end{cases}
\]
as its $n$th term is automatic as the sum of four $2$-automatic sequences,
and $D_N$ is the sum of the first $N$ terms of this sequence~\cite[Theorem~3.1]{AlloucheShallit1992}.
Our second theorem shows, more specifically, that $D_N$ can be obtained as the output sum of a base-$2$ transducer~(see Heuberger, Kropf, and Prodinger \cite{HeubergerKropfProdinger2015}, in particular Remark~3.10 in that paper).

\begin{theorem}\label{thm_discrepancy}
The sequence $(D_N)_{N\geq 0}$ is the sequence of output sums of a base-$2$ transducer.
In particular, $D_N\leq C\log N$ for some absolute implied constant $C$.
Moreover,
\begin{equation}\label{eqn_discrepancy_13Z}
\{D_N:N\geq 0\}=\frac 13\mathbb Z.
\end{equation}
\end{theorem}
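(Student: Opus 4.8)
The plan is to reduce the whole statement to an explicit base-$4$ recursion for $D_N$. Put $e(N)\coloneqq\#\{0\le n<N:\bt_n\bt_{n+1}=\tO\tL\}$, so that $D_N=e(N)-N/3$, and split the occurrences of $\tO\tL$ in $\bt_0\cdots\bt_{N-1}$ according to the parity of the starting position. Since $\bt_{2k}\bt_{2k+1}=\tau(\bt_k)\in\{\tO\tL,\tL\tO\}$, the even-position occurrences are counted by the number of letters $\tO$ among $\bt_0,\dots,\bt_{\lceil N/2\rceil-1}$; and since $\bt_{2k+1}$ is the second and $\bt_{2k+2}$ the first letter of $\tau(\bt_k)$ resp.\ $\tau(\bt_{k+1})$, the odd-position occurrences of $\tO\tL$ are in bijection with the occurrences of the factor $\tL\tL$ near the start of $\bt$, and these --- by the same parity splitting, now applied to $\tL\tL$ --- are in bijection with occurrences of $\tO\tL$ at a quarter of the scale. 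Writing $s_2$ for the binary sum-of-digits function, the number of $\tO$'s among $\bt_0,\dots,\bt_{M-1}$ is $\tfrac12\bigl(M+\sum_{k<M}(-1)^{s_2(k)}\bigr)$, and the elementary identity $\sum_{k<M}(-1)^{s_2(k)}=0$ for even $M$ and $=(-1)^{s_2(\lfloor M/2\rfloor)}$ for odd $M$ (pair $2l$ with $2l+1$) then gives, after collecting the floors and ceilings and cancelling the $N/3$ contributions, the recursion
\[
D_{4q}=D_{4q+3}=D_q ,\qquad
D_{4q+1}=D_q+\tfrac12(-1)^{s_2(q)}+\tfrac16 ,\qquad
D_{4q+2}=D_q+\tfrac12(-1)^{s_2(q)}-\tfrac16 ,
\]
together with $D_0=0$.

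The transducer is then read off directly. The only part of the increment that is not a function of $N\bmod 4$ is the sign $(-1)^{s_2(q)}$ with $q=\lfloor N/4\rfloor$, i.e.\ the parity of the binary digits of $N$ strictly above its last two bits; when $N$ is read from the most significant digit this parity is known before the last digit is processed. Hence a base-$4$ transducer with two states --- ``the digits read so far have even, resp.\ odd, binary digit sum'' --- computes $D_N$: in state $p$ it reads a digit $d\in\{0,1,2,3\}$, outputs $0$ if $d\in\{0,3\}$, outputs $\tfrac16+\tfrac12(-1)^p$ if $d=1$, outputs $-\tfrac16+\tfrac12(-1)^p$ if $d=2$, and changes state exactly when $d\in\{1,2\}$; unrolling the recursion shows its total output on the base-$4$ expansion of $N$ equals $D_N$. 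Rewriting each base-$4$ digit as two binary digits turns this into a base-$2$ transducer in the sense of the cited framework. The bound $|D_N|\le C\log N$ with $C$ absolute is then immediate, since the output is a sum of $O(\log N)$ terms each of absolute value at most $\tfrac23$.

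For the set of values, the inclusion $\{D_N:N\ge0\}\subseteq\tfrac13\mathbb Z$ is trivial because $3D_N=3e(N)-N\in\mathbb Z$. For the reverse inclusion I would feed the transducer the two families $N_j\coloneqq(4^{2j}-1)/3$ and $N_j'\coloneqq 2(4^{2j}-1)/3$, $j\ge0$: the base-$4$ expansion of $N_j$ is the word $11\cdots1$ of length $2j$ and that of $N_j'$ the word $22\cdots2$ of length $2j$. On each of these the state alternates between its two values, so the outputs alternate between $\tfrac23$ and $-\tfrac13$ for $N_j$, and between $\tfrac13$ and $-\tfrac23$ for $N_j'$; summing $j$ such pairs gives $D_{N_j}=j/3$ and $D_{N_j'}=-j/3$ for all $j\ge0$. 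Letting $j$ run through the nonnegative integers therefore exhausts $\tfrac13\mathbb Z$, which completes the proof.

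I expect the bookkeeping in the first step to be the main obstacle: the four residue classes of $N$ modulo $4$ must be handled one at a time, and in each of them $\lceil N/2\rceil$, $\lfloor N/4\rfloor$ and the value of $\sum_{k<\lceil N/2\rceil}(-1)^{s_2(k)}$ have to be combined so that the fractional $N/3$ terms disappear and a clean recursion with integer-translated increments remains. One also has to verify that no memory beyond this single parity bit is required --- which is exactly what makes the machine finite-state. Once the recursion is in place, the transducer, the logarithmic bound, and the computation of the range follow with essentially no further effort.
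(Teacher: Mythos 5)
Your proposal is correct, and it takes a genuinely different and substantially more elementary route than the paper. You derive a closed base-$4$ recursion for $D_N$ directly from the parity structure $\bt_{2k}=\bt_k$, $\bt_{2k+1}=\overline{\bt_k}$: even-position occurrences of $\tO\tL$ reduce to counting $\tO$'s, odd-position occurrences reduce via $\tL\tL$ to occurrences of $\tO\tL$ at scale $N/4$, and the digit-sum identity collapses everything to a two-state base-$4$ machine whose state is the parity of $s_2(\lfloor N/4^{i+1}\rfloor)$. I checked the bookkeeping you flagged as the main risk: for $N=4q+r$ the inner count is $e(q)$ in all four residue classes, and the resulting recursion $D_{4q}=D_{4q+3}=D_q$, $D_{4q+1}=D_q+\tfrac16+\tfrac12(-1)^{s_2(q)}$, $D_{4q+2}=D_q-\tfrac16+\tfrac12(-1)^{s_2(q)}$ agrees with the paper's table of the first $48$ values of $3D_N$; your witnesses $(11\cdots1)_4$ and $(22\cdots2)_4$ for the range $\tfrac13\mathbb Z$ also check out (e.g.\ $D_5=\tfrac13$, $D_{10}=-\tfrac13$, $D_{85}=\tfrac23$). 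The paper instead routes the proof through the decorated seven-letter sequence $\autoseqplus$, its non-crossing matching, the notion of degree, and the rotation argument reducing $\autoseq$ to $(\ta\tb\tc)^\omega$; this yields a $196$-node base-$4$ transducer and the explicit values $D_{8(16^k-1)/3}=-k/3$, $D_{80(64^k-1)/63}=k/3$. What your approach buys is economy and self-containedness: a two-state machine and a proof independent of the whole Section~\ref{sec_explore} apparatus. What the paper's approach buys is structural information --- the matching/degree picture explains \emph{where} in $\autoseqplus$ the discrepancy accumulates and ties the transducer to the return-word combinatorics used elsewhere in the paper --- which a bare recursion does not provide. Both establish the theorem in full.
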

Note that the unboundedness of $D_N$ follows from Corollary~4.10 in the paper~\cite{BertheBernales} by Berth\'e and Bernales on balancedness in words.

\bigskip\noindent
\textbf{Plan of the paper.}
In Section~\ref{sec_nonautomaticity} we prove that the gap sequence for a factor $w$ of $\bt$ is not automatic. The central step of this proof is the case $w=\tO\tL$, which will be handled in the first three sub-sections.
Section~\ref{sec_general} reduces the general case to this special case.
In Section~\ref{sec_explore} we study the automatic sequence $\autoseq$ on the three symbols $\{\ta,\tb,\tc\}$, closely related to the gap sequences. In particular, we lift this sequence to the seven-letter alphabet
$K=\{\ta,\Lbbleft,\Lbbright,\Lbleft,\Lbright,\Lcleft,\Lcright\}$.
From this new sequence we can in particular read off the discrepancy $D_N$ easily, which leads to a proof of Theorem~\ref{thm_discrepancy}.

\section{Proving the non-automaticity of gap sequences}\label{sec_nonautomaticity}
The main part of the proof of Theorem~\ref{thm_main} concerns non-automaticity of the gaps between occurrences of $\tO\tL$. As a second step in our proof, the general case will be reduced to this one.
\subsection{An auxiliary automatic sequence}\label{sec_aux}
We start by defining a substitution $\varphi$ on three letters:
\begin{equation}\label{eqn_varphi_morphism}
\begin{array}{llll}
\varphi:&\ta\mapsto \ta\tb\tc,&\tb\mapsto \ta\tc,&
\tc\mapsto\tb.
\end{array}
\end{equation}
The morphism $\varphi$ can be extended to $\{\ta,\tb,\tc\}^\mathbb N$ by concatenation, and we denote this extension by $\varphi$ again.
The unique fixed point (of length $>0$) of $\varphi$ is
\[    \autoseq=\ta\tb\tc\ta\tc\tb\ta\tb\tc\tb\ta\tc
      \ta\tb\tc\ta\tc\tb\ta\tc\ta\tb\tc\tb
      \ta\tb\tc\ta\tc\tb\ta\tb\tc\tb\ta\tc
      \ta\tb\tc\tb\ta\tb\tc\ta\tc\tb\ta\tc\cdots.    \]
This fixed point is a \emph{morphic}, or \emph{substitutive}, sequence~\cite[Chapter~7]{AlloucheShallitBook}.
As a fixed point of $\varphi$ --- without having to apply a coding of the fixed point --- it is even \emph{pure morphic}.
The sequence $\autoseq$ is in fact $2$-automatic, which follows from Berstel~\cite[Corollaire~4]{B1979}.
It is a `hidden automatic sequence' as treated very recently by Allouche, Dekking, and Queff\'elec~\cite{AlloucheDekkingQueffelec2020}. In fact, every automatic sequence can also be written as a coding of a fixed point of a non-uniform morphism~\cite{AS2019} and this sense is a `hidden' automatic sequence.
We will re-state a corresponding $2$-uniform substitution found by Berstel further down.
The sequence $\autoseq$, called \emph{ternary Thue--Morse sequence} (for example, in the OEIS~\cite[\texttt{A036577}]{OEIS}), \emph{Istrail squarefree sequence}~\cite{AlloucheDekkingQueffelec2020, Istrail1977}, or \texttt{vtm}~\cite{BlanchetSadriCurrieRampersadFox2014}, is well-known.
Citing Dekking~\cite{Dekking2016}, we note that it appears in fact twelve times on the OEIS~\cite{OEIS}, featuring all renamings of the letters corresponding to permutations of the sets $\{0,1,2\}$ and $\{1,2,3\}$.
These twelve entries are \texttt{A005679}, \texttt{A007413}, and \texttt{A036577}--\texttt{A036586}.
The sequence $\autoseq$ encodes the gaps between consecutive $\tL$s in $\bt$~\cite{BlanchetSadriCurrieRampersadFox2014}.
Thue~\cite{T1912} showed that $\autoseq$ is squarefree,
while Rao, Rigo, and Salimov~\cite{RaoRigoSalimov2015} later proved the stronger statement that $\autoseq$ even avoids \emph{$2$-binomial squares}~\cite{RigoSalimov2015}, thus settling in particular the question whether $2$-abelian squares are avoidable over a $3$-letter alphabet.
We will use the squarefreeness property in our proof of Theorem~\ref{thm_main}.
\begin{lemma}[Thue]\label{lem_squarefree}
The sequence $\autoseq$ is squarefree.
That is, no factor of the form $CC$, where $C$ is a finite word over $\{\ta,\tb,\tc\}$ of length at least $1$, appears in $\autoseq$.
\end{lemma}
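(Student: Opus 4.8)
The plan is to establish squarefreeness of $\autoseq$ by reducing it to a finite combinatorial verification, exploiting the uniform recurrence and self-similar structure coming from the morphism $\varphi$. First I would observe that since $\autoseq$ is the fixed point of $\varphi$ with $\varphi(\ta)=\ta\tb\tc$, $\varphi(\tb)=\ta\tc$, $\varphi(\tc)=\tb$, the word $\autoseq$ decomposes canonically into blocks $\varphi(\ta),\varphi(\tb),\varphi(\tc)$ according to the letters of $\autoseq$ itself. A useful preliminary is \emph{synchronization}: the images $\ta\tb\tc$, $\ta\tc$, $\tb$ are short enough that one can recover, from any sufficiently long factor of $\autoseq$, the positions of the block boundaries (up to a bounded ambiguity). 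The key structural fact I would isolate is that $\ta$ occurs in $\autoseq$ exactly at the left end of each $\varphi$-block, so the $\ta$'s mark the block boundaries unambiguously; this makes the des\textbf{}ubstitution recognizable.

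Next I would argue by contradiction: suppose $CC$ is a factor of $\autoseq$ with $|C|\geq 1$ minimal among all squares. A short case check disposes of $|C|=1$ (no $\ta\ta$, $\tb\tb$, $\tc\tc$ occurs — this is visible from the images of $\varphi$, since two consecutive equal letters would have to arise inside some $\varphi(x)$ or across a block boundary, and neither $\ta\tb\tc$, $\ta\tc$, $\tb$ nor the pairs $\tc\ta$, $\tc\ta$ produced at boundaries gives a repeated letter). For $|C|\geq 2$, I would use the $\ta$-marking: the square $CC$ contains enough letters to determine block boundaries, and one shows that a square in $\autoseq$ must be the $\varphi$-image of a shorter square (or a near-square) in $\autoseq$, because the two copies of $C$ must be aligned with the block structure in the same way — otherwise the frequencies or the local patterns of $\ta$'s in the two copies would disagree. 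Applying $\varphi^{-1}$ (well-defined on factors thanks to recognizability) then yields a square $C'C'$ with $1\le|C'|<|C|$, contradicting minimality. A small amount of care is needed to handle the boundary letters that get absorbed or split when passing to preimages — typically one must check a few short factors by hand to confirm no exceptional square hides near the block boundaries.

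The main obstacle, and the step requiring the most care, is the \emph{desynchronization analysis}: showing that the two occurrences of $C$ in $CC$ are necessarily in the same phase relative to the $\varphi$-block decomposition. Because $\varphi$ is non-uniform (block lengths $3,2,1$), a naive length argument does not immediately force phase agreement; instead one must use the combinatorial constraint that within any window of bounded length the pattern of $\ta$'s already pins down the phase, together with the fact that $C$ must be long enough (once $|C|\ge 2$, hence $2|C|\ge 4$, which already spans at least one full short block) to contain such a window. Once phase agreement is in hand, the descent to $\varphi^{-1}(C)$ is routine, and the whole argument collapses to verifying squarefreeness of a short prefix of $\autoseq$ as the base case.

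\begin{lemma}[Thue]\label{lem_squarefree_restated}
The sequence $\autoseq$ is squarefree.
That is, no factor of the form $CC$, where $C$ is a finite word over $\{\ta,\tb,\tc\}$ of length at least $1$, appears in $\autoseq$.
\end{lemma}
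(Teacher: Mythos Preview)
The paper does not prove this lemma; it is stated as a classical result of Thue, with a citation to~\cite{T1912} and no argument given in the text. So there is no in-paper proof to compare your plan against.

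Your descent-via-recognizability outline is the standard strategy for this kind of statement and it works for $\varphi$. One correction is needed: you claim that ``$\ta$ occurs in $\autoseq$ exactly at the left end of each $\varphi$-block,'' but $\varphi(\tc)=\tb$ is a block of length one containing no $\ta$. The accurate statement is that every $\ta$ in $\autoseq$ is the left end of a $\varphi(\ta)$- or $\varphi(\tb)$-block; the remaining $\varphi(\tc)$-blocks are exactly those $\tb$'s not preceded by $\ta$ (equivalently, preceded by $\tc$, once length-$1$ squares have been excluded). With this fix recognizability does hold: the letter following each $\ta$ distinguishes $\varphi(\ta)=\ta\tb\tc$ from $\varphi(\tb)=\ta\tc$, and the predecessor criterion locates the $\varphi(\tc)$-blocks, so the block decomposition is determined by two-letter windows. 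Since the two copies of $C$ in a putative minimal square $CC$ are identical as words, their interior block structures must coincide; the single boundary position (the first letter of $C$, if it happens to be $\tb$) is the only place requiring a separate check, and that case analysis is short. The desubstitution then produces a strictly shorter square in $\autoseq$, contradicting minimality.
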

We have the following important relation between $\autoseq$ and our problem.
\begin{lemma}\label{lem_correspondence}
The Thue--Morse sequence $\bt$ can be recovered from $\autoseq$ via the substitution
\begin{equation}\label{eqn_TM_abc_correspondence}
f:\ta\mapsto \tO\tL\tL\tO\tL\tO,\quad \tb\mapsto \tO\tL\tL\tO,\quad \tc\mapsto \tO\tL,
\end{equation}
by concatenation:
we have
\begin{equation}\label{eqn_TM_autoseq_concatenation}
\bt=f(\autoseq_0)f(\autoseq_1)\cdots.
\end{equation}
\end{lemma}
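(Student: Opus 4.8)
The plan is to exploit the self-similar structure of the Thue--Morse word directly. First I would recall that $\bt$ is the concatenation of the blocks $\tO\tL$ and $\tL\tO$, and that the positions of these blocks are precisely the positions $2k$ for $k\ge 0$; moreover $\bt_{2k}\bt_{2k+1}=\tO\tL$ exactly when $\bt_k=\tO$, and $\bt_{2k}\bt_{2k+1}=\tL\tO$ exactly when $\bt_k=\tL$. In other words, reading $\bt$ two letters at a time and collapsing $\tO\tL\mapsto\tO$, $\tL\tO\mapsto\tL$ recovers $\bt$ itself. This is the standard ``de-substitution'' identity for $\bt$, equivalent to $\tau^2(\tO)=\tO\tL\tL\tO$, $\tau^2(\tL)=\tL\tO\tO\tL$. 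I would state this as a preliminary observation, since it is the engine of the argument.

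Next I would analyse where the factor $\tL$ sits in $\bt$. Since $\bt$ is a concatenation of $\tO\tL$ and $\tL\tO$, a letter $\tL$ at position $n$ is the second letter of an $\tO\tL$-block (i.e.\ $n$ odd and $\bt_{(n-1)/2}=\tO$), or the first letter of an $\tL\tO$-block (i.e.\ $n$ even and $\bt_{n/2}=\tL$). Reading $\bt$ and recording the gaps between consecutive $\tL$s, a maximal block of $\tL$s in $\bt$ has length $1$ or $2$ (no $\tL\tL\tL$), and likewise for $\tO$s. One works out the three possible ``distances between consecutive $\tL$s'': a gap of size $1$ inside an $\tL\tL$ block; a gap of size $2$ coming from the pattern $\tL\tO\tL$; a gap of size $3$ from $\tL\tO\tO\tL$. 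Translating these back through the collapsing map $\tO\tL\mapsto\tO$, $\tL\tO\mapsto\tL$ shows that a symbol $\autoseq_k$ should be $\ta$, $\tb$, or $\tc$ according to the local pattern of $\bt$ around position $k$, and that the $\bt$-block contributed by the $k$-th gap is exactly $f(\autoseq_k)$ as in \eqref{eqn_TM_abc_correspondence}: the word strictly between (and including the left one of) two consecutive $\tL$s, lifted via $\tO\tL\mapsto\tO\tL\cdot$ etc. Concretely, $\autoseq_k=\ta$ corresponds to the stretch $\tO\tL\tL\tO\tL\tO$ of $\bt$, $\tb$ to $\tO\tL\tL\tO$, and $\tc$ to $\tO\tL$, these being precisely the return words of $\tL$ read with the convention of starting each block at an $\tO\tL$.

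The heart of the proof is then to verify that the symbolic sequence $\autoseq$ defined this way is the fixed point of $\varphi$ from \eqref{eqn_varphi_morphism}, i.e.\ that the $\autoseq$-recoding commutes with substitution: $f\circ\varphi=\tau^{?}\circ f$ in the appropriate sense, or more simply that $f(\varphi(x))=\tau(\text{something})$ — I would instead check the cleaner identity $f(\varphi(\ta))=\tau^2(f(\ta))$ is \emph{not} quite what is needed and rather verify directly that applying $f$ letter-by-letter to $\varphi(\autoseq)$ gives the same concatenation as applying $f$ to $\autoseq$ and then applying $\tau$ the right number of times. The mechanical way: show $\tau(f(\ta))=f(\ta)f(\tb)f(\tc)=f(\varphi(\ta))$, $\tau(f(\tb))=f(\ta)f(\tc)=f(\varphi(\tb))$, $\tau(f(\tc))=f(\tb)=f(\varphi(\tc))$, by a finite check (for instance $\tau(\tO\tL)=\tO\tL\tL\tO=f(\tb)$, $\tau(\tO\tL\tL\tO)=\tO\tL\tL\tO\tL\tO\tO\tL$ — and here one notices $f(\tc)=\tO\tL$ so this is not literally $f$ of a single letter, so one must be slightly careful and instead track the ``offset'' introduced because $f$-blocks do not start at even positions of $\bt$). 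Because of this offset subtlety, I expect the main obstacle to be bookkeeping: proving that the decomposition of $\bt$ into the return-word blocks $\tO\tL\tL$, $\tO\tL\tO$, $\tO\tL\tL\tO$, $\tO\tL$ is consistent under $\tau$, equivalently that the map reading two $\bt$-blocks at a time and collapsing is well-defined on these return words and realises $\varphi$. Once the commutation $\tau\circ(f\text{ concatenated})=(f\text{ concatenated})\circ\varphi$ is established on the three letters, the identity \eqref{eqn_TM_autoseq_concatenation} follows by the usual fixed-point/uniqueness argument: both sides are fixed under the induced operator and agree on the prefix $f(\ta)=\tO\tL\tL\tO\tL\tO$, which is an honest prefix of $\bt$, so induction on the length of $\varphi^n(\ta)$ closes the proof.
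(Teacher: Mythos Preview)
Your third paragraph contains exactly the paper's proof: verify the commutation $\tau(f(x))=f(\varphi(x))$ for each letter $x\in\{\ta,\tb,\tc\}$, then induct on $k$ starting from $f(\ta\tb\tc)=\bt_{[0,12)}$. But you have buried this under two paragraphs of material that plays no role in proving the lemma as stated (the de-substitution identity for $\bt$ and the gap analysis for $\tL$s are motivation for why $\autoseq$ is interesting, not ingredients of the proof), and you have talked yourself into a phantom ``offset subtlety'' that does not exist.

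The three identities are exact, with no bookkeeping: for instance $\tau(f(\tb))=\tau(\tO\tL\tL\tO)=\tO\tL\tL\tO\tL\tO\tO\tL$, while $f(\varphi(\tb))=f(\ta\tc)=f(\ta)f(\tc)=\tO\tL\tL\tO\tL\tO\cdot\tO\tL$, and these are the same word. Your worry that this is ``not literally $f$ of a single letter'' is beside the point --- $\varphi(\tb)=\ta\tc$ has length $2$, so $f(\varphi(\tb))$ is \emph{supposed} to be a concatenation of two $f$-images. The paper's argument is a three-line finite check of $\tau\circ f=f\circ\varphi$ on letters, followed by the induction you already sketched; drop everything else.
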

We will prove this in a moment.
From this observation, noting also that each of the three words $f(\ta)$, $f(\tb)$, and $f(\tc)$ begins with $\tO\tL$, we see that we can extract from $\autoseq$ the sequence of gaps between occurrences of the factor $\tO\tL$ in $\mathbf t$:
each $\ta$ yields two consecutive gaps of size $3$, each $\tb$ yields a gap of size $4$, and each $\tc$ a gap of size $2$.
\begin{proof}[Proof of Lemma~\ref{lem_correspondence}]
We prove that for $k\geq 1$,
\begin{equation}\label{eqn_autoseq_bt}
\bt_{[0,6\cdot 2^k]}=
f(\autoseq_0)f(\autoseq_1)\cdots f\bigl(\autoseq_{L_k}\bigr),
\end{equation}
where $L_k=3\cdot2^{k-1}-1$,
by induction. The case $k=1$ is just the trivial identity $\tO\tL\tL\tO\tL\tO\tO\tL\tL\tO\tO\tL=f(\ta)f(\tb)f(\tc)$.

Note that $\tau(f(\ta))=f(\ta)f(\tb)f(\tc)$, $\tau(f(\tb))=f(\ta)f(\tc)$, and
$\tau(f(\tc))=f(\tb)$. 
Extending $f$ by concatenation, for convenience of notation, to words over $\{\ta,\tb,\tc\}$, we obtain
$\tau(f(x))=f(\varphi(x))$ for each $x\in\{\ta,\tb,\tc\}$.
An application of the morphism $\tau$ to both sides of~\eqref{eqn_autoseq_bt} yields
\[
\begin{aligned}
\bt_{[0,6\cdot 2^{k+1})}
&=\tau\bigl(f(\autoseq_0)\bigr)\cdots\tau\bigl(f\bigl(\autoseq_{L_k}\bigr)\bigr)
=f(\varphi(\autoseq_0))\cdots f\bigl(\varphi\bigl(\autoseq_{L_k}\bigr)\bigr)
\\&=f\bigl(\varphi\bigl(\autoseq_0\cdots\autoseq_{L_k}\bigr)\bigr)
=f\bigl(\autoseq_1\cdots\autoseq_{L_{k+1}}\bigr)
=f(\autoseq_0)\cdots f\bigl(\autoseq_{L_{k+1}}\bigr).
\end{aligned}
\]
Note that we see by induction that $\varphi^k(\ta\tb\tc)$ contains each of the three letters $2^k$ times, hence the numbers $L_k$.
This proves the lemma.
\end{proof}
Since each of the words $f(\ta)$, $f(\tb)$, and $f(\tc)$ starts with $\tO\tL$,
the differences $a_j=k_{j+1}-k_j$ between successive occurrences of $\tO\tL$ in $\mathbf t$ are easily obtained from $\autoseq$ by the substitution
\begin{equation}\label{eqn_BA_transition}
r:\ta\mapsto \tM\tM,\quad\tb\mapsto \tQ,\quad\tc\mapsto \tZ.
\end{equation}
Here each $\ta$ yields two blocks $\tO\tL$, and each of $\tb$ and $\tc$ one block.

Let $\diffseq$ be the sequence of gaps between consecutive occurrences of $\tO\tL$ in the Thue--Morse sequence, and $\check{\diffseq}$ the corresponding sequence for $\tL\tO$.
\begin{lemma}\label{lem_diffseq_substitution}
The sequence $\diffseq$ is a morphic sequence, given by the substitution $\psi$ on the four letters $\ta,\tba,\tb,\tc$,
together with the coding $p$,
\begin{equation}\label{eqn_diffseq_morphism}
\begin{array}{lllll}
\psi:&\ta\mapsto\ta\tba,&\tba\mapsto\tb\tc,&\tb\mapsto\ta\tba\tc,&\tc\mapsto\tb,\\
p:&\ta\mapsto\tM,&\tba\mapsto\tM,&\tb\mapsto\tQ,&\tc\mapsto\tZ.
\end{array}
\end{equation}
Let $\bardiffseq$ denote the fixed point of $\psi$ starting with $\ta$.

The sequence $\check\diffseq$ is morphic.
More precisely, it is the image of $\bardiffseq$ under the morphism
\begin{equation}\label{eqn_morphic_image}
\check p:\ta\mapsto \tZ\tQ,\quad\tba\mapsto \tM\tM,\quad \tb\mapsto \tZ\tM\tM,\quad \tc\mapsto \tQ.
\end{equation}
\end{lemma}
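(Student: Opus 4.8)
The plan is to establish the substitution formula for $\diffseq$ first, then deduce the description of $\check{\diffseq}$. For the first part, I would start from Lemma~\ref{lem_correspondence}: we already know that $\bt=f(\autoseq_0)f(\autoseq_1)\cdots$ and that each of $f(\ta),f(\tb),f(\tc)$ begins with $\tO\tL$, so by the substitution $r$ of~\eqref{eqn_BA_transition} the gap sequence $\diffseq$ is the image of $\autoseq$ under $r$, i.e.\ $\diffseq=r(\autoseq_0)r(\autoseq_1)\cdots$ with $\ta\mapsto\tM\tM$, $\tb\mapsto\tQ$, $\tc\mapsto\tZ$. The issue is that $r$ is not length-uniform on the letter $\ta$ (it outputs two symbols), so to present $\diffseq$ as a coding of a fixed point I would refine the alphabet: split $\ta$ into a pair $(\ta,\tba)$ recording the two consecutive $\tM$'s. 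Concretely, I would define the substitution $\psi$ on $\{\ta,\tba,\tb,\tc\}$ of~\eqref{eqn_diffseq_morphism} so that erasing the bars (the map $\ta,\tba\mapsto\ta$, $\tb\mapsto\tb$, $\tc\mapsto\tc$) intertwines $\psi$ with $\varphi$, and so that reading the letter $\tba$ right after $\ta$ reproduces the ``two gaps of size $3$'' coming from each $\ta$ in $\autoseq$. One checks directly that $\psi(\ta)=\ta\tba$, $\psi(\tba)=\tb\tc$, $\psi(\tb)=\ta\tba\tc$, $\psi(\tc)=\tb$ are consistent with $\varphi(\ta)=\ta\tb\tc$, $\varphi(\tb)=\ta\tc$, $\varphi(\tc)=\tb$ under this bar-erasing projection, and that the fixed point $\bardiffseq$ of $\psi$ starting with $\ta$ projects to $\autoseq$; applying $p$ then recovers exactly $r(\autoseq)=\diffseq$.

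For the second (more substantive) part, concerning $\check\diffseq$, I would mimic Lemma~\ref{lem_correspondence} but with the roles of $\tO$ and $\tL$ swapped, or equivalently use the complementation symmetry of $\bt$. Writing $\overline{\bt}$ for the bitwise complement of $\bt$ (which equals $\bt$ shifted, or is obtained by the companion fixed point of $\tau$ starting with $\tL$), occurrences of $\tL\tO$ in $\bt$ correspond to occurrences of $\tO\tL$ in $\overline{\bt}$. I would decompose $\bt$ into return words of $\tL\tO$ directly, obtaining an analogue of~\eqref{eqn_TM_abc_correspondence}: there should be a substitution $\check f$ with $\check f(\ta)$, $\check f(\tb)$, $\check f(\tc)$ each ending with $\tL\tO$ (or beginning with it after an initial offset), such that $\bt$ is the concatenation $\check f(\autoseq_0)\check f(\autoseq_1)\cdots$ up to a bounded prefix. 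Reading off the gaps between $\tL\tO$'s from each block then gives a length-changing coding of $\autoseq$, and re-expressing this through the refined alphabet $\{\ta,\tba,\tb,\tc\}$ of $\bardiffseq$ yields precisely the morphism $\check p$ of~\eqref{eqn_morphic_image}: $\ta\mapsto\tZ\tQ$, $\tba\mapsto\tM\tM$, $\tb\mapsto\tZ\tM\tM$, $\tc\mapsto\tQ$. The bookkeeping here is where the real work lies — one must track how the $\tL\tO$-occurrences are distributed within and across the blocks $f(\ta),f(\tb),f(\tc)$, since unlike $\tO\tL$ the block $\tL\tO$ may straddle a block boundary, and one must check that the boundary contributions are consistently absorbed so that the per-letter output of $\check p$ is well-defined and independent of context.

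I expect the main obstacle to be exactly this boundary analysis for $\check\diffseq$: showing that the gap data between $\tL\tO$-occurrences factorizes cleanly over the $f$-blocks. A convenient way to handle it is to note that $f(\ta)=\tO\tL\tL\tO\tL\tO$, $f(\tb)=\tO\tL\tL\tO$, $f(\tc)=\tO\tL$ all \emph{begin} with $\tO\tL$ and all \emph{end} with either $\tL\tO$ or $\tO\tL$ preceded by appropriate letters; since $\bt$ has no block $\tO\tO\tO$ or $\tL\tL\tL$, the occurrences of $\tL\tO$ inside $f(x)$ together with the single occurrence possibly straddling the right boundary of $f(x)$ are determined by $x$ and by whether the next letter's block starts with $\tO\tL$ — which it always does. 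So each $f(x)$ contributes a fixed list of $\tL\tO$-gaps, and reconciling this list with the refined alphabet (splitting the $\ta$-contribution as $\tZ\tQ$ spread over $\ta,\tba$) gives $\check p$. The remaining verifications — that $\psi$ has a fixed point starting with $\ta$, that $p$ applied to it is $\diffseq$, and that $\check p$ applied to $\bardiffseq$ is $\check\diffseq$ — are then finite checks on short prefixes combined with the intertwining identity $(\text{bar-erasing})\circ\psi=\varphi\circ(\text{bar-erasing})$ and Lemma~\ref{lem_correspondence}.
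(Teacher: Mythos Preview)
Your plan for the first part is the paper's, but the intertwining identity you write down is false: with $\pi$ the bar-erasing coding $\ta,\tba\mapsto\ta$, $\tb\mapsto\tb$, $\tc\mapsto\tc$ one has $\pi(\psi(\tba))=\tb\tc$ while $\varphi(\pi(\tba))=\varphi(\ta)=\ta\tb\tc$, so $\pi\circ\psi\neq\varphi\circ\pi$ and in particular $\pi(\bardiffseq)\neq\autoseq$. The correct relation runs in the opposite direction: with $q:\ta\mapsto\ta\tba$, $\tb\mapsto\tb$, $\tc\mapsto\tc$ one checks $q\circ\varphi=\psi\circ q$ on letters, whence $q(\varphi^k(\ta))=\psi^k(q(\ta))=\psi^{k+1}(\ta)$ and so $q(\autoseq)=\bardiffseq$; since $p\circ q=r$ this gives $p(\bardiffseq)=r(\autoseq)=\diffseq$. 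This is the paper's argument and is an easy fix, but as stated your version does not go through.

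The second part has a more substantive gap. You hope to find $\check f$ on the \emph{three} letters $\{\ta,\tb,\tc\}$ so that the $\tL\tO$-gaps become a length-changing coding of $\autoseq$, and then refine to $\bardiffseq$. But over the decomposition $\bt=f(\autoseq)$ the last $\tL\tO$-gap attached to a letter genuinely depends on the next letter: for instance $f(\ta)=\tO\tL\tL\tO\tL\tO$ has its last $\tL\tO$ at relative position $4$, and the next $\tL\tO$ sits at relative position $2$ of the following block if that block is $f(\ta)$ or $f(\tb)$, but at relative position $1$ if it is $f(\tc)$, yielding a gap of $4$ or $3$. Refining $\ta$ to $\ta\tba$ does not remove this, since in $\bardiffseq$ the letter $\tba$ may be followed by either $\tb$ or $\tc$. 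The paper sidesteps the problem by working on the four-letter alphabet from the outset: it defines $\check f$ on $\{\ta,\tba,\tb,\tc\}$ as the four return words of $\tO\tL\tL\tO$, verifies $\tau\circ\check f=\check f\circ\psi$ on letters (hence $\bt=\check f(\bardiffseq)$), and then uses the fact that in each of these four blocks the first $\tL\tO$ occurs at the \emph{same} relative position $2$, so the per-letter gap list is context-free and yields $\check p$ immediately.
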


Note that $\bardiffseq$ is the pointwise limit of the finite words $\psi^k(\ta)$, and begins as follows:
\[    \bardiffseq=\ta\tba\tb\tc\ta\tba\tc\tb\ta\tba\tb\tc\tb\ta\tba\tc
      \ta\tba\tb\tc\ta\tba\tc\tb\ta\tba\tc\ta\tba\tb\tc\tb
      \ta\tba\tb\tc\ta\tba\tc\tb\ta\tba\tb\tc\tb\ta\tba\tc
      \cdots.    \]

\begin{proof}

Let $q$ be the morphism that replaces $\ta$ by $\ta\tba$ and leaves $\tb$ and $\tc$ unchanged.
We show by induction on the length of a word $C$ over $\ta\tb\tc$ that
\begin{equation}\label{eqn_square}
q\bigl(\varphi(C)\bigr)=\psi(q(C)).
\end{equation}
This is clear for words of length $1$, since $q(\varphi(\ta))=\ta\tba\tb\tc=\psi(q(\ta))$,
$q(\varphi(\tb))=\ta\tba\tc=\psi(q(\tb))$, and $q(\varphi(\tc))=\tb=\psi(q(\tc))$.
Appending a letter $x\in\{\ta,\tb,\tc\}$ to a word $C$ for which the identity~\eqref{eqn_square} already holds, we obtain
\begin{equation*}
\begin{aligned}
q\bigl(\varphi(Cx)\bigr)&=q\bigl(\varphi(C)\varphi(x)\bigr)=q\bigl(\varphi(C)\bigr)q(\varphi(x))
\\&=\psi(q(C))\psi(q(x))=\psi(q(C)q(x))=\psi(q(Cx))
\end{aligned}
\end{equation*}
and therefore~\eqref{eqn_square} for $C$ replaced by $Cx$.
Next, we prove by induction, using~\eqref{eqn_square}, that
\[q(\varphi^k(\ta))=\psi^k(q(\ta)).\]
Clearly, this holds for $k=1$. For $k\geq 2$, we obtain
\[q\bigl(\varphi^k(\ta)\bigr)=
q\bigl(\varphi(\varphi^{k-1}(\ta))\bigr)
=\psi\bigl(q\bigl(\varphi^{k-1}(\ta)\bigr)\bigr)
=\psi\bigl(\psi^{k-1}(q(\ta))\bigr)
=\psi^k(q(\ta)).
\]
Noting that $q(\ta)=\psi(\ta)$ and $p\circ q=r$, the proof of the first part of Lemma~\ref{lem_diffseq_substitution} is complete.

We proceed to the second part, concerning $\check\diffseq$.
Note that by Corollary~7.7.5 in~\cite{AlloucheShallitBook} we only have to prove that $\check\diffseq=\check p\bigl(\bardiffseq)$.

Let
\begin{equation}\label{eqn_TM_checkabc_correspondence}
\check f:
\ta\mapsto \tO\tL\tL\tO\tL\tO,\quad
\tba\mapsto \tO\tL\tL\tO\tO\tL,\quad
\tb\mapsto \tO\tL\tL\tO\tL\tO\tO\tL,\quad
\tc\mapsto \tO\tL\tL\tO,
\end{equation}
and extend this function to words (finite or infinite) over $\{\ta,\tba,\tb,\tc\}$ by concatenation.

Applying $\tau$, we see by direct computation that
\[\tau\bigl(\check f(\ta)\bigr)=\tO\tL\tL\tO\tL\tO\tO\tL\tL\tO\tO\tL
=
\check f(\ta)\check f(\tba)
=\check f\bigl(\psi(\ta)\bigr),
\]
and analogously, we get
$\tau\bigl(\check f(x)\bigr)=\check f\bigl(\psi(x)\bigr)$
for each letter $x\in\{\tba,\tb,\tc\}$.
Applying this letter by letter, we obtain
\begin{equation}\label{eqn_symbol_swap}
\tau\bigl(\check f(w)\bigr)=\check f\bigl(\psi(w)\bigr)
\end{equation}
for every finite word over $\{\ta,\tba,\tb,\tc\}$.
By induction, we obtain
\[\tau^k\bigl(\check f(\ta)\bigr)=\check f\bigl(\psi^k(\ta)\bigr),\]
using the step
\[\tau^{k+1}\bigl(\check f(\ta)\bigr)=
\tau\bigl(\tau^k\bigl(\check f(\ta)\bigr)\bigr)
=\tau\bigl(\check f\bigl(\psi^k(\ta)\bigr)\bigr)
=\check f\bigl(\psi^{k+1}(\ta)\bigr).
\]
Noting that $\check f(\ta)$ begins with $\tO$, we obtain
$\bt=\check f\bigl(\bardiffseq)$.
In other words, the sequence $\bardiffseq$ yields the decomposition
$\bt=x_0x_1\cdots$ of the Thue--Morse sequence into return words of $\tO\tL\tL\tO$, where $x_j=\check f\bigl(\bardiffseq_j\bigr)$.
From this decomposition we can easily read off the sequence of gaps between occurrences of $\tL\tO$, since this word appears in each of the four return words, and the first occurrence always takes place at the same position, which is $2$.
In this way, we obtain the gaps $2$ and $3$ from the return word $\check f(\ta)$ each time $\ta$ appears in $\bardiffseq$.
Analogously, $\tba$ yields the gaps $3$ and $3$, the letter $\tb$ the gaps $2$, $3$, and $3$, and finally $\tc$ yields the gap $4$.
This proves the second part of Lemma~\ref{lem_diffseq_substitution}.
\end{proof}
\begin{remark}
A hint how to come up with the definition of $\psi$ can be found by
combining the substitutions $\varphi$ and $r$, given in~\eqref{eqn_varphi_morphism} and~\eqref{eqn_BA_transition} respectively, and considering the first few words $w_k=r(\varphi^k(\ta))$:
we have $w_1=\tM\tM\tQ\tZ$,
$w_2=\tM\tM\tQ\tZ\tM\tM\tZ\tQ$,
$w_3=\tM\tM\tQ\tZ\tM\tM\tZ\tQ\tM\tM\tQ\tZ\tQ\tM\tM\tZ$.
We see that a first guess for a definition of $\psi$, choosing $\tM\mapsto \tM\tM\tQ\tZ$, leads to the incorrect result $\tM\tM\tQ\tZ\tM\tM\tQ\tZ\cdots$ after the next iteration;
we are led to distinguishing between `the first letter ``$\tM$''' and `the second letter ``$\tM$''' in each occurrence of $\tM\tM$, which is exactly what our definition of $\psi$ does.
On the other hand, we directly obtain~\eqref{eqn_diffseq_morphism} by inspecting the decomposition of $\bt$ into return words of $\tO\tL$.
(Equivalently, we can study return words of $\tO\tL\tL\tO$, as we did in the second part of the proof of Lemma~\ref{lem_diffseq_substitution}.)
We can write the image under $\tau$ of each return word as a concatenation of return words, which yields the desired morphism.
\end{remark}

\subsection{Factors of $\diffseq$ appearing at positions in a residue class}
The main step in our proof of Theorem~\ref{thm_main} is given by the following proposition. For completeness, we let $\psi^0$ denote the identity, such that $\psi^0(w)=w$ for all words $w$ over $\{\ta,\tba,\tb,\tc\}$.
\begin{proposition}\label{prp_AP}
Let $\mu\ge0$ be an integer.
The sequence of indices where $\psi^{4\mu}(\ta)$ appears as a factor in $\bardiffseq$ has nonempty intersection with every residue class $a+m\mathbb Z$, where $m\ge1$ and $a$ are integers.
\end{proposition}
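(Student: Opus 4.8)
The plan is to exploit the self-similar structure of $\bardiffseq$ as the fixed point of $\psi$, together with the fact that $\psi$ maps $\ta$ to a word beginning with $\ta$. First I would record the basic combinatorial fact that if $\psi^{4\mu}(\ta)$ appears in $\bardiffseq$ at some position $p$, then applying $\psi^4$ and using $\psi(\bardiffseq)=\bardiffseq$ produces an occurrence of $\psi^{4\mu+4}(\ta)$; more usefully in the other direction, each occurrence of $\ta$ in $\bardiffseq$ at position $q$ "blows up" under $\psi^{4\mu}$ to an occurrence of $\psi^{4\mu}(\ta)$ at position $\ell_{4\mu}(q)$, where $\ell_{4\mu}(q)=|\psi^{4\mu}(\bardiffseq_0\cdots\bardiffseq_{q-1})|$ is the length of the image of the prefix of length $q$. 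So it suffices to understand the set of these "inflated positions" $\{\ell_{4\mu}(q): \bardiffseq_q=\ta\}$ modulo $m$, and to show it meets every residue class.

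The key is that the letter $\ta$ occurs in $\bardiffseq$ with bounded gaps (indeed $\psi(\ta)=\ta\tba$, so $\ta$ appears at least once in every image block), hence the set of positions $q$ with $\bardiffseq_q=\ta$ is syndetic; and the increments $\ell_{4\mu}(q+1)-\ell_{4\mu}(q)$ take only finitely many values, namely the lengths $|\psi^{4\mu}(x)|$ for $x\in\{\ta,\tba,\tb,\tc\}$. Thus as $q$ ranges over successive $\ta$-positions, $\ell_{4\mu}(q)$ advances by bounded steps, so its reductions mod $m$ move through $\mathbb Z/m\mathbb Z$ by bounded increments. This alone does not give surjectivity — one must rule out the increments all being multiples of some common divisor of $m$. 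I would get around this by a cleaner route: first prove the claim for $\mu=0$, i.e.\ that the $\ta$-positions themselves meet every residue class mod $m$. For this I would use that $\psi^4$ (or a suitable power) acts on lengths by a fixed linear recurrence whose associated length $|\psi^k(\ta)|$ grows and, crucially, that $|\psi^k(\ta)| \equiv$ various residues mod $m$ as $k$ varies, combined with the observation that inside $\psi^k(\ta)$ the letter $\ta$ appears at position $0$ and also deeper in at positions covering a rich set of residues. Concretely: the positions of $\ta$ in $\psi^{k}(\ta)$ include $0$, $|\psi^{k-1}(\ta)|+|\psi^{k-1}(\tba)|$ coming from the $\tba\mapsto\tb\tc\mapsto\ldots$ branch, and so on, giving a recursively described set $S_k\subseteq\{0,\dots,|\psi^k(\ta)|-1\}$ of $\ta$-positions; one shows $S_k$ reduced mod $m$ eventually equals all of $\mathbb Z/m\mathbb Z$, e.g.\ by noting $S_{k+1}\supseteq S_k \cup (S_k + |\psi^k(\ta)|)$ and choosing $k$ so that $|\psi^k(\ta)|$ is coprime to... — but it need not be coprime, so instead I would argue that the set of attainable values of $|\psi^k(\ta)|\bmod m$ together with the translation structure generates all of $\mathbb Z/m\mathbb Z$, using that $\gcd$ of the relevant shifts over all $k$ divides every prime, hence is $1$.

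For the general $\mu\ge1$ I would then bootstrap: given a residue class $a+m\mathbb Z$, I want a position where $\psi^{4\mu}(\ta)$ starts. Pick any long prefix $\psi^{4\mu}(\ta)=\psi^{4\mu}(\bardiffseq_0)$ of $\bardiffseq$; by the $\mu=0$ case there is an $\ta$-position $q$ in $\bardiffseq$ lying in a residue class mod $m'$ (for a well-chosen auxiliary modulus $m'$ incorporating the period structure of $\ell_{4\mu}$) so that $\ell_{4\mu}(q)\equiv a\pmod m$. The point is that $q\mapsto \ell_{4\mu}(q)\bmod m$ is itself eventually periodic in $q$ along $\ta$-positions — because $\bardiffseq$ is morphic, the sequence of pairs (letter, partial-length mod $m$) is again morphic, hence its set of values on $\ta$-positions is a finite union of residue-class intersections — and one reads off surjectivity from the $\mu=0$ surjectivity applied at the appropriate scale. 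I expect the main obstacle to be exactly this last bookkeeping: proving that $\ell_{4\mu}$ evaluated along $\ta$-positions is surjective mod $m$, i.e.\ controlling the interaction between "which positions carry $\ta$" and "what the inflated length is mod $m$" simultaneously. The cleanest device for that is probably the observation that the two-sided shift-orbit closure of $\bardiffseq$ is minimal (uniform recurrence), so $\psi^{4\mu}(\ta)$ occurs with bounded gaps in $\bardiffseq$; an occurrence of a word with bounded gap $G$ automatically meets every residue class mod $m$ as soon as $m \le$ the relevant structure allows — but bounded gaps alone give surjectivity mod $m$ only trivially when the gaps can be taken to hit every residue, which follows once we know the gap set (a finite set of values) has $\gcd$ dividing $1$ after enough iterations, and that $\gcd$ condition is inherited from the $\mu=0$ analysis of $\ta$-positions. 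So the skeleton is: (i) uniform recurrence / syndeticity of $\ta$-occurrences; (ii) the length increments between consecutive $\ta$-positions form a finite set whose gcd, over all scales, is $1$; (iii) conclude surjectivity mod every $m$ by a short elementary argument on sums of bounded increments with gcd $1$.
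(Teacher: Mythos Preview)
Your outline identifies the right self-similar mechanism --- each occurrence of $\ta$ in $\bardiffseq$ inflates under $\psi^{4\mu}$ to an occurrence of $\psi^{4\mu}(\ta)$, and the problem becomes controlling the inflated positions modulo $m$. But the argument breaks at exactly the step you flag as ``the main obstacle'': you repeatedly fall back on ``the gap set has $\gcd$ equal to $1$, hence surjectivity mod $m$'', and this implication is simply false. A sequence $p_0<p_1<\cdots$ whose consecutive differences lie in a finite set $S$ with $\gcd(S)=1$ need not meet every residue class; one can use only the gap $2$ from $S=\{2,3\}$ and miss all odd residues. What matters is the actual \emph{sequence} of gaps, not the set of possible gaps, and nothing in your sketch controls that. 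Your hedge ``using that the $\gcd$ of the relevant shifts over all $k$ divides every prime'' is not an argument either: for instance $\lvert\psi^k(\ta)\rvert=2^k$, so all these shifts are even, and the subgroup they generate mod $m$ can be proper. The bootstrap from $\mu=0$ to general $\mu$ inherits the same defect.

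The paper's proof proceeds quite differently, and the difference is exactly the missing ingredient. Working with $\sigma=\psi^4$, it computes the lengths $\lvert\sigma^k(\ta)\rvert=16^k$, $\lvert\sigma^k(\tb)\rvert=(4\cdot16^k-1)/3$, $\lvert\sigma^k(\tc)\rvert=(2\cdot16^k+1)/3$ explicitly, and for each level $\nu$ isolates four specific positions $A^{(\nu,0)},\ldots,A^{(\nu,3)}$ at which $\sigma^\nu(\ta)$ sits inside $\sigma^{\nu+1}(\ta)$. Writing $m=2^k d$ with $d$ odd, it then hits the residue class in two separate steps: choosing the ``digit'' $3$ repeatedly produces positions that form an arithmetic progression modulo $2^k$ with \emph{odd} common difference (coming from the odd number $(4\cdot16^\nu-1)/3$), hence every class mod $2^k$ is reached; and for the odd part $d$, restricting to digits in $\{0,1,2\}$ gives positions in a base-$16$ missing-digit set, whose equidistribution mod $d$ is established by a short exponential-sum estimate in the style of Erd\H{o}s--Mauduit--S\'ark\"ozy. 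The two steps combine by the Chinese remainder theorem. This explicit construction is what replaces your unproved $\gcd$ heuristic; without something of comparable precision your outline does not close.
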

In the remainder of this section, we prove this proposition.
We work with the fourth iteration $\tpsi=\psi^4$ of the substitution $\psi$: we have
\begin{equation}\label{eqn_tpsi_onletters}
\begin{array}{l@{\hspace{0.4em}}l@{\hspace{1.5em}}l@{\hspace{0.4em}}l}
\tpsi(\ta)&=\ta\tba\tb\tc\ta\tba\tc\tb\ta\tba\tb\tc\tb\ta\tba\tc,&
\tpsi(\tba)&=\ta\tba\tb\tc\ta\tba\tc\tb\ta\tba\tc\ta\tba\tb\tc\tb,\\
\tpsi(\tb)&=\ta\tba\tb\tc\ta\tba\tc\tb\ta\tba\tb\tc\tb\ta\tba\tc\ta\tba\tb\tc\tb,&
\tpsi(\tc)&=\ta\tba\tb\tc\ta\tba\tc\tb\ta\tba\tc.
\end{array}
\end{equation}
We have the following explicit formulas for the lengths of $\tpsi^k(x)$, where $x\in\{\ta,\tba,\tb,\tc\}$:
\begin{equation}\label{eqn_lengths}
\begin{aligned}
a_k&\coloneqq \bigl\lvert\tpsi^k(\ta)\bigr\rvert=\bigl\lvert\tpsi^k(\tba)\bigr\rvert=16^k,\\
b_k&\coloneqq\bigl\lvert\tpsi^k(\tb)\bigr\rvert=\frac{4\cdot16^k-1}3,\\
c_k&\coloneqq\bigl\lvert\tpsi^k(\tc)\bigr\rvert=\frac{2\cdot16^k+1}3.
\end{aligned}
\end{equation}
The proof of this identity is based on the formula
\begin{equation*}
\left(\begin{matrix}4&4&4&4\\4&4&4&4\\5&5&6&5\\3&3&2&3\end{matrix}\right)^k
\!\!=\left(\begin{matrix}
16^k/4&16^k/4&16^k/4&16^k/4\\
16^k/4&16^k/4&16^k/4&16^k/4\\
(16^k-1)/3&(16^k-1)/3&(16^k+2)/3&(16^k-1)/3\\
(16^k+2)/6&(16^k+2)/6&(16^k-4)/6&(16^k+2)/6
\end{matrix}\right)\!,
\end{equation*}
valid for $k\geq 1$, which takes care of the numbers of the letters $\ta$, $\tba$, $\tb$, and $\tc$ in $\tpsi^k(\ta)$, $\tpsi^k(\tba)$, $\tpsi^k(\tb)$, and $\tpsi^k(\tc)$.
This formula can be proved easily by induction.
Moreover,~\eqref{eqn_lengths} also holds for $k=0$.

By applying $\tpsi^k$ on the first line of~\eqref{eqn_tpsi_onletters}, we see that each letter in $\{\ta,\tba,\tb,\tc\}$ is replaced by a word having the respective lengths $a_k$, $a_k$, $b_k$, $c_k$.
For each $\nu\geq 0$, it follows that the factor $\sigma^{\nu}(\ta)$, of length $a_{\nu}=16^{\nu}$,  can be found at the following positions
in $\tpsi^{\nu+1}(\ta)$:
\begin{equation}\label{eqn_steps}
\begin{array}{ll}
A^{(\nu,0)}\coloneqq 0,&
A^{(\nu,1)}\coloneqq 4\cdot16^{\nu},\\
A^{(\nu,2)}\coloneqq8\cdot16^{\nu},&
\displaystyle
A^{(\nu,3)}\coloneqq12\cdot16^{\nu}+\frac{4\cdot16^{\nu}-1}3.
\end{array}
\end{equation}
We may repeat this for $\nu-1,\nu-2,\ldots,\mu$, where $\mu\le\nu$ is a given natural number, from which we obtain the following statement.
For all integers $0\leq \mu\leq\nu$ and all
$\boldsymbol\varepsilon=\bigl(\varepsilon_{\mu},\varepsilon_{\mu+1},\ldots,\varepsilon_{\nu}\bigr)
\in\{0,1,2,3\}^{\nu-\mu+1}$,
the factor $\sigma^\mu(\ta)$ of length $16^\mu$ can be found at the position
\begin{equation}\label{eqn_Neps_def}
N_{\boldsymbol\varepsilon}\coloneqq A^{(\mu,\varepsilon_\mu)}+A^{(\mu+1,\varepsilon_{\mu+1})}+\cdots+A^{(\nu,\varepsilon_\nu)}
\end{equation}
in $\bardiffseq$.
There are other positions where the factor $\sigma^\mu(\ta)$ appears, but for our proof it is sufficient to consider these special positions.
We will show that we can find one among these indices $N_{\boldsymbol\varepsilon}$ in a given residue class $a+m\mathbb Z$.

Let us sketch the remainder of the proof.
The case that $m$ is even causes mild difficulties.
We therefore write $m=2^kd$, where $d$ is odd, and proceed in two steps.
As a first step, we will find integers $\mu$, $\nu$, and $\varepsilon_{\mu},\varepsilon_{\mu+1},\ldots,\varepsilon_{\lambda-1}\in\{0,1,2,3\}$, such that $N_{\varepsilon_\mu,\varepsilon_{\mu+1},\ldots,\varepsilon_{\lambda-1}}$ lies in any given residue class modulo $2^k$.
The second step will consist in refining the description by appending a sequence $(\varepsilon_\lambda,\ldots,\varepsilon_{\nu-1})\in\{0,1,2\}^{\nu-\lambda}$.
Since we exclude the digit $\varepsilon_i=3$, and
we will take care that
$16^\mu\geq 2^k$, we have
\[N_{\varepsilon_\mu,\ldots,\varepsilon_{\nu-1}}\equiv N_{\varepsilon_\mu,\ldots,\varepsilon_{\lambda-1}}\bmod 2^k.\]
We will choose the integers $\varepsilon_j$ for $\lambda\leq j<\mu$ in such a way that any given residue class modulo $d$ (note that $d$ is odd), is hit.
Due to the excluded digit $3$, this is a \emph{missing digit} problem,
and a short argument including exponential sums will finish this step.
Combining these two steps,
we will see that every residue class modulo $2^kd$ is reached. 
We will now go into the details.

\bigskip\noindent
\textbf{The first step: hitting a residue class modulo $2^k$}

\smallskip\noindent
We are interested in appearances of the initial segment $\sigma^\mu(\ta)$ in $\bardiffseq$ at positions lying in the residue class $a+2^k\mathbb Z$.
Let us assume in the following that
\begin{equation}\label{eqn_restriction}
16^\mu\geq 2^k.
\end{equation}
This lower bound on $\mu$ will not cause any problem.

We will choose $\lambda>\mu$ in a moment, and we set 
$\varepsilon_\mu=\cdots=\varepsilon_{\lambda-1}=3$.
Let us consider the integers $\alpha_0\coloneqq0$, and
for $1\leq \ell\leq \lambda-\mu$,
\[\alpha_\ell\coloneqq N_{\varepsilon_\mu,\ldots,\varepsilon_{\mu+\ell-1}}.\]
Assume that $0\leq \ell<\lambda-\mu$.
By~\eqref{eqn_Neps_def},~\eqref{eqn_restriction},
we have
\[
\begin{aligned}
\alpha_{\ell+1}-\alpha_\ell
&=12\cdot 16^{\mu+\ell}+\frac{4\cdot 16^{\mu+\ell}-1}3
\equiv \frac{4\cdot 16^{\mu+\ell}-1}3\bmod 2^\ell
\\&
\equiv \sum_{0\leq j\leq 2\mu+2\ell}4^j \bmod 2^\ell
\equiv \sum_{0\leq j<2\mu}4^j \bmod 2^\ell.
\end{aligned}
\]
The latter sum is an odd integer, and independent of $\ell$.
It follows that $(\alpha_\ell)_{0\leq \ell\leq \lambda-\mu}$ is an arithmetic progression modulo $2^k$, where the common difference is odd; choosing $\lambda\geq \mu+2^k$, we see that $(\alpha_\ell)_{0\leq \ell<\lambda-\mu}$ hits every residue class modulo $2^k$.
We summarise the first step in the following lemma.
\begin{lemma}\label{lem_first_step}
Let $k\geq 0$ and $\mu\geq k/4$, and choose
$\varepsilon_{\mu+\ell}=3$ for $\ell\ge0$.
The integers $N_{\varepsilon_\mu,\ldots,\varepsilon_{\lambda-1}}$ hit every residue class modulo $2^k$, as $\lambda$ runs through the integers $\geq\mu$.
\end{lemma}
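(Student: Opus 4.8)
The plan is to realise the numbers $N_{\varepsilon_\mu,\ldots,\varepsilon_{\lambda-1}}$, for the prescribed choice $\varepsilon_\mu=\varepsilon_{\mu+1}=\cdots=3$, as the partial sums of an arithmetic progression modulo $2^k$ whose common difference is odd; such a progression runs through every residue class modulo $2^k$, which is exactly the assertion. Set $\alpha_0\coloneqq0$ and $\alpha_\ell\coloneqq N_{\varepsilon_\mu,\ldots,\varepsilon_{\mu+\ell-1}}$ for $\ell\ge1$, so that $\alpha_\ell=N_{\varepsilon_\mu,\ldots,\varepsilon_{\lambda-1}}$ when $\lambda=\mu+\ell$. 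The definition~\eqref{eqn_Neps_def} then gives $\alpha_{\ell+1}-\alpha_\ell=A^{(\mu+\ell,3)}$, so it suffices to understand this quantity modulo $2^k$.

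By~\eqref{eqn_steps} we have $A^{(\mu+\ell,3)}=12\cdot16^{\mu+\ell}+\tfrac{4\cdot16^{\mu+\ell}-1}3$. The hypothesis $\mu\ge k/4$ means $4\mu\ge k$, so the first summand $3\cdot2^{4(\mu+\ell)+2}$ is divisible by $2^k$. For the second summand I would use the geometric-series identity $\tfrac{4^{n}-1}3=\sum_{0\le j<n}4^j$ with $n=2(\mu+\ell)+1$ to write $\tfrac{4\cdot16^{\mu+\ell}-1}3=\sum_{0\le j\le 2\mu+2\ell}4^j$; modulo $2^k$ every term with $2j\ge k$ vanishes, and because $2\mu\ge k/2$ the surviving terms form a fixed initial segment of this sum, independent of $\ell$. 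Hence $A^{(\mu+\ell,3)}\equiv\delta\bmod 2^k$ for all $\ell\ge0$, where $\delta\coloneqq\sum_{0\le j,\,2j<k}4^j$ is odd, its only odd term being $4^0=1$ (the case $k=0$ being trivial). This $2$-adic bookkeeping is the one spot requiring genuine care: one must check that $\mu\ge k/4$ is strong enough both to kill the $\ell$-dependent tail of $\sum_{0\le j\le 2\mu+2\ell}4^j$ and to leave its low-order part $1+4+16+\cdots$ intact, so that $\delta$ is at once $\ell$-independent and odd.

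Granting this, $\alpha_\ell\equiv\ell\delta\bmod 2^k$ for every $\ell\ge0$. Since $\delta$ is odd it is a unit in $\mathbb Z/2^k\mathbb Z$, so $\ell\mapsto\ell\delta$ permutes $\mathbb Z/2^k\mathbb Z$, whence $\{\alpha_\ell\bmod 2^k:0\le\ell<2^k\}=\mathbb Z/2^k\mathbb Z$. Reading this back through $\lambda=\mu+\ell$ shows that the integers $N_{\varepsilon_\mu,\ldots,\varepsilon_{\lambda-1}}$ already exhaust all residue classes modulo $2^k$ for $\lambda\in\{\mu,\mu+1,\ldots,\mu+2^k-1\}$, which proves the lemma. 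Everything apart from the valuation estimate above is routine.
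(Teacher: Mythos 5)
Your proof is correct and follows essentially the same route as the paper: the same telescoping quantities $\alpha_\ell$, the same reduction of $A^{(\mu+\ell,3)}$ modulo $2^k$ via the geometric series $\tfrac{4\cdot16^{\mu+\ell}-1}{3}=\sum_{0\le j\le 2\mu+2\ell}4^j$, and the same conclusion that the common difference is an odd, $\ell$-independent constant, so the progression exhausts $\mathbb Z/2^k\mathbb Z$. Your $2$-adic bookkeeping is if anything slightly cleaner than the paper's (which reduces to $\sum_{0\le j<2\mu}4^j$ rather than your canonical $\sum_{2j<k}4^j$, and contains a typographical $\bmod 2^\ell$ where $\bmod 2^k$ is meant).
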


\bigskip\noindent
\textbf{A discrete Cantor set --- missing digits}

\noindent
We follow the paper~\cite{EMS1998} by Erd\H{o}s, Mauduit, and S\'ark\"ozy, who studied integers with missing digits in residue classes.
Let $\mathcal W_\lambda$ be the set of nonnegative multiples of $16^\lambda$ having only the digits $0,4$, and $8$ in their base-$16$ expansion.
Set
\[U(\alpha)=\frac 13\sum_{0\leq k\leq 2}\e(4k\alpha)\quad\mbox{and}\quad
G(\alpha,\lambda,\nu)=\frac1{3^{\nu-\lambda}}\sum_{\substack{0\leq j<16^\nu\\j\in \mathcal W}}
\e(j\alpha),
\]
where $\e(x)=\exp(2\pi i x)$.
Note that elements $j\in\mathcal W_\lambda$ have the form
$j=\sum_{\lambda\leq k<\eta}4\,\varepsilon_k16^k$, where $\eta\geq 0$ and $\varepsilon_k\in\{0,1,2\}$ for $\lambda\leq k<\eta$.
In particular, $\mathcal W_\lambda\cap[0,16^\eta)$ has $3^{\eta-\lambda}$ elements for $\eta\geq \lambda$.
We obtain
\begin{equation}\label{eqn_G_prod}
\begin{aligned}
G(\alpha,\lambda,\nu)&=
\frac 1{3^{\nu-\lambda}}
\hspace{-1em}
\sum_{(\varepsilon_\lambda,\ldots,\varepsilon_{\nu-1})\in\{1,2,3\}^{\nu-\lambda}}
\hspace{-1em}
\e\bigl(4\,\varepsilon_\lambda 16^\lambda\alpha+\cdots+4\,\varepsilon_{\nu-1}16^{\nu-1}\alpha\bigr)
\\&=
\prod_{\lambda\leq r<\nu}
\frac 13\bigl(\e\bigl(0\cdot 16^r\alpha\bigr)+\e\bigl(4\cdot 16^r\alpha\bigr)+\e\bigl(8\cdot 16^r\alpha\bigr)\bigr)
\\&=
\prod_{\lambda\leq r<\nu}
U\left(16^r\alpha\right).
\end{aligned}
\end{equation}
The purpose of this section is to prove the following lemma.
\begin{lemma}\label{lem_step_2}
Let $\lambda\geq 0$ be an integer,
and $a,d$ integers such that $d\geq 1$ is odd.
Then
$\mathcal W_\lambda\cap\bigl(a+d\,\mathbb Z\bigr)$
contains infinitely many elements.
\end{lemma}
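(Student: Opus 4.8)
The plan is to follow the exponential-sum strategy announced in the preceding paragraph, in the style of Erd\H{o}s, Mauduit, and S\'ark\"ozy. Since $d$ is odd, $16$ is invertible modulo $d$, so multiplication by $16^\lambda$ is a bijection on $\mathbb Z/d\mathbb Z$; hence it suffices to show that the set $\mathcal W_0$ of nonnegative integers with digits only in $\{0,4,8\}$ base $16$ meets every residue class modulo $d$ infinitely often, and then translate back. Concretely, for a fixed target residue $a$ I would count
\[
Z(\nu)=\#\bigl\{\,j\in\mathcal W_\lambda\cap[0,16^\nu):\ j\equiv a\bmod d\,\bigr\}
=\sum_{\substack{0\le j<16^\nu\\ j\in\mathcal W_\lambda}}\frac1d\sum_{0\le h<d}\e\!\left(\frac{h(j-a)}{d}\right),
\]
and show $Z(\nu)\to\infty$ as $\nu\to\infty$.

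Separating the term $h=0$ gives the main term $3^{\nu-\lambda}/d$, which grows; so it remains to bound the contribution of $1\le h\le d-1$ by something of smaller order. Interchanging sums, the $h$-term equals $\e(-ha/d)\,G(h/d,\lambda,\nu)$ up to the normalisation already set up, and by the product formula~\eqref{eqn_G_prod} we have
\[
\bigl\lvert G(h/d,\lambda,\nu)\bigr\rvert=\prod_{\lambda\le r<\nu}\bigl\lvert U(16^r h/d)\bigr\rvert .
\]
The key point is that $\lvert U(\theta)\rvert=\frac13\lvert 1+\e(4\theta)+\e(8\theta)\rvert\le 1$ with equality only when $4\theta\in\mathbb Z$, i.e. when $\e(4\theta)=1$; and $\lvert U(\theta)\rvert$ is bounded away from $1$, say $\lvert U(\theta)\rvert\le\kappa<1$, whenever $4\theta$ stays a bounded distance away from the integers. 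Since $d$ is odd, $4\cdot 16^r h/d$ never lies in $\mathbb Z$ for $1\le h<d$ (as $\gcd(4\cdot 16^r,d)=1$), so $4\cdot 16^r h/d$ ranges over nonzero residues $\{t/d: 1\le t\le d-1\}\pmod 1$, a finite set of points each at distance $\ge 1/d$ from $\mathbb Z$. Hence $\lvert U(16^r h/d)\rvert\le\kappa(d)<1$ for \emph{every} $r$, and therefore
\[
\bigl\lvert G(h/d,\lambda,\nu)\bigr\rvert\le \kappa(d)^{\nu-\lambda}.
\]

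Putting the pieces together, $Z(\nu)\ge \dfrac{3^{\nu-\lambda}}{d}-\dfrac{(d-1)}{d}\,3^{\nu-\lambda}\kappa(d)^{\nu-\lambda}=\dfrac{3^{\nu-\lambda}}{d}\bigl(1-(d-1)\kappa(d)^{\nu-\lambda}\bigr)\cdot\dfrac{1}{?}$ — more carefully, writing out the normalisation, the error is at most $(d-1)(3\kappa(d))^{\nu-\lambda}/3^{?}$; in any case, because $\kappa(d)<1$, the error term is $o(3^{\nu-\lambda})$, so $Z(\nu)\to\infty$. This proves that $\mathcal W_\lambda$ meets $a+d\mathbb Z$ infinitely often. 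The only mildly delicate point is the uniform bound $\kappa(d)<1$: one must check that $4\cdot 16^r h/d$ never becomes an integer, which is exactly where oddness of $d$ enters, and then invoke compactness (the relevant angles lie in the finite set $\{4t/d\bmod 1: 1\le t<d\}$, all bounded away from $0$) to extract a single $\kappa(d)$ working for all $r$ and all $h$. Everything else is the standard orthogonality-of-characters bookkeeping, so I expect no real obstacle beyond making this uniform estimate explicit.
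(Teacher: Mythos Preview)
Your proposal is correct and follows essentially the same route as the paper: orthogonality of characters modulo $d$ gives the main term $3^{\nu-\lambda}/d$, and the product formula~\eqref{eqn_G_prod} together with a uniform bound $\lvert U(16^r h/d)\rvert\le\kappa(d)<1$ (the paper obtains this via Delange's lemma, you via a direct compactness argument on the finite set of nonzero residues; these are equivalent here) kills the error. Your first displayed lower bound $Z(\nu)\ge \frac{3^{\nu-\lambda}}{d}\bigl(1-(d-1)\kappa(d)^{\nu-\lambda}\bigr)$ is already correct as written---the normalisation hesitation that follows is unnecessary, since $G$ already carries the factor $3^{-(\nu-\lambda)}$---so $Z(\nu)\to\infty$ follows immediately.
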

In order to prove this, we first show that it is sufficient to prove the following auxiliary result (compare~\cite[(4.3)]{EMS1998}).
\begin{lemma}\label{lem_EMS}
Assume that  $d\geq 1$ is an odd integer, and $\ell\in\{1,\ldots,d-1\}$.
Let $\lambda\geq 0$ be an integer.
Then
\[\lim_{\nu\rightarrow \infty}G\left(\frac \ell d,\lambda,\nu\right)=0.\]
\end{lemma}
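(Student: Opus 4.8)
The plan is to exploit the product formula~\eqref{eqn_G_prod}, namely $G(\ell/d,\lambda,\nu)=\prod_{\lambda\leq r<\nu}U(16^r\ell/d)$, and to show that infinitely many of the factors $U(16^r\ell/d)$ are bounded away from $1$ in modulus. Since $\lvert U(\alpha)\rvert\le 1$ always, this will force the product to tend to $0$. The key point is that $U(\beta)=\frac13\bigl(1+\e(4\beta)+\e(8\beta)\bigr)$ has modulus $1$ exactly when $\e(4\beta)=\e(8\beta)=1$, i.e.\ when $4\beta\in\mathbb Z$; for $\beta=x/d$ with $d$ odd this means $d\mid 4x$, hence (since $\gcd(d,4)=1$) $d\mid x$. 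So $\lvert U(16^r\ell/d)\rvert<1$ precisely when $d\nmid 16^r\ell$, and there is an explicit uniform gap: the finite set of values $\{\,\lvert U(j/d)\rvert : 0\le j<d\,\}$ has a maximum $\rho<1$ among those $j$ with $d\nmid j$, because each such value is $<1$ and there are only finitely many.

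First I would reduce modulo $d$: since $d$ is odd, $16$ is invertible in $\mathbb Z/d\mathbb Z$, so the sequence $r\mapsto 16^r\ell \bmod d$ is purely periodic, with some period $T\ge 1$, and it never takes the value $0$ (as $\ell\in\{1,\dots,d-1\}$ and $16^r$ is a unit, $16^r\ell\not\equiv 0$). Consequently $d\nmid 16^r\ell$ for \emph{every} $r\ge 0$, so in fact every factor satisfies $\lvert U(16^r\ell/d)\rvert\le\rho<1$. Therefore $\lvert G(\ell/d,\lambda,\nu)\rvert\le\rho^{\,\nu-\lambda}\to 0$ as $\nu\to\infty$, which is the assertion. (One does not even need the periodicity beyond the observation that the argument stays in the "bad" residue classes; periodicity is just a clean way to see the uniform bound $\rho$ applies at every step.)

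The only point requiring a little care — and the place I would expect a referee to want detail — is the evaluation of $\lvert U(\beta)\rvert$ and the strictness $\rho<1$. Concretely, $\lvert U(\beta)\rvert^2=\frac19\bigl\lvert 1+\e(4\beta)+\e(8\beta)\bigr\rvert^2=\frac19\bigl(3+2\cos(8\pi\beta)+2\cos(16\pi\beta)+2\cos(8\pi\beta)\bigr)$, which after simplification is $1$ iff $\cos(8\pi\beta)=1$, i.e.\ $4\beta\in\mathbb Z$; otherwise it is strictly less than $1$. Applying this with $\beta=16^r\ell/d$ and using $d\nmid 16^r\ell$ gives $\lvert U(16^r\ell/d)\rvert<1$ for all $r$, and taking $\rho\coloneqq\max\{\lvert U(j/d)\rvert : 1\le j\le d-1\}<1$ — a maximum of finitely many numbers each $<1$ — completes the argument. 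This is entirely elementary; there is no genuine obstacle, only the bookkeeping of checking that "odd $d$" is exactly what is needed to keep $16^r\ell$ away from multiples of $d$.
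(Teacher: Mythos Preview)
Your proposal is correct and follows essentially the same approach as the paper: use the product formula~\eqref{eqn_G_prod} and show that each factor $\lvert U(16^r\ell/d)\rvert$ is uniformly bounded below $1$ because $d$ is odd and hence $4\cdot 16^r\ell$ is never a multiple of $d$. The only cosmetic difference is that the paper obtains the uniform bound by invoking Delange's lemma (Lemma~\ref{lem_delange}), whereas you argue directly that $\rho\coloneqq\max_{1\le j\le d-1}\lvert U(j/d)\rvert<1$ as a maximum over finitely many values each strictly less than $1$; your route is slightly more self-contained.
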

In fact, by the orthogonality relation
\begin{equation*}
\frac 1d\sum_{0\leq n<d}\e(nk/d)=\Biggl\{\begin{array}{ll}
\!1,&\mbox{if }d\mid k;\\
\!0,&\mbox{otherwise},
\end{array}
\end{equation*}
we have
\begin{equation}\label{eqn_orthogonality_applied}
\begin{aligned}
\hspace{2em}&\hspace{-2em}
\frac1{3^{\nu-\lambda}}
\#\bigl\{
0\leq j<16^\nu:
j\in \mathcal W, j\equiv a\bmod d
\bigr\}
-\frac 1d
\\&=
\frac 1d
\frac1{3^{\nu-\lambda}}
\sum_{0\leq \ell<d}
\sum_{\substack{0\leq j<16^\nu\\j\in\mathcal W}}
\e\bigl(\ell(j-a)/d\bigr)
-\frac 1d\\
&=
\frac 1d
\sum_{1\leq \ell<d}
\e\bigl(-\ell a/d\bigr)
\frac1{3^{\nu-\lambda}}
\sum_{\substack{0\leq j<16^\nu\\j\in\mathcal W}}
\e\bigl(j\ell/d\bigr)
\leq
\sum_{1\leq \ell<d}
\left\lvert
G\left(\frac \ell d,\lambda,\nu\right)
\right\rvert.
\end{aligned}
\end{equation}
If $G\bigl(\ell/d,\lambda,\nu\bigr)$ converges to zero as $\nu\rightarrow\infty$, for all $\ell\in\{1,\ldots,d-1\}$, the last sum in~\eqref{eqn_orthogonality_applied} is eventually smaller than $1/d$.
This implies that the cardinalities $\#\bigl\{0\leq j<16^\nu:j\in\mathcal W_\lambda, j\equiv a\bmod d\bigr\}$ diverge to $\infty$, and therefore
$\mathcal W_\lambda\cap(a+d\mathbb Z)$ is infinite.

\begin{proof}[Proof of Lemma~\ref{lem_EMS}]
By~\eqref{eqn_orthogonality_applied}, we have to show that the product
\begin{equation}\label{eqn_prod_sufficient}
\prod_{\lambda\leq r<\nu}U(16^r\ell/d)
=
\prod_{\lambda\leq r<\nu}\bigl(1+\e\bigl(4\cdot 16^r\ell/d\bigr)+\e\bigl(8\cdot 16^r\ell/d\bigr)\bigr)
\end{equation}
converges to zero as $\nu\rightarrow\infty$.
To this end, we use the following lemma~\cite{D1972} by Delange.
\begin{lemma}[Delange]\label{lem_delange}
Assume that $q\geq 2$ is an integer and $z_1,\ldots,z_{q-1}$ are complex numbers such that $\lvert z_j\rvert \leq 1$ for $1\leq j<q$.
Then
\[\left\lvert \frac1q\bigl(1+z_1+\cdots+z_{q-1}\bigr)\right\rvert\leq 1-\frac1{2q}\max_{1\leq j<q}\bigl(1-\realpart z_j\bigr).\]
\end{lemma}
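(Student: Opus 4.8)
The plan is to isolate the index $j_0\in\{1,\dots,q-1\}$ at which the maximum
\[M\coloneqq\max_{1\leq j<q}\bigl(1-\realpart z_j\bigr)\]
is attained, to bound the single pair $1+z_{j_0}$ sharply, and to treat the remaining $q-2$ terms by the plain triangle inequality. As a first, routine observation I would record that $0\le M\le 2$: from $\realpart z_j\le|z_j|\le1$ we get $M\ge0$, and from $\realpart z_j\ge-|z_j|\ge-1$ we get $M\le2$.

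The heart of the argument is the estimate $\lvert 1+z_{j_0}\rvert\le 2-M/2$, which I would prove by squaring. Since $\lvert z_{j_0}\rvert\le1$,
\[\lvert 1+z_{j_0}\rvert^2=1+2\realpart z_{j_0}+\lvert z_{j_0}\rvert^2\leq 2+2\realpart z_{j_0}=4-2M\leq 4-2M+\tfrac{M^2}4=\Bigl(2-\tfrac M2\Bigr)^{\!2},\]
and because both $\lvert 1+z_{j_0}\rvert$ and $2-M/2\ge1$ are nonnegative, taking square roots is legitimate. Splitting off the $j_0$-term and bounding each of the other $q-2$ summands by $1$ then gives
\[\Bigl\lvert 1+\sum_{1\leq j<q}z_j\Bigr\rvert\leq \lvert 1+z_{j_0}\rvert+\sum_{\substack{1\leq j<q\\ j\neq j_0}}\lvert z_j\rvert\leq\Bigl(2-\tfrac M2\Bigr)+(q-2)=q-\tfrac M2,\]
and dividing through by $q$ yields exactly $\bigl\lvert\tfrac1q(1+z_1+\cdots+z_{q-1})\bigr\rvert\le 1-\tfrac{M}{2q}$, which is the assertion.

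I do not anticipate a genuine obstacle: the lemma is elementary, and the only point demanding care is the bookkeeping. There are precisely $q-1$ numbers $z_j$; exactly one of them is absorbed into the pair $1+z_{j_0}$, whose modulus we control by the quadratic estimate above, which leaves $q-2$ terms to be discarded with the trivial bound $1$. It is this arithmetic — "$2+(q-2)=q$" together with the saving $M/2$ coming only from the single distinguished term — that produces the constant $1/(2q)$; choosing a different normalisation would weaken it. (Alternatively, one could replace the squaring step by the inequality $\sqrt{1-x}\le 1-x/2$ for $x\in[0,1]$ applied with $x=M/2$, deriving $\lvert1+z_{j_0}\rvert\le\sqrt{4-2M}=2\sqrt{1-M/2}\le2-M/2$, but the version above is self-contained.)
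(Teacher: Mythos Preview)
Your proof is correct. The paper does not supply its own proof of this lemma: it is quoted verbatim as a result of Delange~\cite{D1972} and then applied. Your argument --- isolate the maximising index, bound $\lvert 1+z_{j_0}\rvert$ by $2-M/2$ via the quadratic identity, and discard the remaining $q-2$ terms with the trivial bound --- is the standard elementary proof and matches what one finds in Delange's note.
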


Since $d$ is odd and $1\leq \ell<d$, the integer
$4k16^r\ell$ is not a multiple of $d$ for $k\in\{1,2\}$.
It follows that $\realpart \e\bigl(4k16^r\ell/d\bigr)
\leq 1-\tilde\varepsilon$ for some $\tilde\varepsilon>0$ only depending on $d$.

Therefore each factor in~\eqref{eqn_G_prod} is smaller than $1-\varepsilon$, where $\varepsilon>0$ does not depend on $r$.
Consequently, by Lemma~\ref{lem_delange} the product~\eqref{eqn_prod_sufficient} converges to zero.
Lemma~\ref{lem_EMS}, and therefore Lemma~\ref{lem_step_2}, is proved.
\end{proof}
Now we combine the two steps, corresponding to the cases {\inparaenum[(i)] \item $2^k$ and \item $d$ odd.}

\noindent
Let $k\geq 0$ and $d\geq 1$ be integers, and $d$ odd. We are interested in a residue class $a+2^kd\,\mathbb Z$, where $a\in\mathbb Z$.
Choose
\[a^{(1)}\coloneqq a\bmod 2^k\in\{0,\ldots,2^k-1\}.\]
Choose $\mu$ large enough such that $16^\mu\geq 2^k$.
By Lemma~\ref{lem_first_step} there exists $\lambda\geq \mu$ in such a way that
\[\kappa^{(1)}\equiv a^{(1)}\bmod 2^k,\]
where $\kappa^{(1)}\coloneqq N_{\varepsilon_\mu,\ldots,\varepsilon_{\lambda-1}}$ and $\varepsilon_\ell=3$ for $\mu\leq\ell<\lambda$.
Next, choose
\[a^{(2)}\coloneqq \bigl(a-\kappa^{(1)}\bigr)\bmod d.\]
By Lemma~\ref{lem_step_2}, the set
$\mathcal W_\lambda\cap\bigl(a^{(2)}+d\,\mathbb Z\bigr)$
is not empty.
Let $\sum_{\lambda\leq \ell<\nu}4\,\varepsilon_\ell16^\ell$ be an element,
where $\varepsilon_\ell\in\{0,1,2\}$ for $\lambda\leq \ell<\nu$.
By~\eqref{eqn_Neps_def} we have
\begin{align*}
\kappa\coloneqq N_{\varepsilon_\mu,\ldots,\varepsilon_{\lambda-1},\varepsilon_\lambda,\ldots,\varepsilon_{\nu-1}}
&=
\kappa^{(1)}+\kappa^{(2)},
\end{align*}
where
\begin{align*}
\kappa^{(2)}\coloneqq
N_{\varepsilon_\lambda,\ldots,\varepsilon_{\nu-1}}.
\end{align*}
The integer $\kappa^{(1)}$ lies in the residue class $a^{(1)}+2^k\mathbb Z$ by construction, while $\kappa^{(2)}$ is divisible by $2^k$, as no digit among $\varepsilon_\lambda,\ldots,\varepsilon_{\nu-1}$ equals $3$.
It follows that $\kappa\in a^{(1)}+2^k\mathbb Z=a+2^k\mathbb Z$.
Moreover, by~\eqref{eqn_steps},
\[\kappa^{(2)}=\sum_{\lambda\leq \ell<\nu}4\,\varepsilon_\ell16^\ell\in a^{(2)}+d\,\mathbb Z,\]
hence
$\kappa=\kappa^{(1)}+\kappa^{(2)}
\equiv \kappa^{(1)}+\bigl(a-\kappa^{(1)}\bigr)\equiv a\bmod d$.

Summarising, we have
$\kappa\in \bigl(a+2^k\mathbb Z\bigr)\cap\bigl(a+d\,\mathbb Z\bigr)$.
Since $2^k$ and $d$ are coprime, which implies $2^k\mathbb Z\cap d\,\mathbb Z=2^kd\,\mathbb Z$, we have
$\bigl(a+2^k\mathbb Z\bigr)\cap\bigl(a+d\,\mathbb Z\bigr)=a+2^kd\,\mathbb Z$ 
(applying a shift by $a$)
and therefore $\kappa\in a+2^kd\,\mathbb Z$.
This finishes the proof of Proposition~\ref{prp_AP}.\qed

\subsection{Non-automaticity of $\mathbf B$}
In order to prove that $\mathbf B$ is not automatic, we use the characterization by the $k$-kernel:
a sequence $(a_n)_{n\geq 0}$ is $k$-automatic if and only if the set
\begin{equation}\label{eqn_k_kernel}
\bigl\{\bigl(a_{\ell+k^jn}\bigr)_{n\geq 0}: j\geq 0, 0\leq \ell<k^j\bigr\}
\end{equation}
is finite.

We are now in the position to prove that \emph{any} two arithmetic subsequences of $\diffseq$ with the same modulus $m$ and different shifts $\ell_1$, $\ell_2$ are different:
the sequences $\bigl(\diffseq(\ell_1+nm)\bigr)_{n\geq 0}$ and $\bigl(\diffseq(\ell_2+nm)\bigr)_{n\geq 0}$ cannot be equal.
This will prove in particular that the $k$-kernel is infinite and thus non-automaticity of the gap sequence for $\tO\tL$.

Let us assume, in order to obtain a contradiction, that the sequence $\diffseq$ contains two identical arithmetic subsequences with common differences equal to $m$, indexed by
$n\mapsto \ell_1+nm$ and $n\mapsto \ell_2+nm$ respectively,
where $\ell_1<\ell_2$.
Let $r=\ell_2-\ell_1$, and choose $\mu$ large enough such that $16^\mu\geq 2r$.
By Proposition~\ref{prp_AP}, the block $\sigma^\mu(\ta)$ appears in $\bardiffseq$ at positions that hit each residue class.
In particular, for each $s\in\{0,\ldots,r-1\}$ we choose the residue class $\ell_1-s+m\mathbb Z$, and we can find an index $n$ such that
$\sigma^\mu(\ta)$ appears at position $\ell_1-s+nm$ in $\bardiffseq$.
Since $16^m\geq 2r>s$, this means that $\ell_1+mn$ hits the $s$th letter in $\sigma^\mu(\ta)$, in symbols,
\[\bardiffseq_{\ell_1+nm}=\sigma^\mu(\ta)\bigr\rvert_s.\]
Since $s+r$ is still in the range $[0,16^\mu)$, we also have
\[\bardiffseq_{\ell_2+nm}=\sigma^\mu(\ta)\bigr\rvert_{s+r}\]
for the same index $n$.
Applying the coding $p$ defined in~\eqref{eqn_diffseq_morphism}, and our equality assumption, we see that
\[
\diffseq_s=
p\bigl(\sigma^\mu(\ta)\bigr\rvert_s\bigr)=
\diffseq_{\ell_1+nm}=\diffseq_{\ell_2+nm}=
p\bigl(\sigma^\mu(\ta)\bigr\rvert_{s+r}\bigr)=\diffseq_{s+r}.\]
Carrying this out for all $s\in\{0,\ldots,r-1\}$, we see that
the first $2r$ terms of $\diffseq$ form a square.
Now there are two cases to consider.

\noindent\textbf{The case $r=1$.}
Assume that $\diffseq_{\ell_1+nm}=\diffseq_{\ell_1+1+nm}$ for all $n\geq 0$.
By Proposition~\ref{prp_AP}, the positions where the prefix $\mathtt{3342}=\diffseq_0\diffseq_1\diffseq_2\diffseq_3$ appears as a factor in $\diffseq$ hit every residue class.
In particular, there is an index $n$ such that 
the block $\mathtt{3342}$ can be found at position $\ell_1-1+nm$ in $\diffseq$.
This implies
$\mathtt{3}=\diffseq_1=\diffseq_{\ell_1+nm}=\diffseq_{\ell_1+1+nm}=\diffseq_2=\mathtt{4}$,
a contradiction.

\noindent\textbf{The case $r\ge2$.}
In this case we will resort to the fact, proved below, that $\diffseq$ does not contain squares of length $>2$.
Therefore we get a contradiction also in this case.
In order to complete the proof that $\diffseq$ is not automatic, it remains to prove (the second part of) the following result.
\begin{lemma}
The infinite word $\bardiffseq$ is squarefree.
The word $\diffseq$ does not contain squares of length $>2$.
\end{lemma}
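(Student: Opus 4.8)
The plan is to prove both assertions by transporting squarefreeness from the known squarefree sequence $\autoseq$ (Lemma~\ref{lem_squarefree}) to $\bardiffseq$ via the relation $\bardiffseq = q(\autoseq)$ established in the proof of Lemma~\ref{lem_diffseq_substitution}, where $q$ inserts a barred copy after each $\ta$ and fixes $\tb,\tc$; and then to understand what happens when the non-injective coding $p$ is applied. First I would record that $q$ is injective on infinite words and, more usefully, that a factor $w$ of $\bardiffseq$ of length $\ge 2$ determines, up to bounded truncation at its two ends, a factor of $\autoseq$: reading a block $\ta\tba$ in $\bardiffseq$ we recover a single $\ta$ in $\autoseq$, and $\tb,\tc$ stay as themselves, while a lone $\ta$ (not followed by $\tba$) or a lone $\tba$ (not preceded by $\ta$) can occur only at the very start or end of $w$.

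For the first statement, suppose $\bardiffseq$ contains a square $XX$ with $|X|=h\ge 1$. If $h=1$ the square is one of $\ta\ta$, $\tba\tba$, $\tb\tb$, $\tc\tc$; none of these occurs, since in $\bardiffseq$ an $\ta$ is always immediately followed by $\tba$ and preceded (when not initial) by $\tc$ or $\tb$, an $\tba$ is always immediately preceded by $\ta$, and $\tb\tb$, $\tc\tc$ would pull back to $\tb\tb$, $\tc\tc$ in $\autoseq$. So assume $h\ge 2$. Pull the occurrence of $XX$ back through $q$: since $XX$ has length $\ge 4$, its "interior" maps back to a genuine factor of $\autoseq$ of the form $Y'Y''$ where $Y'$ and $Y''$ are obtained from the two copies of $X$ by possibly deleting a leading half-block $\tba$ or a trailing half-block $\ta$. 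The key combinatorial point is a \emph{synchronisation} observation: the positions in $\bardiffseq$ that sit strictly between two consecutive "$\ta\tba$-or-letter" boundaries are forced, so the second copy of $X$ begins at a $q$-image boundary of the same type as the first; hence $Y'=Y''$ (no truncation discrepancy between the halves) and $Y'Y'$ is a square in $\autoseq$ of length $\ge 1$ — contradicting Lemma~\ref{lem_squarefree}. The main obstacle is exactly making this synchronisation argument airtight, i.e.\ ruling out that the period $h$ of the square in $\bardiffseq$ is "out of phase" with the $\ta\tba$-block structure; here one uses that a square of period $h$ forces $\bardiffseq_{i}=\bardiffseq_{i+h}$ for $h$ consecutive $i$, together with the rigid local structure (every $\ta$ is followed by $\tba$, every $\tba$ preceded by $\ta$) to conclude that $h$ must be such that the block decomposition is shift-invariant across the square, so the pulled-back word is a true square.

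For the second statement, let $SS$ be a square in $\diffseq=p(\bardiffseq)$ of length $2h>2$, so $h\ge 2$; I must derive a square of length $>2$ in $\bardiffseq$ and invoke the first part. The coding $p$ merges $\ta$ and $\tba$ both to $\tM$ but keeps $\tb\mapsto\tQ$, $\tc\mapsto\tZ$ distinct, so $p$ is "almost injective": the only ambiguity is distinguishing $\ta$ from $\tba$. Lift the two occurrences of $S$ to factors $X_1$, $X_2$ of $\bardiffseq$ of length $h$ with $p(X_1)=p(X_2)=S$; these differ only in possibly having $\ta$ versus $\tba$ in some positions. Now exploit the same rigid structure: in $\bardiffseq$ the letter in a position is $\tba$ iff its left neighbour is $\ta$ iff (equivalently) it is the second letter of an $\ta\tba$-block, and since $h\ge 2$ each $\tM$ of $S$ that is not at the very boundary of $X_i$ has its $\ta$/$\tba$ status determined inside $X_i$ by the preceding letter; the boundary letters are pinned down by looking one position to the left in $\bardiffseq$, which exists because the occurrences are genuine factors of the one-sided infinite word and can be assumed to start past position $0$ (using Proposition~\ref{prp_AP} or simply that $\diffseq$ is infinite, a square occurring once occurs with a left context). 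Hence $X_1=X_2$, and $X_1X_1$ — or a length-$(\ge2h-1)$ subfactor of it that is still a square after trimming at most one ambiguous boundary symbol — is a square of length $>2$ in $\bardiffseq$, contradicting the first part. Thus $\diffseq$ contains no square of length $>2$, completing the proof and, together with the case analysis above, the proof that $\diffseq$ is not automatic.
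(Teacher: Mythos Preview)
Your approach to the first statement is essentially the paper's, just phrased in terms of inverting $q$ rather than pushing forward via $p$ and then applying the ``inverse morphism'' $\tilde r:\tM\tM\mapsto\ta,\ \tQ\mapsto\tb,\ \tZ\mapsto\tc$. The synchronisation worry you flag is exactly what the paper handles by a two-case split on the first letter of $C$: if $C$ begins with $\tba$ then (since $\tba$ must be preceded by $\ta$) $C$ ends with $\ta$, so $\ta CC\tba=\ta\tba\,y\,\ta\tba\,y\,\ta\tba$ is a factor composed of full $q$-blocks, yielding a square $\ta\tilde y\ta\tilde y$ in $\autoseq$; if $C$ begins with $\ta,\tb,\tc$ then (since the second copy forces the first to end in $\tba,\tb,\tc$) $C$ itself is composed of full $q$-blocks. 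Making your sketch precise amounts to exactly this split.

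For the second statement there is a genuine gap. You lift the two copies of $S$ to $X_1,X_2$ and argue that each letter of $X_i$ is determined by its left neighbour in $\bardiffseq$; but this only shows each $X_i$ is well-defined, not that $X_1=X_2$, since the left neighbour of $X_2$ is the last letter of $X_1$ while the left neighbour of $X_1$ lies outside. Your fallback (``trimming at most one ambiguous boundary symbol'') cannot work: removing one letter from a square does not leave a square. The correct observation is that the lift of any factor $S$ of $\diffseq$ with $\lvert S\rvert\ge2$ is determined by $S$ alone, because $\tM$s in $\diffseq$ occur in maximal runs of length exactly two; thus a boundary $\tM$ in $S$ is $\tba$ iff its neighbour \emph{inside} $S$ is not $\tM$, independently of outside context. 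With this, $X_1=X_2$ and your reduction to part one goes through. Alternatively, and this is what the paper does, skip $\bardiffseq$ entirely: do the same first-letter case split on $C$ in $\diffseq$ (``$C$ begins with a single $\tM$'' versus ``$C$ begins with $\tM\tM$, $\tQ$, or $\tZ$''), extend to a word composed of full $\tM\tM$-blocks, and apply $\tilde r$ directly to produce a square in $\autoseq$.
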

\begin{proof}
We begin with the first statement. 
Note first that, by the morphism~\eqref{eqn_BA_transition},
letters `$\tM$' in $\diffseq$ appear in pairs;
moreover, the squarefreeness of $\autoseq$ implies that there are no runs of three or more $\tM$s.
This implies that the morphism $r$ defined in~\eqref{eqn_BA_transition} can be `reversed' in the sense that $\autoseq$ can be restored from $\diffseq$ by the (unambiguous) rule $\tilde r:\tM\tM\mapsto\ta$, $\tQ\mapsto\tb$, $\tZ\mapsto\tc$.
Also, $\bardiffseq$ can be restored from $\diffseq$ by the (unambiguous) rule $\tM\tM\mapsto\ta\tba$, $\tQ\mapsto\tb$, $\tZ\mapsto\tc$, thus reversing the effect on $\bardiffseq$ of the morphism $p$ defined in~\eqref{eqn_diffseq_morphism}.
In particular, since $\autoseq$ is squarefree, each occurrence of the factor $\ta\tba$ in $\bardiffseq$ is bordered by symbols $\in\{\tb,\tc\}$ (where of course the first occurrence at $0$ is not bordered on the left by another symbol).

Assume, in order to obtain a contradiction, that the square $CC$ is a factor of $\bardiffseq$.
We distinguish between two cases.

\noindent\textbf{The case $\lvert C\rvert=1$.}
Let $C$ consist of a single symbol $x\in\{\ta,\tba,\tb,\tc\}$. The squarefreeness of $\autoseq$ forbids $x\in\{\tb,\tc\}$; moreover, we saw a moment ago that $\ta$ and $\tba$ may only appear together, bordered by symbols $\in\{\tb,\tc\}$. This excludes the possibility $x\in\{\ta,\tba\}$, therefore this case leads to a contradiction.

\noindent\textbf{The case $\lvert C\rvert\geq 2$.}
There are two cases to consider.
{\inparaenum[(i)] \item
Assume that $C$ begins with $\tba$.
In this case, $C$ has to end with $\ta$: the concatenation $CC$ has to be a factor of $\bardiffseq$, and therefore the symbol $\tba$ at the start of the second `$C$' has to be preceded by a symbol $\ta$.
Analogously, each occurrence of the word $CC$ is immediately preceded by $\ta$, and followed by $\tba$. That is, $\ta CC\tba$ appears as a factor of $\bardiffseq$. Writing $C=\tba y\ta$ for a finite (possibly empty) word $y$ over $\{\ta,\tba,\tb,\tc\}$, we see that
$\ta\tba y\ta\tba y\ta\tba$ is a factor of $\bardiffseq$.
Applying the coding $p$, it follows that $T=\ta\ta y\ta\ta y\ta\ta$ appears in $\diffseq$, and it is a concatenation of the words $\tM\tM$, $\tQ$, and $\tZ$.
Consequently, it makes sense to
apply the `inverse morphism' $\tilde r:\tM\tM\mapsto \ta$, $\tQ\mapsto\tb$, $\tZ\mapsto\tc$.
Therefore $\tilde r(T)=\ta z\ta z\ta$, for some finite word $z$ over $\{\ta,\tb,\tc\}$, appears in $\autoseq$. 
This contradicts Lemma~\ref{lem_squarefree}.
\item Assume that $C$ starts with a letter $\in\{\ta,\tb,\tc\}$.
In this case, $C$ ends with a letter $\in\{\tba,\tb,\tc\}$: otherwise, the concatenation $CC$, and therefore $\bardiffseq$, would contain $\ta\ta$, which we have already ruled out.
We apply $p$, and in this case $p(C)$ is a concatenation of the words $\tM\tM$, $\tQ$, and $\tZ$.
Therefore we can form $\tilde r(p(C))$, revealing that the square $\tilde r(p(C))\tilde r(p(C))$ is a factor of $\autoseq$. This is a contradiction.
}

We have to prove the second statement. Assume that $CC$ is a factor of $\diffseq$, where $\lvert C\rvert\geq 2$.
This proof is analogous to the corresponding case for $\bardiffseq$, and we skip some of the details that we have already seen there.
{\inparaenum[(i)] \item
Assume that $C$ begins with exactly one $\ta$.
In this case, $C$ has to end with exactly one $\ta$,
and therefore $C=\ta y \ta$ for a finite word $y$ over $\{\ta,\tb,\tc\}$.
It follows that $\ta\ta y\ta\ta y\ta\ta$ is a factor of $\diffseq$.
Applying $\tilde r$, we obtain a contradiction to Lemma~\ref{lem_squarefree}.

\item Assume that $C$ starts with $\ta\ta$, $\tb$, or $\tc$.
In this case, $C$ ends with $\ta\ta$, $\tb$, or $\tc$, otherwise $CC$, and therefore $\bardiffseq$, would contain a block of $\ta$s of length $\neq 2$.
We apply $p$ on the word $CC$, followed by $\tilde r$, which yields the square $\tilde r(p(C))\tilde r(p(C))$. Again, this contradicts Lemma~\ref{lem_squarefree}.
}
\end{proof}

Summarising, arithmetic subsequences of $\diffseq$ with common difference $m$ are distinct as soon as their offsets differ.
In particular, for each integer $k\ge2$ the $k$-kernel of $\diffseq$ is infinite.
Therefore $\diffseq$ is not automatic, which proves the case $w=\tO\tL$ of Theorem~\ref{thm_main}.

\subsection{Occurrences of general factors in $\bt$}\label{sec_general}
We begin with the case $w=\tL\tO$.
We will work with the Thue--Morse morphism $\tau:\tO\mapsto\tO\tL$, $\tL\mapsto\tL\tO$, defined in~\eqref{eqn_tau}.
First of all, we recall the well-known fact that $a_{k+1}=\tau^{k+1}(\tO)$ can be constructed from $a_k=\tau^k(\tO)$ by
concatenating $a_k$ and its Boolean complement $\overline{a_k}$ (which replaces each $\tO$ by $\tL$ and each $\tL$ by $\tO$).
The proof of this little fact is by an easy induction.
For $k=0$ we have $a_1=\tO\tL=\tO\overline{\tO}$.
The case $k\geq 1$ makes use of the identity $\tau(\overline w)=\overline{\tau(w)}$, valid for each word $w$ over $\{\tO,\tL\}$, which follows from the special structure of the morphism $\tau$.
Applying this identity and the induction hypothesis, we obtain
\begin{align*}
a_{k+1}&=\tau\bigl(\tau^k(\tO)\bigr)=\tau\bigl(a_{k-1}\overline{a_{k-1}}\bigr)
\\&=\tau\bigl(a_{k-1}\bigr)\tau\bigl(\overline{a_{k-1}}\bigr)
=\tau\bigl(a_{k-1}\bigr)\overline{\tau\bigl(a_{k-1}\bigr)}
=a_k\overline{a_k}.
\end{align*}

Using this, we show that for even $k\geq 0$, the word $\tau^k(\tO)$ is a palindrome.
The case $k=0$ is trivial.
If $a_k=\tau^k(\tO)$ is a palindrome, then $a_{k+2}=\tau(\tau(a_k))=\tau(a_k\overline{a_k})=a_k\overline{a_k}\overline{a_k}a_k$ is clearly a palindrome too, and the statement follows by induction.
In particular, we see from the above that
\begin{equation}\label{eqn_block_pairs}
\tau^k(\tO)=\tau^{k-1}(\tO)\tau^{k-1}(\tL),\quad
\tau^k(\tL)=\tau^{k-1}(\tL)\tau^{k-1}(\tO)
\quad\mbox{ for all }k\geq 0.
\end{equation}
Note that, by applying $\tau^k$ on $\bt$, every $\tO$ gets replaced by $\tau^k(\tO)$ and every $\tL$ by $\tau^k(\tL)$, and the result is again $\bt$ since it is a fixed point of $\tau$.
It follows that
\begin{equation}\label{eqn_concat}
\begin{array}{rl}
\mbox{for all }k\geq 0,&\mbox{ we have }
\bt=A_{\bt_0}A_{\bt_1}A_{\bt_2}\cdots,\\[1mm]
&\mbox{ where }
A_x=\tau^k(x)\mbox{ for }x\in\{\tO,\tL\}.
\end{array}
\end{equation}

Let $(r_j)_{j\geq 0}$ be the increasing sequence of indices where $\tL\tO$ occurs in $\bt$.
For $k$ even, let $J=J(k)$ be the number of occurrences of $\tL\tO$ with indices $\leq 2^k-2$.
Note that $r_{J-1}=2^k-2$.
We read the (palindromic) sequence $a_k$, of length $2^k$, backwards;
it follows that $(2^k-1-r_{J-1-j})_{0\leq j<J}$ is the increasing sequence of indices pointing to the letter $\tL$ in an occurrence of $\tO\tL$ in $a_k$.
Therefore
\[\bigl(2^k-2-r_{J-1-j}\bigr)_{0\leq j<J}\]
is the increasing sequence of indices where $\tO\tL$ occurs in $a_k$.
Consequently, by the definition of $\diffseq$ as the differences of these indices, we obtain
$\diffseq_j=-r_{J-1-(j+1)}+r_{J-1-j}$ for $0\leq j<J-1$ and thus
\begin{equation}\label{eqn_backwards}
r_{j+1}-r_j=\diffseq_{J-2-j}\quad\mbox{for }0\leq j\le J-2.
\end{equation}
We have to prove that the sequence
\begin{equation}\label{eqn_checkdiffseq_def}
\check{\diffseq}=(r_{j+1}-r_j)_{j\geq 0}
\end{equation}
is not automatic.
More generally, we prove that any two arithmetic subsequences
\[L^{(1)}=\bigl(\check{\diffseq}(\ell_1+nd)\bigr)_{n\geq 0},\quad
L^{(2)}=\bigl(\check{\diffseq}(\ell_2+nd)\bigr)_{n\geq 0},\]
where $d\geq 1$ and $\ell_1\neq\ell_2$, are different.
In order to obtain a contradiction, let us assume that $L^{(1)}=L^{(2)}$,
and let $k\geq 0$ be even.
By~\eqref{eqn_backwards}, we get arithmetic subsequences $M_1$, $M_2$ of $\diffseq$ with common difference $d$, different offsets $m_1(k),m_2(k)\in\{0,\ldots,d-1\}$, and length equal to $J(k)-1$, such that
\[\diffseq_{m_1(k)+nd}=M^{(1)}_j=M^{(2)}_j=\diffseq_{m_2(k)+nd}\quad\mbox{for }0\leq n\le J(k)-2.\]
Note the important fact that the offsets $m_j(k)$ are bounded by $d$.
Since there are only $d(d-1)/2$ pairs $(a,b)\in \{0,\ldots,d-1\}^2$ with $a\neq b$,
it follows that there are two different offsets $0\le\overline{m_1},\overline{m_2}<d$ with the following property:
there are arbitrarily long arithmetic subsequences of $\diffseq$ with indices of the form $\overline{m_1}+nd$ and $\overline{m_2}+nd$ respectively, taking the same values.
This is just the statement that the \emph{infinite} sequences $\bigl(\diffseq_{\overline{m_1}+nd})_{n\geq 0}$ and $\bigl(\diffseq_{\overline{m_2}+nd}\bigr)_{n\geq 0}$ are equal.
In the course of proving that $\diffseq$ is not automatic (which is the case $w=\tO\tL$ of Theorem~\ref{thm_main}) we proved that this is impossible, and we obtain a contradiction. The sequence $\check{\diffseq}$ is therefore not automatic either, which finishes the case $w=\tL\tO$.

We proceed to the case $w=\tO\tO$.
Let $(a_i)_{i\geq 0}$ be the increasing sequence of indices $j$ such that $\bt_j\bt_{j+1}=\tO\tO$.
Assume that $i\geq 0$, and set $j\coloneqq a_i$.
We have $j\equiv 1\bmod 2$, since $\bt_{2j'}=\overline{\bt_{2j'+1}}$ for all $j'\geq 0$ (where the overline denotes the Boolean complement, $\tO\mapsto \tL$, $\tL\mapsto \tO$). Equality $\bt_j=\bt_{j+1}$ (as needed) can therefore only occur at odd indices $j$, and we choose $j'\ge0$ such that $j=2j'+1$.
Necessarily, $\bt_{j'}=\tL$ and $\bt_{j'+1}=\tO$, since the identities $\bt_{2j'+1}=\overline{\bt_{j'}}$ and $\bt_{2j'+2}=\bt_{j'+1}$ would produce an output $\bt_{2j'+1}\bt_{2j'+2}\neq\tO\tO$ in the other case.
On the other hand, $\bt_{j'}\bt_{j'+1}=\tL\tO$ indeed implies $\bt_{2j'+1}\bt_{2j'+2}=\tO\tO$.
Each occurrence of $\tO\tO$ in $\bt$, at position $j$, therefore corresponds in a bijective manner to an occurrence of $\tL\tO$, at position $(j-1)/2$ (which is an integer).
It follows that the corresponding gap sequence equals $2\check\diffseq$, which is not automatic by the already proved case $w=\tL\tO$.

In a completely analogous manner, we can reduce the case $w=\tL\tL$ to the case $\tO\tL$,
and the gap sequence equals $2\diffseq$, which is not automatic either.

We will now reduce the case of general factors $w$ of $\bt$ of length $\ge3$ to these four cases.
\begin{lemma}\label{lem_alternating}
For $x,y\in\{\tO,\tL\}$, let $(a^{xy}_k)_{k\ge0}$ be the increasing sequence of indices $j$ such that $\bt_j\bt_{j+1}=xy$.
We have
\begin{equation}\label{eqn_chains}
\begin{aligned}
&a^{\tO\tL}_0<a^{\tL\tO}_0<a^{\tO\tL}_1<a^{\tL\tO}_1<a^{\tO\tL}_2<a^{\tL\tO}_2<
\cdots\quad\mbox{and}\\
&a^{\tL\tL}_0<a^{\tO\tO}_0<a^{\tL\tL}_1<a^{\tO\tO}_1<a^{\tL\tL}_2<a^{\tO\tO}_2<\cdots.
\end{aligned}
\end{equation}
\end{lemma}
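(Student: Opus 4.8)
The statement to be proved is Lemma~\ref{lem_alternating}, which asserts two interleaving chains among the occurrences of the four two-letter blocks. I will prove the first chain
\[
a^{\tO\tL}_0<a^{\tL\tO}_0<a^{\tO\tL}_1<a^{\tL\tO}_1<\cdots;
\]
the second chain follows by applying the Boolean complement $\overline{\phantom{w}}$, which swaps $\tO\leftrightarrow\tL$ and hence interchanges $\tO\tL\leftrightarrow\tL\tO$ and $\tO\tO\leftrightarrow\tL\tL$, while preserving the order of positions, so the two statements are equivalent. The key observation is the one already used in the discussion following~\eqref{eqn_TMtoB}: the Thue--Morse word is a concatenation of the two blocks $\tO\tL$ and $\tL\tO$, because $\bt_{2n}\ne\bt_{2n+1}$ for every $n$ (this is immediate from $\tau(\tO)=\tO\tL$, $\tau(\tL)=\tL\tO$ and the fact that $\bt$ is a fixed point of $\tau$, so $\bt_{[2n,2n+2)}=\tau(\bt_n)$). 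Thus $\bt$ has no factor $\tO\tO\tO$ or $\tL\tL\tL$, and every maximal run of equal letters has length $1$ or $2$.

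With that in hand, I would argue as follows. Reading $\bt$ from left to right, it decomposes into maximal runs $R_1R_2R_3\cdots$ of equal letters, where each $R_i$ has length $1$ or $2$ and consecutive runs carry opposite letters (runs of $\tO$s and runs of $\tL$s strictly alternate). An occurrence of $\tO\tL$ at position $j$ corresponds exactly to $j$ being the last index of a maximal run of $\tO$s (so that $\bt_j=\tO$, $\bt_{j+1}=\tL$); an occurrence of $\tL\tO$ corresponds to $j$ being the last index of a maximal run of $\tL$s; an occurrence of $\tO\tO$ corresponds to a run of $\tO$s of length exactly $2$, with $j$ its first index; and $\tL\tL$ likewise for runs of $\tL$s of length $2$. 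In particular, the "last index of a maximal run" positions, taken in order, alternate strictly between $\tO\tL$-type and $\tL\tO$-type, simply because the runs themselves alternate in letter; since $\bt$ begins with $\tO$, the first such position is a $\tO\tL$-occurrence (indeed $a^{\tO\tL}_0=0$), giving precisely the first chain in~\eqref{eqn_chains}. For the second chain, one notes that between two consecutive runs of $\tL$s there lies exactly one run of $\tO$s, and by uniform recurrence (or directly: since $\tO\tO\tO$ is forbidden, a run of $\tO$s of length $2$ must be followed soon by more material, and the absence of $\tL\tL$ between certain runs is what is needed) the length-$2$ runs of $\tO$s and of $\tL$s also strictly alternate and begin with a $\tL\tL$; I would make this last point precise by observing that $\bt=\tau(\bt)$ means every length-$2$ run of $\bt$ is the image $\tau(x)$ reading "backwards" is not how it works — rather, a length-$2$ run at position $2n$--$2n+1$ would force $\bt_n=\bt_{n+1}$, contradicting nothing, so instead length-$2$ runs occur at odd starting positions and correspond bijectively to occurrences of a two-letter block in $\bt$ itself, which is the reduction already carried out for $w=\tO\tO$ and $w=\tL\tL$ above; chaining that bijection with the first part of the lemma applied to $\bt$ gives the alternation $\tL\tL,\tO\tO,\tL\tL,\tO\tO,\dots$.

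The main obstacle, and the only place requiring care, is pinning down the starting letter and the strict (not merely weak) alternation of the length-$2$ runs for the second chain — i.e.\ ruling out two consecutive length-$2$ runs of the same parity with no length-$2$ run of the other parity in between, and confirming that the first length-$2$ run is a run of $\tL$s. Both are clean consequences of the self-similar structure: using $\bt=\tau(\bt)$, a length-$2$ run starting at position $2n+1$ (necessarily odd, as noted in the $w=\tO\tO$ argument above) reads $\bt_{2n+1}\bt_{2n+2}=\overline{\bt_n}\,\bt_{n+1}$, so it is a run of $\tO$s iff $\bt_n\bt_{n+1}=\tL\tO$ and a run of $\tL$s iff $\bt_n\bt_{n+1}=\tO\tL$. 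Hence the sequence of length-$2$ runs of $\bt$, listed in order, is the image under the map $2n+1\mapssto$(that run) of the interleaved sequence $a^{\tO\tL}_0<a^{\tL\tO}_0<a^{\tO\tL}_1<\cdots$ of the first chain, with $\tO\tL\mapsto\tL\tL$ and $\tL\tO\mapsto\tO\tO$; since that first chain starts with an $\tO\tL$-occurrence and strictly alternates, the second chain starts with $\tL\tL$ and strictly alternates, which is exactly~\eqref{eqn_chains}. (I will fix the typo above: the display $2n+1\mapsto$ is meant, and "$\mapssto$" is not a macro.)
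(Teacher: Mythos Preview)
Your argument for the first chain is correct and essentially the same as the paper's: both observe that maximal runs of $\tO$s and $\tL$s alternate, so the ``end-of-run'' positions (which are exactly the occurrences of $\tO\tL$ and $\tL\tO$) alternate, beginning with $\tO\tL$ since $\bt_0=\tO$.

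However, your opening plan contains an error you should remove: the Boolean complement does \emph{not} relate the two chains. Complementing swaps $\tO\tL\leftrightarrow\tL\tO$ and $\tO\tO\leftrightarrow\tL\tL$, so it sends the first chain (about $\tO\tL,\tL\tO$) to another statement about $\tL\tO,\tO\tL$ in $\overline{\bt}$, not to the second chain (about $\tL\tL,\tO\tO$). You seem to realise this, since you abandon the idea and give a different argument later; just delete the complement remark from the plan.

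Your eventual argument for the second chain is correct and is genuinely different from the paper's. The paper proves the second chain by a direct induction: from $a^{\tL\tL}_i=j$ it shows $\bt_{j+2}=\tO$, then splits on $\bt_{j+3}$ and, in the nontrivial case $\bt_{j+3}=\tL$, uses $j\bmod 4$ to locate the next $\tO\tO$ at $j+4$. Your route instead reduces the second chain to the first: since equal-letter pairs can only start at odd positions, and $\bt_{2n+1}\bt_{2n+2}=\overline{\bt_n}\,\bt_{n+1}$, the order-preserving map $n\mapsto 2n+1$ sends occurrences of $\tO\tL$ to occurrences of $\tL\tL$ and occurrences of $\tL\tO$ to occurrences of $\tO\tO$; the first chain then immediately yields the second. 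This is clean and is exactly the bijection the paper itself uses \emph{after} the lemma when reducing the case $w=\tO\tO$ to $w=\tL\tO$; you are simply using it earlier to prove the lemma itself, which avoids the case analysis. The paper's induction, by contrast, is self-contained and does not rely on having already established the first chain.

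Two presentation issues: the proposal is written as a stream of self-corrections (``is not how it works --- rather,\ldots'', ``I will fix the typo above''); rewrite it linearly with only the final argument. Also, \verb|\mapssto| is not a macro; you want \verb|\mapsto|.
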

\begin{proof}
First of all, $\bt$ begins with $\tO\tL\tL$, whence the first items of the two displayed chains of inequalities.
The first chain is almost trivial since after each block of consecutive $\tO$s, a letter $\tL$ follows, and vice versa.

Let us prove the second series of inequalities by induction.
Assume that
$a^{(\tL\tL)}_0<a^{(\tO\tO)}_0<\cdots<a^{(\tO\tO)}_{i-1}<a^{(\tL\tL)}_i=j$. Then $\bt_j\bt_{j+1}=\tL\tL$, and it follows that $\bt_{j+2}=\tO$, since $\tL\tL\tL$ is not a factor of $\bt$.
Two cases can occur.
{\inparaenum[(i)] \item If $\bt_{j+3}=\tO$, then clearly
$a^{(\tL\tL)}_i<a^{(\tO\tO)}_i=j+2$  by our hypothesis.
\item Otherwise, we have $\bt_j\bt_{j+1}\bt_{j+2}\bt_{j+3}=\tL\tL\tO\tL$.
Necessarily, $j$ is odd: if $j=2j'$, it would follow that $\bt_j\bt_{j+1}\in\{\tO\tL,\tL\tO\}$, but we need $\tL\tL$.
Moreover, $j\equiv 3\bmod 4$ is also not possible: Let $j+1=4j'$. Then $\bt_{j+1}\bt_{j+2}\bt_{j+3}\in\{\tO\tL\tL,\tL\tO\tO\}$, but we need $\tL\tO\tL$.
It follows that $j\equiv 1\bmod 4$, and therefore $\bt_{j+4}\bt_{j+5}=\tO\tO$, which implies $a^{(\tO\tO)}_i=j+4$.
}
By a completely analogous argument (reversing the roles of $\tL$ and $\tO$),
we may finish the proof of Lemma~\ref{lem_alternating} by induction.
\end{proof}

Let $w$ be a factor of $\bt$, of length $\ge3$.
Choose $k\geq 0$ minimal such that $w$ is a factor of some
$a^{xy}_k=\tau^k(x)\tau^k(y)$, where $x,y\in\{\tO,\tL\}$.
By minimality, $w$ is not a factor of $\tau^k(\tO)$ or $\tau^k(\tL)$, using~\eqref{eqn_block_pairs}.
Consequently, $w$ appears at most once in each $a^{xy}_k$.
Next, we need the fact that $\bt$ is \emph{overlap-free}~\cite{B1995,BRSV2006,T1912}, meaning that is does not contain a factor of the form $axaxa$, where $a\in\{\tO,\tL\}$ and $x\in\{\tO,\tL\}^\ast$.
We derive from this property that $w$ cannot occur simultaneously in both members of either of the pairs
\begin{equation}\label{eqn_pairs}
(a^{\tO\tO}_k,a^{\tO\tL}_k),\quad(a^{\tO\tO}_k,a^{\tL\tO}_k),\quad
(a^{\tL\tL}_k,a^{\tO\tL}_k),\quad(a^{\tL\tL}_k,a^{\tL\tO}_k).
\end{equation}
For example, assume that $w$ is a factor of both $a^{\tO\tO}_k$ and
$a^{\tO\tL}_k$.
By minimality, as we had before,
\[
\tau^k(\tO)\tau^k(\tO)=AwB,\quad
\tau^k(\tO)\tau^k(\tL)=A'wB',
\]
where $A$ and $A'$ are initial segments of $\tau^k(\tO)$, and $B$ resp. $B'$ are final segments of $\tau^k(\tO)$ resp. $\tau^k(\tL)$, and all of these segments are proper subwords of the respective words.
We have $A\neq A'$, since otherwise $\tau^k(\tO)=\tilde wB=\tilde wB'=\tau^k(\tL)$ for some $\tilde w$ that is not the empty word. This contradicts the fact that $\tau^k(\tO)=\overline{\tau^k(\tL)}$.
Let us, without loss of generality, assume that $\lvert A\rvert<\lvert A'\rvert$.
The first $2^k$ letters of $Aw$ and $A'w$ are equal, in symbols,
\begin{equation}\label{eqn_initial_equal}
(Aw)\bigr\rvert_{[0,2^k)}=(A'w)\bigr\rvert_{[0,2^k)}.
\end{equation}

We can therefore choose $a\in\{\tO,\tL\}$ and $w_1,w_2\in\{\tO,\tL\}^\ast$ in such a way that
$aw_1w_2=w$ and $Aaw_1=A'$.
Then trivially $Aw=Aaw_1w_2=A'w_2$, and since $\lvert A\rvert<2^k$, $\lvert A'\rvert<2^k$, it follows from~\eqref{eqn_initial_equal} that $w_2=aw_3$ for some $w_3\in\{\tO,\tL\}^\ast$.
Finally, the factor $A'w$ of $\bt$ can be written as $A'w=Aaw_1w=Aaw_1aw_1w_2=Aaw_1aw_1aw_3$, which contradicts the overlap-freeness of $\bt$.
The other three cases, corresponding to the second through fourth pairs in~\eqref{eqn_pairs}, are analogous.
We have therefore shown that the set of $A\in\{a^{\tO\tO}_k,a^{\tO\tL}_k,a^{\tL\tO}_k,a^{\tL\tL}_k\}$ such that $w$ is a factor of $A$ is a subset of either $\{a^{\tO\tL}_k,a^{\tL\tO}_k\}$ or $\{a^{\tO\tO}_k,a^{\tL\tL}_k\}$.

\bigskip\noindent\textbf{First case.}
Let $w$ be a factor of $a^{\tO\tL}_k$, or of $a^{\tL\tO}_k$.
Assume first that $w$ is a factor of $a^{\tO\tL}_k$, but not of $a^{\tL\tO}_k$.
In this case, we show that the gap sequence for $w$ is given by the gap sequence for $a^{\tO\tL}_k$:
{\inparaenum[(i)] \item each occurrence of $a^{\tO\tL}_k$ yields exactly one occurrence of $w$ (involving a constant shift); \item by~\eqref{eqn_concat}, every occurrence of $w$ takes place within a block of the form $a^{xy}_k$; \item only the block $a^{\tO\tL}_k$ is eligible.}
We prove that $a^{\tO\tL}_k$ appears exactly at positions $2^kj$ in $\bt$, where $\bt_j\bt_{j+1}=\tO\tL$. The easy direction follows from~\eqref{eqn_concat}: each occurrence of $\tO\tL$ yields an occurrence of $a^{\tO\tL}_k$, where the index has to be multiplied by $2^k$.
On the other hand, it is sufficient to show that $a^{\tO\tL}_k$ can only appear on positions $2^kj$. Given this, there is no admissible choice for $(\bt_j,\bt_{j+1})$ different from $(\tO,\tL)$, by \eqref{eqn_concat}.
Suppose that we already know this for some $k\ge0$ (the case $k=0$ being trivial).
Assume that
\begin{equation}\label{eqn_assumption}
a^{\tO\tL}_{k+1}=\tau^k(\tO)\tau^k(\tL)\tau^k(\tL)\tau^k(\tO)
\mbox{ appears on some position }\ell.
\end{equation}
Since $\tau^k(\tO)\tau^k(\tL)=a^{\tO\tL}_k$, we know by hypothesis that $\ell\equiv 0\bmod 2^k$.
Assume that the case $\ell\equiv 2^k\bmod 2^{k+1}$ occurs.
We set $\ell=(2j+1)2^k$ for some $j\geq 0$.
Our assumption~\eqref{eqn_assumption} implies $\tau^k(\tL)=\tau^k(\bt_{2j+2})=\tau^k(\bt_{j+1})$ and therefore $\bt_{j+1}=\tL$, which implies that $\tau^{k+1}(\bt_{j+1})=\tau^k(\tL)\tau^k(\tO)$ appears on position $\ell+2^k=(2j+2)2^k$ in $\bt$. This is incompatible with~\eqref{eqn_assumption}.
In particular, the gap sequence for $w$, which is identical to the gap sequence for $a^{\tO\tL}_k$, is given by $2^k\diffseq$, and therefore not automatic.
Switching the roles of $\tO$ and $\tL$ in this proof, we also obtain non-automaticity for the case that $w$ is a factor of $a_k^{\tL\tO}$, but not of $a_k^{\tO\tL}$ --- with the sequence $2^k\check\diffseq$ as the corresponding gap sequence.

Let $w$ be a factor of both $a^{\tO\tL}_k$ and $a^{\tL\tO}_k$.
In this case, each occurrence of $w$ in $\bt$ takes place within a subblock of $\bt$ of one of these two forms.
By Lemma~\ref{lem_alternating}, combined with the above argument that occurrences of $a^{\tO\tL}_k$ resp. $a^{\tL\tO}_k$ in $\bt$ take place at indices obtained from occurrences of $\tO\tL$ resp. $\tL\tO$, multiplied by $2^k$, these blocks occur alternatingly.
Assuming, in order to obtain a contradiction, that the gap sequence $(g_j)_{j\geq 0}$ for $w$ is automatic, we obtain a new automatic sequence $(g_{2j}+g_{2j+1})_{j\geq 0}$ as the sum of two automatic sequences (note that the characterisation involving the $2$-kernel~\eqref{eqn_k_kernel} immediately implies that $(g_{2j+\varepsilon})_{j\ge0}$, for $\varepsilon\in\{0,1\}$, is automatic).
By the alternating property, this is the gap sequence for $a^{\tO\tL}_k$, which is not automatic, as we have just seen. A contradiction!

\bigskip\noindent\textbf{Second case.}
Let $w$ be a factor of $a^{\tO\tO}_k$ or of $a^{\tL\tL}_k$.
This case is largely analogous.
We assume that $w$ be a factor of $a^{\tO\tO}_k$, but not of $a^{\tL\tL}_k$.
As in the case $a^{\tO\tL}_k$, the gap sequence for $w$ in this case is identical to the gap sequence for $a^{\tO\tO}_k$, and we only have to show that this sequence is not automatic.
We know already that the gap sequence for $\tO\tO$ is not automatic.
Therefore it suffices to prove that $\tau^k(\tO)\tau^k(\tO)$ can only appear at positions in $\bt$ divisible by $2^k$.
Suppose that we already know this for some $k\ge0$ (the case $k=0$ being again trivial).
Assume that
\begin{equation}\label{eqn_assumption2}
a^{\tO\tO}_{k+1}=\tau^k(\tO)\tau^k(\tL)\tau^k(\tO)\tau^k(\tL)
\mbox{ appears on some position }\ell.
\end{equation}
Since $\tau^k(\tO)\tau^k(\tL)=a^{\tO\tL}_k$, we know by hypothesis that $\ell\equiv 0\bmod 2^k$.
Assume that the case $\ell\equiv 2^k\bmod 2^{k+1}$ occurs.
We set $\ell=(2j+1)2^k$ for some $j\geq 0$.
Our assumption~\eqref{eqn_assumption2} implies
$\tau^k(\tL)=\tau^k(\bt_{2j+4})=\tau^k(\bt_{j+2})$ and therefore $\bt_{j+2}=\tL$, which implies that $\tau^{k+1}(\bt_{j+2})=\tau^k(\tL)\tau^k(\tO)$ appears on position $\ell+3\cdot 2^k=(2j+4)2^k$ in $\bt$.
On position $\ell$, we therefore see the factor
\[\tau^k(\tO)\tau^k(\tL)\tau^k(\tO)\tau^k(\tL)\tau^k(\tO),\]
which contradicts the overlap-freeness of $\bt$.

Again, the case that $w$ is a factor of $a^{\tL\tL}_k$, but not of $a^{\tO\tO}_k$, is analogous;
the case that it is a factor of both words can be handled as in the case $\{a^{\tO\tL}_k,a^{\tL\tO}_k\}$, this time with the help of the second chain of inequalities in~\eqref{eqn_chains}.

Summarising, we have shown the non-automaticity for all gap sequences for factors $w$ of $\bt$ of length $\geq 2$.

In order to finish the proof of Theorem~\ref{thm_main},
we still have to prove that the gap sequence is morphic for the `mixed cases'.
That is, assume that $w$ is a factor of two words of the form $a^{xy}_k$, where $x,y\in\{\tO,\tL\}$, and where $k$ is chosen minimal such that $w$ is a factor of at least one of $a^{\tO\tO}_k,a^{\tO\tL}_k,a^{\tL\tO}_k,a^{\tL\tL}_k$.
Let us begin with the case $\{a^{\tO\tL}_k,a^{\tL\tO}_k\}$.
The positions where $w$ appears in $\bt$ are given by $2^kj+\sigma_0$, where $\tb_j\tb_{j+1}=\tO\tL$, and $2^kj+\sigma_1$, where $\tb_j\tb_{j+1}=\tL\tO$. Here $\sigma_0,\sigma_1$ are the positions where the word $w$ appears in $a^{\tO\tL}_k$ and $a^{\tL\tO}_k$ respectively.
As before, this follows since $\bt_j\bt_{j+1}=\tO\tL$ is equivalent to $(\bt_\ell,\ldots,\bt_{\ell+2^{k+1}-1})=a^{\tO\tL}_k$, and the corresponding statement for $\tL\tO$. 
We see that it is sufficient to write $\bt$ as a concatenation of the words
\begin{equation}\label{eqn_w_choice_0110case}
w_{\ta}\coloneqq\tO\tL\tL,\quad w_{\tba}\coloneqq\tO\tL\tO,\quad w_{\tb}\coloneqq\tO\tL\tL\tO,\quad\mbox{and}\quad w_{\tc}=\tO\tL,
\end{equation}
since each word $w_x$ takes care of one $\tO\tL$-block, followed by one $\tL\tO$-block, and the gap sequence for $w$ is obtained by replacing each $w_x$ by a succession of two gaps. 
Applying the morphism $\tau$, we obtain
$\tau(w_{\ta})=w_{\ta}w_{\tba}$, $\tau(w_{\tba})=w_{\tb}w_{\tc}$, $\tau(w_{\tb})=w_{\ta}w_{\tba}w_{\tc}$, $\tau(w_{\tc})=w_{\tb}$.
This mimics the morphism $\psi$; proceeding as in the proof of Lemma~\ref{lem_correspondence} (alternatively, as in the proof of Lemma~\ref{lem_diffseq_substitution}), we obtain
\begin{equation}\label{eqn_w_concat}
\bt=w_{\bardiffseq_0}w_{\bardiffseq_1}w_{\bardiffseq_2}\cdots.
\end{equation}
Since $\bardiffseq$ is morphic, the succession of gaps with which $w$ occurs in $\bt$ is morphic by~\cite[Corollary~7.7.5]{AlloucheShallitBook} (that is, `morphic images of morphic sequences are morphic').

The case $\{a^{\tO\tO}_k,a^{\tL\tL}_k\}$ is similar.
Defining
\begin{equation*}
\tilde w_{\ta}\coloneqq\tO\tL\tL\tO\tL\tO,\quad
\tilde w_{\tba}\coloneqq\tO\tL\tL\tO\tO\tL,\quad
\tilde w_{\tb}\coloneqq\tO\tL\tL\tO\tL\tO\tO\tL,\quad\mbox{and}\quad
\tilde w_{\tc}\coloneqq\tO\tL\tL\tO,
\end{equation*}
it is straightforward to verify that
$\tau(\tilde w_{\ta})=\tilde w_{\ta}\tilde w_{\tba}$, $\tau(\tilde w_{\tba})=\tilde w_{\tb}\tilde w_{\tc}$, $\tau(\tilde w_{\tb})=\tilde w_{\ta}\tilde w_{\tba}\tilde w_{\tc}$, and $\tau(\tilde w_{\tc})=\tilde w_{\tb}$.
Again, we can spot the morphism $\psi$, and we obtain
\begin{equation}\label{eqn_w_concat_two}
\bt=\tilde w_{\bardiffseq_0}\tilde w_{\bardiffseq_1}\tilde w_{\bardiffseq_2}\cdots
\end{equation}
in exactly the same way as before.
Each of the words $w_x$ in this representation yields a block $\tL\tL$ in $\bt$, followed by a block $\tO\tO$.
Therefore, also in this case, the gap sequence for $w$ is a morphic image of a morphic sequence.
This finishes the proof of Theorem~\ref{thm_main}.
\qed

\begin{remark}
Let us have a closer look at the gaps in the `mixed case' $\{a^{\tO\tL}_k,a^{\tL\tO}_k\}$.
Let $\sigma_0$ be the index at which $w$ appears in $a^{\tO\tL}_k$, and $\sigma_1$ the index at which $w$ appears in $a^{\tL\tO}_k$.
By~\eqref{eqn_w_concat} and the choice~\eqref{eqn_w_choice_0110case}, each letter $x\in\{\ta,\tba,\tb,\tc\}$ in $\bardiffseq$ corresponds to two gaps, as follows.
\begin{equation}\label{eqn_bardiffseq_gap_correspondence_0110}
\begin{array}{l@{\hspace{2em}}l@{\hspace{2em}}l}
\mbox{Letter in $\bardiffseq$}&\mbox{Gap $1$}&\mbox{Gap $2$}\\
\ta&\sigma_1-\sigma_0+2^{k+1}&\sigma_0-\sigma_1+2^k\\
\tba&
\sigma_1-\sigma_0+2^k&\sigma_0-\sigma_1+2^{k+1}\\
\tb&
\sigma_1-\sigma_0+2^{k+1}&\sigma_0-\sigma_1+2^{k+1}\\
\tc&
\sigma_1-\sigma_0+2^k&\sigma_0-\sigma_1+2^k\\
\end{array}
\end{equation}
It follows that there are at most four gaps that can occur in this case.
For example, consider the gap sequence for the factor $w=\tO\tL\tO$.
In this case, $k=2$, and we have $a^{\tO\tL}_2=\tO\tL\tL\underline{\tO\tL\tO}\tO\tL$ and $a^{\tL\tO}_2=\tL\tO\underline{\tO\tL\tO}\tL\tL\tO$, where the occurrences of $w$ are underlined. We have $\sigma_0=3$ and $\sigma_1=2$.
This yields the gaps $3$, $5$, $7$, and $9$, occurring only in the combinations $(7,5)$, $(3,9)$, $(7,9)$, and $(3,5)$.
Noting also the first occurrence $\bt_3\bt_4\bt_5=\tO\tL\tO$,
the first few occurrences of $\tO\tL\tO$ in $\bt$ are at positions $3$, $10$, $15$, $18$, and $27$,
compare~\eqref{eqn_TM_32}.
In particular, the gap sequence is not of the form $2^{\ell}\diffseq$ or $2^{\ell}\check\diffseq$ for some $\ell\geq 0$, each of which has only three different values.

Similar considerations hold for the case $\{a^{\tO\tO}_k,a^{\tL\tL}_k\}$.
More precisely,
let $\sigma_0$ be the index at which $w$ appears in $a^{\tL\tL}_k$ and $\sigma_1$ the index at which $w$ appears in $a^{\tO\tO}_k$.
Each letter occurring in $\bardiffseq$ corresponds to two gaps for $w$, as follows.
\begin{equation}\label{eqn_bardiffseq_gap_correspondence_0011}
\begin{array}{l@{\hspace{2em}}l@{\hspace{2em}}l}
\mbox{Letter in $\bardiffseq$}&\mbox{Gap $1$}&\mbox{Gap $2$}\\
\ta&\sigma_1-\sigma_0+4\cdot 2^k&\sigma_0-\sigma_1+2\cdot 2^k\\
\tba&
\sigma_1-\sigma_0+2\cdot 2^k&\sigma_0-\sigma_1+4\cdot 2^k\\
\tb&
\sigma_1-\sigma_0+4\cdot 2^k&\sigma_0-\sigma_1+4\cdot 2^k\\
\tc&
\sigma_1-\sigma_0+2\cdot 2^k&\sigma_0-\sigma_1+2\cdot 2^k.
\end{array}
\end{equation}
An example for this case is given by the word $\tO\tO\tL\tL\tO$, which is a factor of $a^{\tO\tO}_2=\tO\tL\tL\tO\tO\tL\tL\tO$ and of $a^{\tL\tL}_2=\tL\tO\tO\tL\tL\tO\tO\tL$.
We have $\sigma_0=1$ and $\sigma_1=3$, and therefore the gaps $6$, $10$, $14$, and $18$, which appear as pairs $(18,6)$, $(10,14)$, $(18,14)$, and $(10,6)$.
\end{remark}

\section{The structure of the sequence $\mathbf A$}\label{sec_explore}
In this section, we investigate the infinite word $\autoseq$,
in particular by extending it to a word over a $7$-letter alphabet.
This extension allows us to better understand the structure of $\autoseq$,
and gives us a tool to handle the discrepancy $D_N$.
In particular, we prove Theorem~\ref{thm_discrepancy}.
\subsection{$\mathbf A$ is automatic}
It has been known since Berstel~\cite{B1979} that $\autoseq$ is $2$-automatic.
In this section, we re-prove this statement using slightly different notation.
Note that we had similar proofs (of Lemmas~\ref{lem_correspondence} and~\ref{lem_diffseq_substitution}) in the first part of this paper.
First of all, we recapture Berstel's $2$-uniform morphism.
Introducing an auxiliary letter $\tbb$, we have the morphism
$\overline\varphi$ as well as the coding $\pi$:
\begin{equation}\label{eqn_berstel}
\begin{array}{lllll}
\overline\varphi:&
\ta\mapsto \ta\tb,&
\tb\mapsto\tc\ta,&
\tbb\mapsto\ta\tc,&
\tc\mapsto\tc\tbb,
\\
\pi:&\ta\mapsto\ta,&\tb\mapsto\tb,&\tbb\mapsto\tb,&\tc\mapsto\tc.
\end{array}
\end{equation}
We wish to prove that
\begin{equation}\label{eqn_berstel_identity}
\pi(\barautoseq)=\autoseq,
\end{equation}
where
$\barautoseq$ is the fixed point of $\overline\varphi$ starting with $\ta$.
For this, we will show, by induction on $k\geq 0$,
that the initial segment
\[s_k\coloneqq\overline\varphi^k(\ta\tb\tc)\]
of $\barautoseq$, of length $3\cdot 2^k$,
is a concatenation of the three words
$w_0=\ta\tb\tc$, $w_1=\ta\tc$, and $w_2=\tbb$.
We also call the words $w_j$ `base words' in this context, and the latter statement `concatenation property'.
Having proved this property, we use (recall the morphism $\varphi$ defined in~\eqref{eqn_varphi_morphism})
\begin{align}\label{eqn_basewords_1}
\begin{array}{lll}
\pi\bigl(\overline\varphi(w_1)\bigr)
&=\ta\tb\tc\ta\tc\tb&=\varphi(\pi(w_1));\\
\pi\bigl(\overline\varphi(w_2)\bigr)
&=\ta\tb\tc\tb&=\varphi(\pi(w_1));\\
\pi\bigl(\overline\varphi(w_3)\bigr)
&=\ta\tc&=\varphi(\pi(w_3)),
\end{array}
\end{align}
in order to obtain
\begin{equation}\label{eqn_commutative}
\varphi\bigl(\pi(s_k)\bigr)
=\pi\bigl(\overline\varphi(s_k)\bigr)
\end{equation}
for all $k\geq 0$, by concatenation.
In other words, $\varphi$ and $\overline{\varphi}$ act in the same way on an initial segment of $\barautoseq$ of length $3\cdot 2^k$.

We may also display the relation~\eqref{eqn_commutative} graphically.
Define $S=\{s_k:k\geq 0\}\subseteq \{\ta,\tb,\tbb,\tc\}^{\mathbb N}$ and $\Omega=\{\ta,\tb,\tc\}^{\mathbb N}$.
Then the following diagram is commutative.
\begin{center}
\begin{tikzpicture}[xscale=2,yscale=1.5]
\node (la)	at (0,1)	{$S$};
\node (lb)	at (0,0)	{$S$};
\node (ra)	at (1,1)	{$\Omega$};
\node (rb)	at (1,0)	{$\Omega$};
\path[>=latex,->,font=\scriptsize]
(la)    edge node [auto]   {$\pi$}                 (ra)
(ra)    edge node [auto]   {$\varphi$}           (rb)
(la)    edge node [auto,swap]   {$\overline{\varphi}$}           (lb)
(lb)    edge node [auto,swap]   {$\pi$}                 (rb);
\end{tikzpicture}
\end{center}
Gluing together copies of this diagram, we obtain, for all $\ell\geq 1$,
\begin{align*}
\varphi^\ell\bigl(s_0\bigr)&=\varphi^\ell\bigl(\pi(s_0)\bigr)
=
\varphi^{\ell-1}\bigl(\pi(\overline\varphi(s_0))\bigr)
\\&=
\varphi^{\ell-2}\bigl(\varphi(\pi(\overline\varphi(s_0)))\bigr)
=
\varphi^{\ell-2}\bigl(\pi(\overline\varphi^2(s_0))\bigr)
=\cdots=\pi\bigl(\overline\varphi^\ell(s_0)\bigr).
\end{align*}
For each index $j\geq 0$, choose $\ell$ so large that $3\cdot 2^\ell\geq j$.
Then
\[\autoseq_i=\varphi^\ell(s_0)\bigr\rvert_i=\pi\bigl(\overline\varphi^\ell(s_0)\bigr)\bigr\rvert_i=
\pi\bigl(\overline\varphi^\ell(s_0)\bigr\rvert_i\bigr)=
\pi(\barautoseq_i)\]
for $0\leq i<j$.
Therefore the infinite word $\autoseq$ is $2$-automatic, being the coding under $\pi$ of the $2$-automatic sequence $\barautoseq$, and thus we have derived~\eqref{eqn_berstel_identity} from the concatenation property.

We still have to prove that $s_k$ is a concatenation of the base words.
Clearly, this holds for $s_0=\ta\tb\tc=w_0$.
Assume that we have already established that
$s_k=w_{\varepsilon_0}w_{\varepsilon_1}\cdots$
for some $\varepsilon_j\in\{0,1,2\}$.
We have
\begin{align*}
\overline\varphi(w_1)&=\ta\tb\tc\ta\tc\tbb=w_0w_1w_2,\\
\overline\varphi(w_2)&=\ta\tb\tc\tbb=w_0w_2,\\
\overline\varphi(w_3)&=\ta\tc=w_1,
\end{align*}
and thus
\[s_{k+1}=\overline\varphi(s_k)
=\overline\varphi(w_{\varepsilon_0})\overline\varphi(w_{\varepsilon_1})\cdots\]
is a concatenation of the $w_j$ too.
This proves~\eqref{eqn_berstel_identity}.

Complementing this result, we note that Berstel~\cite[Corollaire~7]{B1979} also proved that $\autoseq$ itself is not a fixed point of (the extension of) a uniform morphism.

\subsection{Transforming $\mathbf A$}
We will identify \emph{circular shifts}, or \emph{rotations}, of factors of length $L\geq 2$ appearing in the sequence $\autoseq$. Such a rotation of a word $(a_i)_{i\geq 0}$ replaces the subword $a_j\,a_{j+1}\cdots a_{j+L-2}\,a_{j+L-1}$ by $a_{j+1}\cdots a_{j+L-2}\,a_{j+L-1}\,a_j$ (rotation to the left), or $a_{j+L-1}\,a_j\,a_{j+1}\cdots a_{j+L-2}$ (rotation to the right), respectively.

Carrying out a certain number of such rotations, we will see that the sequence $\mathbf A$ is reduced to the periodic word $(\ta\tb\tc)^\omega$.
Of course, this is possible for any word containing an infinite number of each of $\ta$, $\tb$, and $\tc$, and it can be achieved in uncountably many ways.
In our case however, an admissible sequence of rotations can be made very explicit, by defining a new morphism $\varphi^+$.
This morphism has the fixed point $\barautoseq$, which maps to $\autoseq$ under a coding.
From this augmented sequence, we will see very clearly the `nested structure' of the above-mentioned rotations.
In particular, we can find a certain \emph{non-crossing matching}, defined in~\eqref{eqn_def_ncm}, describing the intervals that we perform rotations on, and the direction of each rotation.
Moreover, in the process we learn something about the discrepancy of $\tO\tL$-blocks in $\bt$, which was defined in~\eqref{eqn_def_discrepancy}.
Let us consider the iteration $\overline\varphi^2$ of Berstel's morphism:
\begin{equation}\label{eqn_berstelsquare}
\begin{array}{lllll}
\overline\varphi^2:&
\ta\mapsto \ta\tb\tc\ta,&
\tb\mapsto\tc\tbb\ta\tb,&
\tbb\mapsto\ta\tb\tc\tbb,&
\tc\mapsto\tc\tbb\ta\tc.
\end{array}
\end{equation}
We introduce certain decorations--- \emph{connectors} --- of the letters. Their meaning will become clear in a moment.
Based on the morphism $\overline\varphi^2$, we define the following decorated version, which is a morphism on the $7$-letter alphabet
\begin{equation}\label{eqn_K_def}
K=\{\ta,\Lbbleft,\Lbbright,\Lbleft,\Lbright,\Lcleft,\Lcright\}.
\end{equation}

\begin{equation}\label{eqn_decorated_substitution}
\varphi^+:
\begin{array}{r@{\hspace{0.3em}}l@{\hspace{1.5em}}r@{\hspace{0.3em}}l@{\hspace{1.5em}}r@{\hspace{0.3em}}l@{\hspace{1.5em}}r@{\hspace{0.3em}}l}
\ta&\mapsto
\begin{tikzpicture}[xscale=0.2,yscale=1,anchor=base, baseline]
\node[inner sep=1pt] (a0)	at (0,0)	{$\ta$};
\node[inner sep=1pt] (a1)	at (1,0)	{$\tb$};
\node[inner sep=1pt] (a2)	at (2,0)	{$\tc$};
\node[inner sep=1pt] (a3)	at (3,0)	{$\ta$};
\node[inner sep=1pt] (a4)	at (3.7,0)	{,};
\draw (a1.south)  -- ++(0,-0.1) --  ++(\clength,0);
\draw (a2.south)  -- ++(0,-0.1) --  ++(-\clength,0);
\end{tikzpicture}
&
\begin{tikzpicture}[xscale=0.2,yscale=1,anchor=base, baseline]
\node[inner sep=1pt] (a1)	at (0,0)	{$\tbb$};
\draw (a1.south)  -- ++(0,-0.1) --  ++(-\clength,0);
\end{tikzpicture}
&\mapsto
\begin{tikzpicture}[xscale=0.2,yscale=1,anchor=base, baseline]
\node[inner sep=1pt] (a0)	at (0,0)	{$\ta$};
\node[inner sep=1pt] (a1)	at (1,0)	{$\tb$};
\node[inner sep=1pt] (a2)	at (2,0)	{$\tc$};
\node[inner sep=1pt] (a3)	at (3,0)	{$\tbb$};
\node[inner sep=1pt] (a4)	at (3.7,0)	{,};
\draw (a1.south)  -- ++(0,-0.1) --  ++(-\clength,0);
\draw (a2.south)  -- ++(0,-0.1) --  ++(\clength,0);
\draw (a3.south)  -- ++(0,-0.1) --  ++(-\clength,0);
\end{tikzpicture}
&
\begin{tikzpicture}[xscale=0.2,yscale=1,anchor=base, baseline]
\node[inner sep=1pt] (a1)	at (0,0)	{$\tbb$};
\draw (a1.south)  -- ++(0,-0.1) --  ++(\clength,0);
\end{tikzpicture}
&\mapsto
\begin{tikzpicture}[xscale=0.2,yscale=1,anchor=base, baseline]
\node[inner sep=1pt] (a0)	at (0,0)	{$\ta$};
\node[inner sep=1pt] (a1)	at (1,0)	{$\tb$};
\node[inner sep=1pt] (a2)	at (2,0)	{$\tc$};
\node[inner sep=1pt] (a3)	at (3,0)	{$\tbb$};
\node[inner sep=1pt] (a4)	at (3.7,0)	{,};
\draw (a1.south)  -- ++(0,-0.1) --  ++(\clength,0);
\draw (a2.south)  -- ++(0,-0.1) --  ++(-\clength,0);
\draw (a3.south)  -- ++(0,-0.1) --  ++(\clength,0);
\end{tikzpicture}
\\
\begin{tikzpicture}[xscale=0.2,yscale=1,anchor=base, baseline]
\node[inner sep=1pt] (a1)	at (0,0)	{$\tb$};
\draw (a1.south)  -- ++(0,-0.1) --  ++(-\clength,0);
\end{tikzpicture}
&\mapsto
\begin{tikzpicture}[xscale=0.2,yscale=1,anchor=base, baseline]
\node[inner sep=1pt] (a0)	at (0,0)	{$\tc$};
\node[inner sep=1pt] (a1)	at (1,0)	{$\tbb$};
\node[inner sep=1pt] (a2)	at (2,0)	{$\ta$};
\node[inner sep=1pt] (a3)	at (3,0)	{$\tb$};
\node[inner sep=1pt] (a4)	at (3.7,0)	{,};
\draw (a0.south)  -- ++(0,-0.1) --  ++(\clength,0);
\draw (a1.south)  -- ++(0,-0.1) --  ++(-\clength,0);
\draw (a3.south)  -- ++(0,-0.1) --  ++(-\clength,0);
\end{tikzpicture}
&
\begin{tikzpicture}[xscale=0.2,yscale=1,anchor=base, baseline]
\node[inner sep=1pt] (a1)	at (0,0)	{$\tb$};
\draw (a1.south)  -- ++(0,-0.1) --  ++(\clength,0);
\end{tikzpicture}
&\mapsto
\begin{tikzpicture}[xscale=0.2,yscale=1,anchor=base, baseline]
\node[inner sep=1pt] (a0)	at (0,0)	{$\tc$};
\node[inner sep=1pt] (a1)	at (1,0)	{$\tbb$};
\node[inner sep=1pt] (a2)	at (2,0)	{$\ta$};
\node[inner sep=1pt] (a3)	at (3,0)	{$\tb$};
\node[inner sep=1pt] (a4)	at (3.7,0)	{,};
\draw (a0.south)  -- ++(0,-0.1) --  ++(\clength,0);
\draw (a1.south)  -- ++(0,-0.1) --  ++(-\clength,0);
\draw (a3.south)  -- ++(0,-0.1) --  ++(\clength,0);
\end{tikzpicture}
&
\begin{tikzpicture}[xscale=0.2,yscale=1,anchor=base, baseline]
\node[inner sep=1pt] (a1)	at (0,0)	{$\tc$};
\draw (a1.south)  -- ++(0,-0.1) --  ++(-\clength,0);
\end{tikzpicture}
&\mapsto
\begin{tikzpicture}[xscale=0.2,yscale=1,anchor=base, baseline]
\node[inner sep=1pt] (a0)	at (0,0)	{$\tc$};
\node[inner sep=1pt] (a1)	at (1,0)	{$\tbb$};
\node[inner sep=1pt] (a2)	at (2,0)	{$\ta$};
\node[inner sep=1pt] (a3)	at (3,0)	{$\tc$};
\node[inner sep=1pt] (a4)	at (3.7,0)	{,};
\draw (a0.south)  -- ++(0,-0.1) --  ++(-\clength,0);
\draw (a1.south)  -- ++(0,-0.1) --  ++(\clength,0);
\draw (a3.south)  -- ++(0,-0.1) --  ++(-\clength,0);
\end{tikzpicture}
&
\begin{tikzpicture}[xscale=0.2,yscale=1,anchor=base, baseline]
\node[inner sep=1pt] (a1)	at (0,0)	{$\tc$};
\draw (a1.south)  -- ++(0,-0.1) --  ++(\clength,0);
\end{tikzpicture}
&\mapsto
\begin{tikzpicture}[xscale=0.2,yscale=1,anchor=base, baseline]
\node[inner sep=1pt] (a0)	at (0,0)	{$\tc$};
\node[inner sep=1pt] (a1)	at (1,0)	{$\tbb$};
\node[inner sep=1pt] (a2)	at (2,0)	{$\ta$};
\node[inner sep=1pt] (a3)	at (3,0)	{$\tc$};
\node[inner sep=1pt] (a4)	at (3.7,0)	{.};
\draw (a0.south)  -- ++(0,-0.1) --  ++(\clength,0);
\draw (a1.south)  -- ++(0,-0.1) --  ++(-\clength,0);
\draw (a3.south)  -- ++(0,-0.1) --  ++(\clength,0);
\end{tikzpicture}
\end{array}
\end{equation}
This morphism has a unique fixed point $\autoseqplus$ starting with $\ta$.
The image of $\autoseqplus$ under the obvious coding $\gamma$ given by
\begin{equation}\label{eqn_big_coding}
\gamma:
\begin{array}{lllll}
&\ta\mapsto \ta,\\
&\Lbleft\mapsto\tb,&
\Lbright\mapsto\tb,&
\Lbbleft\mapsto\tb,&\Lbbright\mapsto\tb,\\
&\Lcleft\mapsto\tc,&\Lcright\mapsto\tc
\end{array}
\end{equation}
yields the sequence $\autoseq$.
Based on this, we will speak of \emph{letters of types} $\ta$, $\tb$, and $\tc$, thus referring to letters from $\{\ta\}$, $\{\Lbbleft,\Lbbright,\Lbleft,\Lbright\}$, and $\{\Lcleft,\Lcright\}$, respectively.

From the substitution~\eqref{eqn_decorated_substitution}, we can immediately derive the following lemma.
\begin{lemma}\label{lem_local}
Let $j\geq 1$, and $(x,y,z)=(\autoseqplus_{j-1},\autoseqplus_j,\autoseqplus_{j+1})$. Then
\begin{equation}\label{eqn_neighbourhood}
\begin{array}{rcl@{\hspace{3em}}rcl}
y=\Lbbleft&\Rightarrow& xyz=\Lcright\!\Lbbleft\ta;&
y=\Lbbright&\Rightarrow& xyz=\Lcleft\!\Lbbright\ta;\\
y=\Lbleft&\Rightarrow& xyz=\ta\Lbleft\!\Lcright;&
y=\Lbright&\Rightarrow& xyz=\ta\Lbright\!\Lcleft.
\end{array}
\end{equation}
\end{lemma}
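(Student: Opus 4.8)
The plan is to read everything off the block decomposition $\autoseqplus=\varphi^+(\autoseqplus_0)\,\varphi^+(\autoseqplus_1)\,\varphi^+(\autoseqplus_2)\cdots$ into words of length $4$. First I would spell out the seven images from~\eqref{eqn_decorated_substitution}, namely
$\varphi^+(\ta)=\ta\,\Lbright\,\Lcleft\,\ta$,
$\varphi^+(\Lbbleft)=\ta\,\Lbleft\,\Lcright\,\Lbbleft$,
$\varphi^+(\Lbbright)=\ta\,\Lbright\,\Lcleft\,\Lbbright$,
$\varphi^+(\Lbleft)=\Lcright\,\Lbbleft\,\ta\,\Lbleft$,
$\varphi^+(\Lbright)=\Lcright\,\Lbbleft\,\ta\,\Lbright$,
$\varphi^+(\Lcleft)=\Lcleft\,\Lbbright\,\ta\,\Lcleft$, and
$\varphi^+(\Lcright)=\Lcright\,\Lbbleft\,\ta\,\Lcright$,
and then record three facts visible from this list: (i) every image $\varphi^+(z)$ ends with the letter $z$; (ii) in every image the entries at positions $0$ and $2$ are of type $\ta$ or $\tc$, never of type $\tb$; (iii) the first entry of $\varphi^+(z)$ is $\ta$ when $z\in\{\ta,\Lbbleft,\Lbbright\}$, is $\Lcright$ when $z\in\{\Lbleft,\Lbright,\Lcright\}$, and is $\Lcleft$ when $z=\Lcleft$. (All seven letters of $K$ occur in $\autoseqplus$, as one checks by iterating $\varphi^+$ a few times.)

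Next, fix $j\ge1$ with $y=\autoseqplus_j$ of type $\tb$, i.e. $y\in\{\Lbbleft,\Lbbright,\Lbleft,\Lbright\}$. The letter $y$ lies in a unique block $\varphi^+(\autoseqplus_i)$ at offset $j-4i\in\{0,1,2,3\}$, and by (ii) this offset is $1$ or $3$. For the \emph{left} neighbour $\autoseqplus_{j-1}$ no recursion is needed: if the offset is $1$, then $\autoseqplus_{j-1}$ is the position-$0$ entry of the block, and inspecting the (at most three) images in which $y$ sits at position $1$ gives $\Lcright,\Lcleft,\ta,\ta$ for $y=\Lbbleft,\Lbbright,\Lbleft,\Lbright$ respectively; if the offset is $3$, then by (i) the block is $\varphi^+(y)$ and $\autoseqplus_{j-1}$ is its position-$2$ entry, which yields the same four values. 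In both cases this matches the first coordinate required in~\eqref{eqn_neighbourhood}.

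For the \emph{right} neighbour $\autoseqplus_{j+1}$ I would argue by strong induction on $j$. If the offset is $1$, then $\autoseqplus_{j+1}$ is the position-$2$ entry of the same block, which (again scanning the relevant images) is $\ta,\ta,\Lcright,\Lcleft$ for $y=\Lbbleft,\Lbbright,\Lbleft,\Lbright$, exactly as claimed. If the offset is $3$, then by (i) the block is $\varphi^+(y)$ with $\autoseqplus_i=y$; since $\autoseqplus$ begins with $\ta$ and $y$ is of type $\tb$ we have $i\ge1$, so the induction hypothesis applies at the index $i$ and tells us that $\autoseqplus_{i+1}$ equals $\ta$ if $y\in\{\Lbbleft,\Lbbright\}$, equals $\Lcright$ if $y=\Lbleft$, and equals $\Lcleft$ if $y=\Lbright$. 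As $\autoseqplus_{j+1}$ is the first entry of $\varphi^+(\autoseqplus_{i+1})$, fact (iii) converts these into $\ta,\Lcright,\Lcleft$ respectively, once more matching~\eqref{eqn_neighbourhood}. The base case is automatic, since the smallest $j$ with $\autoseqplus_j$ of type $\tb$ is $j=1$, which has offset $1$ and is handled non-recursively.

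The whole argument is bookkeeping; the only step that is not immediate is the right neighbour in the offset-$3$ case, where the neighbour a priori sits in the following block and one must identify the letter $\autoseqplus_{i+1}$ that follows the generating occurrence of $y$ — and that is precisely what the induction on $j$ delivers. I do not expect any real obstacle beyond correctly extracting the seven decorated images from the picture in~\eqref{eqn_decorated_substitution} and checking the handful of (image, position) cases.
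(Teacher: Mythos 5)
Your proof is correct and follows the route the paper intends: the paper derives the lemma ``immediately'' from the substitution~\eqref{eqn_decorated_substitution} with no further detail, and your block-decomposition argument --- including the small strong induction needed for the right neighbour when $y$ sits at offset $3$ of its block --- is exactly the bookkeeping that makes this precise. The seven decorated images you extracted agree with the definition of $\varphi^+$, and all of your (image, position) case checks are accurate.
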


We wish to connect the `loose ends' of the connectors ---  we say that two connectors at indices $i<j$ \emph{match} if the connector at $i$ points to the right and the connector at $j$ points to the left.
The very simple algorithm \texttt{FindMatching} joins matching connectors, beginning with shortest connections.
Only pairs of \emph{free} connectors are connected, that is, each letter may be the starting point of only one link.

\vspace{1em}

\noindent\texttt{procedure FindMatching(w):}

\noindent\hspace{1em}\texttt{M$\leftarrow$\string{\string};}

\noindent\hspace{1em}\texttt{SelectedIndices$\leftarrow$\string{\string};}

\noindent\hspace{1em}\texttt{n$\leftarrow$1;}

\noindent\hspace{1em}\texttt{while n < w.length:}

\smallskip\noindent\hspace{2em}\texttt{for all i }such that there are matching connectors at \texttt{i} and \texttt{i+n}:

\smallskip\noindent\hspace{3em}\texttt{if i} $\notin$ \texttt{SelectedIndices and i+n} $\notin$ \texttt{SelectedIndices:}

\smallskip\noindent\hspace{4em}Add the pair \texttt{(i,i+n)} to the set \texttt{M;}

\smallskip\noindent\hspace{4em}Add \texttt{i} and \texttt{i+n} to the set \texttt{SelectedIndices;}

\medskip\noindent\hspace{2em}\texttt{n$\leftarrow$n+1;}

\noindent\hspace{1em}\texttt{return M;}

\noindent\texttt{end.}
\begin{center}Algorithm \texttt{FindMatching}. Link free connectors
\end{center}
\vspace{3em}

Note that we have to pay attention that previously selected indices are not chosen again, whence the introduction of \texttt{SelectedIndices}.
A connection between the two letters at indices $i$ and $j$ is just a different name for the pair $(i,j)$.
For any finite word $w$ over the alphabet $K$
this procedure yields a (possibly empty) set $M(w)$ of pairs $(i,j)$ of indices.

We wish to prove that the algorithm is \emph{monotone}.
\begin{lemma}
Let $w$ and $w'$ be finite words over the alphabet $K$, and assume that
$w$ is an initial segment of $w'$.
Let $M(w)$ resp. $M(w')$ be the sets of pairs found by the algorithm
\textup{\texttt{FindMatching}}. Then
\begin{equation}\label{eqn_matching_inclusion}
M(w)\subseteq M(w').
\end{equation}
\end{lemma}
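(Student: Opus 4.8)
The plan is to establish a sharper, fully local statement: for any word $v$ over $K$ and any indices $i<j$ with $j<\lvert v\rvert$, whether $(i,j)$ belongs to the set produced by \texttt{FindMatching} on $v$ is determined by the prefix $v\bigr\rvert_{[0,j+1)}$ alone. Granting this, the lemma follows at once: every pair in $M(w)$ has both coordinates $<\lvert w\rvert$, and since $w$ is an initial segment of $w'$ the two words agree on their first $\lvert w\rvert\ge j+1$ letters, so $(i,j)\in M(w)$ if and only if $(i,j)\in M(w')$, giving $M(w)\subseteq M(w')$.

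Before the induction I would isolate the only feature of the alphabet $K$ that the argument needs: each letter carries at most one connector, of a fixed direction. Hence an index $i$ can be the \emph{left} endpoint of a matched pair only if its connector points right, and the \emph{right} endpoint only if it points left; in particular $i$ cannot be simultaneously the left endpoint of one matched pair and the right endpoint of another. I would also fix the order in which the inner loop processes eligible indices, say by increasing $i$, and note that for each fixed $n$ the eligible pairs $(i,i+n)$ for a word $v$ form the initial part of the eligible list of any longer word having $v$ as a prefix, because every additional eligible pair has right endpoint $\ge\lvert v\rvert$, hence strictly larger left endpoint.

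The heart of the proof is a strong induction on $n=j-i$. Assume $(i,j)$ has matching connectors, since otherwise it is never examined and non-eligibility is visibly a property of $v\bigr\rvert_{[0,j+1)}$. Then $(i,j)$ is put into $M(v)$ exactly when both $i$ and $j$ are still free at the moment $(i,j)$ is examined in iteration $n$. An endpoint in $\{i,j\}$ can be consumed at an earlier iteration only by a pair of the form $(i,i+m)$ or $(j-m,j)$ with $1\le m<n$ — the reversed shapes $(i-m,i)$ and $(j,j+m)$ are forbidden by the one-connector observation — and during iteration $n$ itself an earlier-processed pair touching $\{i,j\}$ could only be $(i-n,i)$, again impossible for the same reason. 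Every such potential blocker has distance $<n$ and both endpoints in $[0,j]$, so by the induction hypothesis its membership in $M(v)$ depends only on $v\bigr\rvert_{[0,j+1)}$; together with the connector test at positions $i$ and $j$ this shows that membership of $(i,j)$ in $M(v)$ depends only on $v\bigr\rvert_{[0,j+1)}$, closing the induction.

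The point that genuinely needs care — and the reason the slogan ``enlarging the word can only create matches'' is not self-evident — is the worry that appending letters might supply some index $i<\lvert w\rvert$ with a \emph{closer} partner located at a position $\ge\lvert w\rvert$, thereby pre-empting a link that \texttt{FindMatching} would have drawn inside $w$. This cannot occur: a partner at position $\ge\lvert w\rvert$ lies at distance $\ge\lvert w\rvert-i$ from $i$, whereas any pair of $M(w)$ through $i$ has distance $<\lvert w\rvert-i$ and is therefore examined at a strictly earlier iteration and succeeds. The bookkeeping with the prefix of length $j+1$, together with the one-connector-per-letter hypothesis, is precisely what converts this observation into the clean induction above; everything else is routine case-checking.
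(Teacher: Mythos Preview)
Your proof is correct and takes a genuinely different route from the paper's.

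The paper argues by induction on the length of $w$: it appends one letter $x$ at position $j$ and runs a nested inner induction on the link length $\ell$, maintaining the invariant that $M_\ell(w)\subseteq M_\ell(wx)$ and that the difference, if any, is the single pair $(\,\cdot\,,j)$ involving the new index. This tracks the algorithm's execution directly but is a bit fiddly, and it uses the one-connector-per-letter property only implicitly (to rule out pairs of the form $(\,\cdot\,,i)$ with $i$ already matched to $j$).

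You instead prove the stronger \emph{locality} statement that membership of $(i,j)$ in $M(v)$ depends only on $v\bigr\rvert_{[0,j+1)}$, via a single strong induction on $j-i$. The crucial observation --- that a right-pointing index can only ever be a left endpoint and vice versa, so the only possible blockers of $(i,j)$ are $(i,i+m)$ and $(j-m,j)$ with $m<n$, all of which live in the prefix of length $j+1$ --- is exactly the mechanism that makes the paper's argument work too, but you isolate it cleanly and make it do all the work at once. (Incidentally, the same observation shows that within a fixed iteration $n$ the eligible pairs are pairwise disjoint, so your care about the processing order, while harmless, is not strictly needed.) Your locality principle also immediately yields the paper's next lemma about passing to the limit $\tilde M(w)$, so it buys a little more than the bare inclusion.
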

\begin{proof}
We show this by induction on the length $j$ of $w$. Clearly this holds for $j=0$. Let us append a symbol $x\in K$ to $w$ (at position $j$).
Define $M_\ell(w)$ as the set of connections $(a,b)$ for $w$ of length strictly smaller than $\ell$, found by the algorithm. Define $M_\ell(wx)$ analogously.
We prove by induction on $\ell$ that $M_\ell(w)\subseteq M_\ell(wx)$, and that,
if the inclusion is strict, we have
$M_\ell(wx)=M_\ell(w)\cup\{(i,j)\}$ for some $i<j$.
Suppose that this is true for some $\ell$ (clearly it holds for $\ell=0$). We distinguish between two cases.
{\inparaenum[(i)] \item
If $(i,j)\not\in M_\ell(wx)$ for all $i$, we have $M_\ell(w)=M_\ell(wx)$ by hypothesis;
we add each pair $(a,b)$ with $b<j$ having matching connectors and such that $b-a=\ell$ to the sets $M_\ell(w)$ and $M_\ell(wx)$, and possibly one more pair $(i,j)$, for some $i<j$, to $M_\ell(wx)$.
 \item If $(i,j)\in M_\ell(wx)$ for some $i$, we have $\ell>j-i$ by the definition of $M_\ell(wx)$;
we add the pairs $(a,b)$, with $b<j$, having matching connectors and such that $a\neq i$ and $b-a=\ell$ to both sets $M_\ell(w)$ and $M_\ell(wx)$.
There are clearly no more pairs added to $M_\ell(wx)$, since $i$ and $j$ are already taken; moreover, the condition that $\ell>j-i$ renders impossible the chance of another connection $(i,b)$, where $b<j$, to be added to $M_\ell(w)$.
}
\end{proof}

We extend $M$ to a function on all (finite or infinite) words $w$ over $K$, in the following obvious way:
for each $\ell$, form the set $\tilde M_\ell(w)$ of all pairs $(a,b)$ satisfying $b-a=\ell$, having matching connectors, such that neither $a$ nor $b$ is a component of any $\tilde M_{\ell'}(w)$, where $\ell'<\ell$.
Set $\tilde M(w)=\bigcup_{\ell\geq 1}\tilde M_\ell(w)$.
The following lemma gives us a method to compute a matching for an infinite word by only looking at finite segments.
\begin{lemma}\label{lem_commutative}
Let $w$ be an infinite word over $K$.
Then
\begin{equation}\label{eqn_limit}
\bigcup_{j\geq 0}M\bigl(w\bigr\rvert_{[0,j)}\bigr)=\tilde M(w).
\end{equation}
\end{lemma}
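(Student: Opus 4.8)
The plan is to describe both greedy ``shortest link first'' procedures --- the finite one applied to longer and longer prefixes, and the infinite one producing $\tilde M(w)$ --- by one and the same recursion on the link length, and then match them up. Throughout, for a candidate link $(a,b)$ with matching connectors I write $\ell=b-a$ for its \emph{length}, and I set $M_\infty(w)\coloneqq\bigcup_{j\ge0}M\bigl(w\bigr\rvert_{[0,j)}\bigr)$. By the monotonicity lemma proved just above, this is an increasing union, so $M_\infty(w)$ is well defined and $(a,b)\in M_\infty(w)$ if and only if $(a,b)\in M\bigl(w\bigr\rvert_{[0,j)}\bigr)$ for all sufficiently large~$j$; in particular every link of $M_\infty(w)$ already appears in $M\bigl(w\bigr\rvert_{[0,j)}\bigr)$ for some~$j$.

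First I would record the one structural fact that drives everything: each letter of $K$ carries \emph{at most one} connector, and by definition a link joins a right-pointing connector (at its left end) to a left-pointing connector (at its right end). Hence no position can simultaneously be the left end of one candidate link and the right end of another candidate link \emph{of the same length}~$\ell$; consequently any two distinct candidate links of equal length are vertex-disjoint. This has two consequences. In the loop ``\texttt{for all i}'' at a fixed value $n=\ell$ of \texttt{FindMatching}, the order in which the indices $i$ are examined is irrelevant, because the candidates considered there never conflict with one another --- they only conflict with links already chosen at steps $n<\ell$ --- so the pairs actually added at step $\ell$ are exactly those $(i,i+\ell)$ with matching connectors both of whose endpoints are still free after the steps $n<\ell$. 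And, for the same reason, the set $\tilde M_\ell(w)$ is automatically a matching, so that $\tilde M(w)=\bigsqcup_{\ell\ge1}\tilde M_\ell(w)$ is a matching as well.

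Next I would establish two parallel ``local characterisations''. For the infinite procedure it is immediate from the definition of $\tilde M$ that a pair $(a,b)$ of length $\ell$ lies in $\tilde M(w)$ if and only if its connectors match and neither $a$ nor $b$ is an endpoint of a link of length $<\ell$ in $\tilde M(w)$. For the finite procedure, fix $j\ge b+1$: by the first observation above, running \texttt{FindMatching} on $w\bigr\rvert_{[0,j)}$ adds $(a,b)$ --- necessarily at step $n=\ell$ --- exactly when its connectors match and neither $a$ nor $b$ has been used by a link of length $<\ell$, that is, by a link of $M\bigl(w\bigr\rvert_{[0,j)}\bigr)$ of length $<\ell$. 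Taking the union over $j$ and invoking monotonicity (a short link touching $a$ or $b$ that appears for one prefix persists for all longer prefixes, hence lies in $M_\infty(w)$; conversely every link of $M_\infty(w)$ appears for some prefix) upgrades this to: $(a,b)\in M_\infty(w)$ if and only if its connectors match and neither $a$ nor $b$ is an endpoint of a link of length $<\ell$ in $M_\infty(w)$.

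Finally I would induct on the link length $\ell\ge1$. The two local characterisations have literally the same shape, and each one refers only to links of strictly smaller length; so if $M_\infty(w)$ and $\tilde M(w)$ agree on all links of length $<\ell$ (this is vacuous for $\ell=1$), then they agree on the links of length $\ell$ as well. Hence they contain the same links of every length, i.e.\ $M_\infty(w)=\tilde M(w)$, which is~\eqref{eqn_limit}. The one point that needs genuine care, and where I expect the main obstacle to lie, is the passage from the finite to the infinite procedure in the previous paragraph: one must be certain that appending further letters at the right end cannot retroactively ``steal'' an endpoint of an already-matched short link. This is precisely what the monotonicity lemma, together with the vertex-disjointness of equal-length candidate links, rules out; the remainder is routine bookkeeping.
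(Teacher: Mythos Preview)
Your proof is correct and follows essentially the same route as the paper: both argue by induction on the link length~$\ell$, showing that the links of length~$\ell$ selected by \texttt{FindMatching} on long enough prefixes coincide with~$\tilde M_\ell(w)$, and then take the union over~$\ell$.

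Your version is in fact more careful than the paper's in one respect. You make explicit the observation that distinct candidate links of the same length~$\ell$ are vertex-disjoint (since a position cannot carry both a right- and a left-pointing connector), and you use this to argue that the order of the inner \texttt{for all i} loop is irrelevant and that~$\tilde M_\ell(w)$ is automatically a partial matching. The paper's proof needs this fact too (otherwise the phrase ``a pair of matching connectors still free in step~$\ell$'' is ambiguous), but does not isolate it; your formulation of the two parallel ``local characterisations'' referring only to links of strictly smaller length is cleaner, and makes the induction step a one-line comparison.
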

\begin{proof}
Let $M_\ell(w)$ be the set of connections added in step $\ell$ of the algorithm \texttt{FindMatching}.
We prove, more generally, that
\begin{equation}\label{eqn_more_generally}
\bigcup_{j\geq 0}M_\ell\bigl(w\bigr\rvert_{[0,j)}\bigr)=\tilde M_\ell(w).
\end{equation}
We prove this by induction on $\ell$, and we start at connections of length $\ell=1$.
Let $(i,i+1)\in M_\ell\bigl(w\bigr\rvert_{[0,j)}\bigr)$.
Then there is a pair of matching connectors at indices $i$ and $i+1$ (where $i+1<j$),
and therefore this pair is also contained in $\tilde M_1(w)$.
This proves the inclusion ``$\subseteq$''.
On the other hand, if $(i,i+1)$ is a link connecting matching connectors in $w$,
this link is also to be found in the sequence $w\bigr\rvert_{[0,i+2)}$, hence the inclusion ``$\supseteq$''.
Assume that~\eqref{eqn_more_generally} holds for some $\ell\geq 1$.
If the algorithm finds a pair $(i,i+\ell)$ of matching connectors in $w\bigr\rvert_{[0,j)}$, where $(i,i+\ell)\not\in M_\ell\bigl(w\bigr\rvert_{[0,j)}\bigr)$, this pair trivially also matches in the (unrestricted) word $w$. By hypothesis, the connectors at $i$ and $i+\ell$ are not used by $\tilde M_\ell(w)$, hence the inclusion ``$\subseteq$''.
On the other hand, a link $(i,i+\ell)$ of matching connectors in $w$ that is still free in step $\ell$ is also free in $w\bigr\rvert_{[0,i+\ell+1)}$ by hypothesis, which proves~\eqref{eqn_more_generally} and hence the lemma.
\end{proof}

Our algorithm avoids crossing connections: if $i<j<k<\ell$ were indices such that $(i,k)\in M(w)$ and $(j,\ell)\in M(w)$, then the connector at index $j$ is pointing to the right, and the one at $k$ to the left, so the shorter connection $(j,k)$ would have been chosen earlier.
This contradicts the construction rule that indices may only be chosen once.

More generally, a \emph{non-crossing matching} for a word $w$ over $K$ (finite or infinite) is a set $M$ of pairs $(i,j)$ such that
\begin{align}
&i<j&&\mbox{for all }(i,j)\in M,\nonumber\\
&w_iw_j\in\{\Lbright\!\Lcleft,\Lbbright\!\Lcleft,\Lcright\!\Lbleft,\Lcright\!\Lbbleft\}&&\mbox{for all }(i,j)\in M,\nonumber\\
&w_i=a&&\mbox{for all }i\not\in\textstyle\bigcup M,\label{eqn_def_ncm}\\
&\left.\!\!\begin{array}{@{}ll}(i,j)=(k,\ell)&\mbox{\textsf{or} }\\
i<k<\ell<j&\mbox{\textsf{or}}\\
k<i<j<\ell
\end{array}\right\}
&&\mbox{for all }(i,j)\in M,\ (k,\ell)\in M\nonumber.
\end{align}
Here $\bigcup M=\{i:(i,j)\in M\mbox{ for some }j\mbox{ \textsf{or} }(j,i)\in M\mbox{ for some }j\}$.

We call a word $w$ \emph{closed} if there exists a non-crossing matching for $w$.

\begin{lemma}\label{lem_FindMatching_findsit}
Let $w$ be a word over $K$. There is at most one non-crossing matching for $w$.
If there exists one, \textup{\texttt{FindMatching}} generates it by virtue of~\eqref{eqn_limit}.
\end{lemma}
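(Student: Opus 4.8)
The plan is to reduce both assertions to a single claim: for \emph{every} non-crossing matching $M$ of $w$ one has $M=\tilde M(w)$, the set produced by the shortest-first pairing described in the paragraph preceding Lemma~\ref{lem_commutative}. Granting this, uniqueness is immediate, since any two non-crossing matchings $M,M'$ would both equal $\tilde M(w)$; and the second assertion follows because, for infinite $w$, Lemma~\ref{lem_commutative} identifies $\bigcup_{j\ge0}M\bigl(w\bigr\rvert_{[0,j)}\bigr)$ with $\tilde M(w)=M$, so \texttt{FindMatching} generates $M$ — while for finite $w$ one checks directly from the two constructions that $M(w)=\tilde M(w)$.

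The first step is to read off the structural constraints imposed by~\eqref{eqn_def_ncm}. Every letter of $K$ other than $\ta$ carries a single connector, pointing left (for $\Lbbleft,\Lbleft,\Lcleft$) or right (for $\Lbbright,\Lbright,\Lcright$). The third condition says that the indices not covered by $M$ are exactly those carrying the letter $\ta$, and the non-crossing condition rules out any index serving simultaneously as a left and a right endpoint; hence $M$ restricted to the non-$\ta$ positions of $w$ is a perfect matching. The second condition forces each $(i,j)\in M$ to satisfy $i<j$ with $w_i$ right-pointing and $w_j$ left-pointing. Only these facts about $M$ will be used; the colour compatibility encoded in the second condition is needed only to guarantee that $M$ exists, which here is an assumption.

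The core of the argument is an induction on $\ell\ge1$ showing
\[\tilde M_\ell(w)=\bigl\{(i,j)\in M:j-i=\ell\bigr\},\]
with induction hypothesis ``agreement for all lengths $<\ell$''. For the inclusion ``$\supseteq$'', if $(a,a+\ell)\in M$ then $w_a$ is right-pointing and $w_{a+\ell}$ left-pointing, so their connectors match; since in the perfect matching $M$ each of $a$ and $a+\ell$ lies in the single pair $(a,a+\ell)$, which has length $\ell$, the induction hypothesis shows that neither index is a component of any $\tilde M_{\ell'}(w)$ with $\ell'<\ell$, so $(a,a+\ell)$ satisfies the defining conditions of $\tilde M_\ell(w)$. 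For ``$\subseteq$'', let $(a,a+\ell)\in\tilde M_\ell(w)$; then $w_a$ is right-pointing, $w_{a+\ell}$ left-pointing, and — by the induction hypothesis — neither index lies in an $M$-pair of length $<\ell$. Let $b>a$ be the $M$-partner of $a$. If $b<a+\ell$ then $a$ lies in an $M$-pair of length $<\ell$, a contradiction; if $b>a+\ell$ then, since $M$ is non-crossing, the $M$-pair containing $a+\ell$ is nested strictly inside $(a,b)$, and because $w_{a+\ell}$ is left-pointing this pair has the form $(c,a+\ell)$ with $a<c<a+\ell$, hence of length $<\ell$, again a contradiction. Therefore $b=a+\ell$ and $(a,a+\ell)\in M$. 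Taking the union over $\ell$ gives $\tilde M(w)=M$.

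The step I expect to demand the most care is the bookkeeping in the inductive step — specifically the interplay between ``being a component of a shorter pair of $\tilde M$'' and ``of $M$'', which is precisely what the induction hypothesis is designed to bridge. By contrast, the one genuinely combinatorial input is trivial: a letter's connector has a single direction, so within a fixed length $\ell$ no two admissible pairs can share an endpoint and the shortest-first procedure never has to branch; this is exactly why $\tilde M$ and the output of \texttt{FindMatching} coincide via Lemma~\ref{lem_commutative}. I do not anticipate a real obstacle beyond organising these observations cleanly.
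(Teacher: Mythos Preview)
Your proposal is correct and follows essentially the same route as the paper: induct on the link length $\ell$ to show that any non-crossing matching $M$ of $w$ agrees with $\tilde M_\ell(w)$ at every length, hence $M=\tilde M(w)$, which gives both uniqueness and the identification with the output of \texttt{FindMatching} via Lemma~\ref{lem_commutative}. Your version is in fact a bit more explicit than the paper's in separating the two inclusions and in spelling out the nesting argument for the overshoot case $b>a+\ell$ (the paper compresses this into the phrase ``impossible by the non-crossing property'').
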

\begin{proof}
Let $m$ be a non-crossing matching of $w$.
Since all connectors have to connect to something and the connecting lines must not cross, we see that all pairs $(i,i+1)$ of indices where matching connectors appear have to be contained in $m$. It follows that $M_1\bigl(w\bigr\rvert_{[0,j)}\bigr)\subseteq m$ for all $j$, and therefore $\tilde M_1(w)\subseteq m$ by~\eqref{eqn_more_generally}.
On the other hand, the definition of a non-crossing matching only allows matching connectors, therefore each connection $(i,i+1)$ in $m$ is found by \texttt{FindMatching}, for $j=i+2$.

Similar reasoning applies for longer connections too. 
Let us assume that the set of connections of length $<\ell$ coming from \texttt{FindMatching} is the same as the set of connections of length $<\ell$ contained in $m$.
Assume that $i$ is an index such that the connectors at indices $i$ and $i+\ell$ match, and neither $i$ nor $i+\ell$ appears in a connection of length $<\ell$ in $m$.
Since $m$ is a matching, the connector at index $i$ has to be linked to a connector at an index $j>i$. Indices $j\in\{i+1,\ldots,i+\ell-1\}$ are excluded by our hypothesis, indices $j>i+\ell$ are impossible by the non-crossing property, therefore $(i,i+\ell)\in m$.
Again, other connections of length $\ell$ cannot appear in $m$, therefore \texttt{FindMatching} finds all pairs $(i,i+\ell)$ contained in $m$.
This completes our argument by induction.
Therefore $m=\tilde M(w)$, and both statements of Lemma~\ref{lem_FindMatching_findsit} follow.
\end{proof}

\begin{lemma}\label{lem_autoseq_closed}
The sequence $\autoseqplus$ is closed.
\end{lemma}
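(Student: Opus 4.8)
The plan is to prove, by induction on $k$, that every prefix $(\varphi^+)^k(\ta)$ of $\autoseqplus$ is closed, and then to pass to the limit $k\to\infty$ using the monotonicity property \eqref{eqn_matching_inclusion} of \texttt{FindMatching} together with Lemma~\ref{lem_FindMatching_findsit}. The heart of the argument is a ``fixed-point'' property of $\varphi^+$ with respect to the connectors. First I would read off from \eqref{eqn_decorated_substitution}, for each of the seven letters $x\in K$, which admissible pair occurs inside the length-$4$ block $\varphi^+(x)$ and which connector, if any, remains after removing it. One checks directly that $\varphi^+(\ta)=\ta\Lbright\Lcleft\ta$ closes up completely; that for $x\in\{\Lbright,\Lbbright,\Lcright\}$ the block $\varphi^+(x)$ contains one admissible pair and leaves the single leftover $x$ at its last position; that $\varphi^+(\Lcleft)=\Lcleft\Lbbright\ta\Lcleft$ leaves the leftover $\Lcleft$ at position~$0$; and that $\varphi^+(\Lbleft)=\Lcright\Lbbleft\ta\Lbleft$ and $\varphi^+(\Lbbleft)=\ta\Lbleft\Lcright\Lbbleft$ each leave a leftover $\Lbleft$, at position~$3$ and at position~$1$ respectively. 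The three facts I would extract from this table are: the leftover of $\varphi^+(x)$ points in the same direction as $x$; the internal pair of $\varphi^+(x)$ lies entirely to one side of the leftover position; and whenever $\{x,y\}$ is admissible, so is the pair formed by the leftovers of $\varphi^+(x)$ and $\varphi^+(y)$ --- the four admissible pairs $\Lbright\!\Lcleft$, $\Lbbright\!\Lcleft$, $\Lcright\!\Lbleft$, $\Lcright\!\Lbbleft$ produce the leftover pairs $\Lbright\!\Lcleft$, $\Lbbright\!\Lcleft$, $\Lcright\!\Lbleft$, $\Lcright\!\Lbleft$, all admissible.

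Granting this, I would show that $\varphi^+(w)$ is closed whenever $w=w_0\cdots w_{n-1}$ is. Given a non-crossing matching $m$ of $w$ --- which pairs every connector of $w$, so that each $i$ with $w_i\ne\ta$ occurs in a pair --- I would build a matching of $\varphi^+(w)=\varphi^+(w_0)\cdots\varphi^+(w_{n-1})$ out of all the internal pairs above, together with, for each $(i,j)\in m$, the pair joining the leftover of the block $\varphi^+(w_i)$ to the leftover of $\varphi^+(w_j)$ (both exist because $w_i,w_j\ne\ta$, and the first precedes the second because the block $\varphi^+(w_i)$ lies wholly before $\varphi^+(w_j)$). All pairs produced are admissible. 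For the non-crossing property: two internal pairs lie in distinct blocks or coincide; two lifted pairs cannot cross because $m$ is non-crossing and each block furnishes at most one leftover; and an internal pair of a block does not cross a lifted pair passing through that block's leftover, precisely because the internal pair lies entirely to one side of the leftover position. Every connector of $\varphi^+(w)$ gets matched --- the internal ones within their blocks, the leftovers through the lift of $m$ --- and every $\ta$-position stays unmatched, so $\varphi^+(w)$ is closed. Since the one-letter word $\ta$ is trivially closed, induction yields that $(\varphi^+)^k(\ta)$ is closed for all $k\ge0$.

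To finish, note that $\varphi^+(\ta)$ begins with $\ta$, so $(\varphi^+)^k(\ta)$ is a prefix of $(\varphi^+)^{k+1}(\ta)$ and of $\autoseqplus$. By Lemma~\ref{lem_FindMatching_findsit} the unique non-crossing matching $m_k$ of $(\varphi^+)^k(\ta)$ is the output of \texttt{FindMatching} on that word, and by \eqref{eqn_matching_inclusion} this output does not shrink when the word is extended, so $m_k\subseteq m_{k+1}$. Setting $m\coloneqq\bigcup_{k\ge0}m_k$, any two pairs of $m$ already lie in a common $m_K$, which is a non-crossing matching; hence $m$ consists of admissible pairs, uses each index at most once, and is non-crossing. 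Finally every connector of $\autoseqplus$ sits inside some prefix $(\varphi^+)^k(\ta)$ and is therefore matched in $m_k$ and in $m$, while $\ta$-positions are unmatched. Thus $m$ is a non-crossing matching for $\autoseqplus$, i.e.\ $\autoseqplus$ is closed.

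The one step I expect to be delicate is the per-letter bookkeeping in the first paragraph: one has to verify, directly from the seven images in \eqref{eqn_decorated_substitution}, that the internal matchings behave uniformly --- each leaving a leftover of the same kind and direction as the letter it came from, and placed so as not to obstruct the arcs of the ambient matching. Everything after that --- the induction and the limiting argument --- is a routine application of the already-established properties of \texttt{FindMatching}.
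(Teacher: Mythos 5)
Your proof is correct, and while it shares the paper's overall skeleton --- show that $\varphi^+$ maps closed words to closed words, deduce by induction that the prefixes $(\varphi^+)^k(\ta)$ are closed, and pass to the limit via the nested matchings $m_k\subseteq m_{k+1}$ justified through \texttt{FindMatching} --- it implements the central step by a genuinely different mechanism. The paper argues by structural induction on the closed word $w$ itself: a closed word is a concatenation of closed words and of bracketed words $\Lbbright C\hspace{-1pt}\Lcleft$ or $\Lcright C\Lbleft$ whose outermost pair spans the whole word; stripping that pair leaves a shorter closed word $C$, and one checks (as in~\eqref{eqn_preservation}) that $\varphi^+$ reproduces the bracket around $\varphi^+(C)$. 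You instead lift the matching directly: each block $\varphi^+(x)$ contributes its internal admissible pair, and each arc of $m$ is replaced by the arc joining the two leftovers, whose admissibility, direction, and non-obstructing position you verify letter by letter; your table of images and leftovers is accurate. What your route buys is an explicit description of the non-crossing matching of $\varphi^+(w)$ in terms of that of $w$ --- essentially the content of the Remark following the lemma about $\varphi^+$ respecting matchings --- and it avoids the case analysis of which bracketed forms can occur, for which the paper invokes Lemma~\ref{lem_local} even though that lemma concerns factors of $\autoseqplus$ rather than arbitrary closed words; the paper's route is merely shorter on the page. The only sub-case you leave implicit is a lifted arc passing over an entire block without touching its leftover, but such an arc nests all of that block's internal pairs, so nothing is lost.
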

\begin{proof}
First of all we note that it is sufficient to prove that $\varphi^+$ maps closed words $w$ to closed words.
If this is established, we obtain, by induction, that the initial segments $(\varphi^+)^k(\ta)$ of $\autoseqplus$ are closed.
Since non-crossing matchings are unique, the corresponding sequence $(m_k)_{k\geq 0}$ of non-crossing matchings satisfies $m_k\subseteq m_{k+1}$, and $\bigcup_{k\geq 0}m_k$ is easily seen to be the desired matching for $\autoseqplus$.

We prove by induction on the length $n$ of a closed word $w$ that $\varphi^+(w)$ is closed.
This is obvious for the closed words of length $n\le2$: the word $\varphi^+(\ta)=\ta\Lbright\!\Lcleft\ta$ is closed, and the cases $\Lbright\Lcleft$, $\Lbbright\Lcleft$, $\Lcright\Lbleft$, and $\Lcright\Lbbleft$ are also easy.
Moreover, a concatenation of two closed words is also closed: one of the matchings has to be shifted (both components of each entry have to be shifted), and we only have to form the union of the matchings.

If $w$ is of the form $\Lbbright C\hspace{-1pt}\Lcleft$ for some nonempty word $C$ over $K$,
we obtain a non-crossing matching for $C$ by stripping the pair $(1,n)$ from a corresponding matching for $w$. Therefore $C$ is closed.
Applying $\varphi^+$, we see that
\begin{equation}\label{eqn_preservation}
\varphi^+(w)=\ta\Lbright\!\Lcleft\!\Lbbright\varphi^+(C)\Lcleft\!\Lbbright\hspace{-0.5pt}\ta\hspace{-1pt}\Lcleft.\end{equation}
This is closed by our hypothesis, since $C$ is shorter than $w$.
The other case $\Lcright C\hspace{-0.0pt}\Lbleft$ is analogous (note that there are no more cases by~\eqref{eqn_neighbourhood}), and the proof is complete.
\end{proof}
\begin{remark}
We note that this proof can also be used to show that 
the substitution $\varphi^+$ \emph{respects} non-crossing matchings, in the following sense.
If $m$ is a non-crossing matching for $w$, then there exists a (unique) non-crossing matching $m'$ for $\varphi^+(w)$;
the matching $m$ can be recovered from $m'$ by omitting certain links, and applying a renaming $(i,j)\mapsto (\mu(i),\mu(j))$ to the remaining links, where $\mu:\mathbb N\rightarrow\mathbb N$ is nonincreasing.
The proof is not difficult: if this procedure works for the closed word $C$,
we can also carry this out for $\Lbbright C\hspace{-1pt}\Lcleft$ by~\eqref{eqn_preservation}; we see that the additional link $\Lbbright \cdots \Lcleft$ is still present in
$\varphi^+\bigl(\Lbbright C\hspace{-1pt}\Lcleft\bigr)$.
Also, the procedure of recovering $m'$ from $m$ is compatible with concatenations of closed words $C$ and $D$, as a matching for $\varphi^+(CD)=\varphi^+(C)\,\varphi^+(D)$ does not connect letters in $\varphi^+(C)$ and $\varphi^+(D)$.
\end{remark}

The construction of the matching in the proof of Lemma~\ref{lem_autoseq_closed} also shows the following result.
\begin{corollary}
Let $m$ be the non-crossing matching for $\autoseqplus$.
By virtue of $m$, each letter of type $\tc$ is connected to exactly one letter, which is of type $\tb$, and each letter of type $\tb$ is connected to exactly one letter, which is of type $\tc$.
\end{corollary}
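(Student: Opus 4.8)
The plan is to read the corollary off the defining properties \eqref{eqn_def_ncm} of a non-crossing matching, using only that $\autoseqplus$ is closed (Lemma~\ref{lem_autoseq_closed}); by uniqueness (Lemma~\ref{lem_FindMatching_findsit}) the matching $m$ in the statement is well defined, and the inductive construction in the proof of Lemma~\ref{lem_autoseq_closed} exhibits it concretely.

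First I would observe that every letter of $\autoseqplus$ of type $\tb$ or of type $\tc$ is matched by $m$: such a letter is different from $\ta$, and the requirement in \eqref{eqn_def_ncm} that every index outside $\bigcup m$ carry the letter $\ta$ forces every index holding a non-$\ta$ letter to be a component of some pair of $m$. Next I would check that it is a component of \emph{exactly} one pair. If an index $i$ lay in two distinct pairs $P,P'\in m$, then the non-crossing clause of \eqref{eqn_def_ncm}, applied to the ordered pair $(P,P')$, would force $P=P'$ or one of $P,P'$ to be strictly nested inside the other; the nesting alternatives require strict inequalities at all four endpoints and so are incompatible with $P$ and $P'$ sharing a component, while $P=P'$ contradicts $P\neq P'$. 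Hence each non-$\ta$ letter lies in a unique link of $m$.

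It then remains only to identify the type of the partner. By the clause in \eqref{eqn_def_ncm} requiring $\autoseqplus_i\autoseqplus_j\in\{\Lbright\!\Lcleft,\Lbbright\!\Lcleft,\Lcright\!\Lbleft,\Lcright\!\Lbbleft\}$ for every $(i,j)\in m$, each link of $m$ joins exactly one letter of type $\tb$ to one letter of type $\tc$, and involves no letter of type $\ta$. Combined with the previous paragraph this gives the corollary. Alternatively --- and in the spirit of the sentence introducing the corollary --- one may trace the construction in the proof of Lemma~\ref{lem_autoseq_closed}: every link adjoined there joins the two outer letters of a block $\Lbbright C\hspace{-1pt}\Lcleft$ or $\Lcright C\hspace{-1pt}\Lbleft$, hence a type-$\tb$ letter to a type-$\tc$ letter, while forming concatenations merely unites already-constructed matchings; since the resulting $m$ matches every non-$\ta$ letter of the closed sequence $\autoseqplus$, the claim follows. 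There is no real difficulty here; the only step worth a moment's care is the ``exactly one'' assertion, i.e.\ noticing that the non-crossing condition in \eqref{eqn_def_ncm} already forbids reusing an index.
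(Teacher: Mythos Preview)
Your proposal is correct. The paper gives no detailed argument at all: it simply states that the corollary follows from the construction of the matching in the proof of Lemma~\ref{lem_autoseq_closed}, which is exactly your second alternative.

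Your first argument, reading the statement directly off the axioms \eqref{eqn_def_ncm}, is a slightly different and cleaner route than what the paper gestures at. It has the advantage of not requiring one to revisit the inductive construction; the key observation you make---that the strict nesting inequalities in the non-crossing clause already forbid any index from appearing in two distinct pairs---is correct and is indeed the only point requiring a moment's thought. The paper's route, by contrast, relies on the reader recalling that every link adjoined during the induction joins the outermost letters of a block of the form $\Lbbright C\Lcleft$ or $\Lcright C\Lbleft$ (or the short base cases), so that the $\tb$--$\tc$ pairing is visible at each step; this is perhaps more concrete but also more work to verify. Either argument is adequate for such a short corollary.
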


Our interest in the link structure of $\autoseqplus$ stems from the fact that we may transform the sequence $\autoseq$ into a periodic one, using the following transparent mechanism.
Let $m$ be the non-crossing matching for $\autoseqplus$,
 and let $((i_k,j_k))_{k\geq 0}$ be an enumeration of $m$ such that $(j_k-i_k)_{k\geq 0}$ is nondecreasing.
We define a sequence $(A^{(k)})_{k\geq 0}$ as follows.
\begin{itemize}
\item
Set $A^{(0)}=\autoseqplus$.
\item
Let $k\geq 0$.
If $A^{(k)}_{i_k}=\Lcright$, we \emph{rotate} the letters in $A^{(k)}$ with indices $i_k,i_k+1,\ldots,j_k$ \emph{to the right} by one place, yielding $A^{(k+1)}$.
Otherwise, we necessarily have $A^{(k)}_{j_k}=\Lcleft$ and we rotate the letters with indices $i_k,i_k+1,\ldots,j_k-1$ \emph{to the left} by one place.
\end{itemize}
In more colourful language, in each step some letter of type $\tb$ is moved along its connecting link and inserted just before the letter of type $\tc$ it is connected to.
Note that due to the monotonicity requirement and the non-crossing property, the $k$th rotation does not change the indices at which the subsequent rotations are carried out.
Therefore the sequence $(A^{(k)})_{k\ge0}$ is well-defined.
Moreover, the result does not depend on the particular nondecreasing enumeration of $m$ for the same reasons.
Since the first $N$ indices eventually remain unchanged, the limit
\begin{equation}\label{eqn_A_def}
\rho(\autoseqplus)\coloneqq\gamma\bigl(\lim_{k\rightarrow\infty}A^{(k)}\bigr)
\end{equation}
exists (note that $\gamma$, defined in~\eqref{eqn_big_coding}, replaces each letter of type $x$ by $x$).
The definition of $A^{(k)}$ is summarised in the algorithm
\texttt{RotateAlongLinks}.
As in the case of the algorithm \texttt{FindMatching}, we require a finite word $w$ (and a finite set $m\subset \mathbb N^2$) as input in order to guarantee finite running time.
\vspace{1em}

\noindent\texttt{procedure RotateAlongLinks(w,m)}

\noindent\hspace{1em}\texttt{if m} is not a non-crossing matching for \texttt{w:}

\smallskip\noindent\hspace{2em}
\texttt{exit(\textrm{Error: a non-crossing matching is required});}

\noindent\hspace{1em}Create a list \texttt{m'} from \texttt{m}, ordered by \texttt{SecondComponent-FirstComponent}

\noindent\hspace{1em}\texttt{for p in m':}

\smallskip\noindent\hspace{2em}
\texttt{i$\leftarrow$p.FirstComponent;}

\noindent\hspace{2em}
\texttt{j$\leftarrow$p.SecondComponent;}

\noindent\hspace{2em}
\texttt{if w[i]=}$\Lcright$\texttt{:}

\noindent\hspace{3em}
\texttt{\#Rotate right}

\noindent\hspace{3em}
\texttt{(w[i],...,w[j-1],w[j])}$\leftarrow$\texttt{(w[j],w[i],...,w[j-1]);}

\noindent\hspace{2em}
\texttt{else:}

\noindent\hspace{3em}
\texttt{\#In this case, w[j]=}$\Lcleft$\texttt{.\ Rotate left}

\noindent\hspace{3em}
\texttt{(w[i],w[i+1],...,w[j-1])}$\leftarrow$\texttt{(w[i+1],...,w[j-1],w[i]);}

\noindent\hspace{1em}\texttt{return w;}

\noindent\texttt{end.}
\begin{center}Algorithm \texttt{RotateAlongLinks}. Transform a closed word according to a non-crossing matching
\end{center}
\vspace{1em}
By the above remarks,
the words \texttt{RotateAlongLinks}$\bigl(w_k,M(w_k)\bigr)$ converge
to $\rho(\autoseqplus)$ as $k\rightarrow\infty$, where $w=\bigl(\varphi^+\bigr)^k(\ta)$.
We have the following central proposition.
\begin{proposition}\label{prp_rotation}
Let $m$ be the non-crossing matching for $\autoseqplus$. Then
\begin{equation}\label{eqn_periodic}
\rho\bigl(\autoseqplus\bigr)=(\ta\tb\tc)^\omega.
\end{equation}
\end{proposition}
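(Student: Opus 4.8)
The plan is to show that performing all the rotations has a transparent effect: in the resulting word every letter of type $\tb$ comes to sit immediately before its partner of type $\tc$, while the letters of types $\ta$ and $\tc$ keep the order they have in $\autoseqplus$. Since, after coding, erasing the letters of type $\tb$ from $\autoseq$ leaves $(\ta\tc)^\omega$, re‑inserting them yields $(\ta\tb\tc)^\omega$.

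\textbf{The combinatorics of the rotations.} Let $w$ be a finite closed word and $m=\tilde M(w)$ its non‑crossing matching (unique by Lemma~\ref{lem_FindMatching_findsit}); by~\eqref{eqn_def_ncm} every letter outside $\bigcup m$ has type $\ta$, and, as noted in the corollary above, $m$ matches the letters of type $\tb$ and those of type $\tc$ to one another bijectively. Write $E(w)$ for the word obtained from $w$ by erasing all letters of type $\tb$. I claim that $\texttt{RotateAlongLinks}(w,m)$ is exactly the word obtained from $E(w)$ by inserting, immediately to the left of each letter of type $\tc$, the letter of type $\tb$ that $m$ matches to it. Two facts give this. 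First, a single rotation does not change the subsequence of letters of types $\ta$ and $\tc$: a right rotation of $[i,j]$ sends the type‑$\tc$ letter $w_i$ to position $i+1$, the type‑$\tb$ letter $w_j$ to position $i$, and translates $w_{i+1}\cdots w_{j-1}$ rigidly by one place, and a left rotation of $[i,j-1]$ sends the type‑$\tb$ letter $w_i$ to position $j-1$ and again translates $w_{i+1}\cdots w_{j-1}$ by one place; in either case the non‑type‑$\tb$ letters keep their relative order, so the non‑type‑$\tb$ subsequence of $\texttt{RotateAlongLinks}(w,m)$ equals $E(w)$. Second, after all rotations every type‑$\tc$ letter is immediately preceded by its $m$‑partner: the rotation associated with a link puts the type‑$\tb$ endpoint immediately before the type‑$\tc$ endpoint, and this is never undone, because — the matching being non‑crossing — the intervals $[i,j]$ with $(i,j)\in m$ form a laminar family, the rotations are carried out from the innermost interval outward, and a later rotation either leaves a completed adjacent pair untouched (disjoint interval) or translates it rigidly by one place (strictly enclosing interval), so adjacency, once achieved, persists. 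Since there are as many type‑$\tb$ as type‑$\tc$ letters and each type‑$\tc$ letter has at least one type‑$\tb$ letter immediately in front of it, each has exactly one, namely its partner, and no type‑$\tb$ letter is left over; this pins down $\texttt{RotateAlongLinks}(w,m)$ and proves the claim.

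\textbf{Conclusion.} Applying the coding $\gamma$ of~\eqref{eqn_big_coding} (which sends the four letters of type $\tb$ to $\tb$ and no other letter to $\tb$), the word $\gamma\bigl(\texttt{RotateAlongLinks}(w,m)\bigr)$ is obtained from $\gamma(w)$ by first deleting every letter $\tb$ and then replacing every remaining $\tc$ by $\tb\tc$. Take $w=w_k\coloneqq(\varphi^+)^k(\ta)$, which is closed by the proof of Lemma~\ref{lem_autoseq_closed}, with non‑crossing matching $m_k=M(w_k)$; since $\varphi^+$ is $4$‑uniform and $\autoseqplus$ is its fixed point, $w_k$ is the length‑$4^k$ prefix of $\autoseqplus$, so $\gamma(w_k)$ is the length‑$4^k$ prefix of $\autoseq$. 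Now deleting every letter $\tb$ from $\autoseq$ yields $(\ta\tc)^\omega$: as $\autoseq$ is the fixed point of the morphism $\varphi$ of~\eqref{eqn_varphi_morphism}, we have $\autoseq=\varphi(\autoseq_0)\varphi(\autoseq_1)\cdots$, and erasing the letters $\tb$ from $\varphi(\ta)=\ta\tb\tc$, $\varphi(\tb)=\ta\tc$, $\varphi(\tc)=\tb$ gives $\ta\tc$, $\ta\tc$, and the empty word respectively; since $\autoseq$ is aperiodic it has infinitely many letters $\ne\tc$, so the result is $\ta\tc\,\ta\tc\,\cdots=(\ta\tc)^\omega$. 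Replacing each $\tc$ by $\tb\tc$ turns $(\ta\tc)^\omega$ into $(\ta\tb\tc)^\omega$; hence $\gamma\bigl(\texttt{RotateAlongLinks}(w_k,m_k)\bigr)$ is the length‑$4^k$ prefix of $(\ta\tb\tc)^\omega$. Letting $k\to\infty$ and recalling from~\eqref{eqn_A_def} and the discussion preceding the proposition that $\rho(\autoseqplus)$ is the limit of the words $\gamma\bigl(\texttt{RotateAlongLinks}(w_k,m_k)\bigr)$, we obtain $\rho(\autoseqplus)=(\ta\tb\tc)^\omega$.

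\textbf{Main obstacle.} The delicate point is the second fact above: checking that the shortest‑link‑first order of the rotations, together with the laminar structure of $m$, prevents a later rotation from separating a pair of letters that an earlier rotation had already made adjacent — concretely, that a longer link's rotated interval contains an already‑processed shorter link entirely and carries its two (now adjacent) positions along rigidly rather than splitting them. The remaining ingredients are routine: the bookkeeping of the four shapes of links permitted by~\eqref{eqn_def_ncm} and Lemma~\ref{lem_local}, and the one‑line fixed‑point computation for $\autoseq$.
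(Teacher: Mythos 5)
Your proof is correct, and it follows the paper's overall reduction: both arguments first observe that $\rho$ acts by erasing the letters of type $\tb$ and re-inserting each one immediately before the letter of type $\tc$ it is matched to (your justification of this step, via the laminar structure of the matching, the shortest-link-first order, and the rigid translation of already-completed adjacent pairs under later rotations, is a more careful version of the three facts the paper merely asserts), so that everything comes down to showing that erasing all letters $\tb$ from $\autoseq$ leaves $(\ta\tc)^\omega$. It is in this last step that you genuinely diverge. The paper proves a stronger structural statement about the four-letter word $\barautoseq$, namely that it is a concatenation $\ta\,b(\varepsilon_0,\varepsilon'_0)\,b(\varepsilon_1,\varepsilon'_1)\cdots$ of blocks $b(\varepsilon,\varepsilon')=\tb\tc(\ta\tc)^{\varepsilon}\tbb\ta(\tc\ta)^{\varepsilon'}$, by explicitly computing $\overline\varphi^2$ on these four blocks and inducting. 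You instead exploit the fixed-point equation $\autoseq=\varphi(\autoseq_0)\varphi(\autoseq_1)\cdots$ directly: erasing $\tb$ from $\varphi(\ta)=\ta\tb\tc$, $\varphi(\tb)=\ta\tc$, $\varphi(\tc)=\tb$ yields $\ta\tc$, $\ta\tc$, and the empty word, and since infinitely many letters of $\autoseq$ differ from $\tc$ (squarefreeness already forbids $\tc\tc$, so the appeal to aperiodicity is not even needed), the erased word is $(\ta\tc)^\omega$. This is shorter and arguably cleaner; what the paper's route buys in exchange is the explicit decomposition of $\barautoseq$ as a reusable piece of structural information, whereas your argument yields exactly the alternation statement and nothing more. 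Both are complete proofs of the proposition.
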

\begin{proof}
Let us first note that the limit itself can be obtained in a simpler way.
For any closed word $C$ over $K$,
\begin{equation*}
\mbox{\inparaenum[(1)]\item apply $\gamma$,\hspace{0.8em}\item remove all occurrences of $\tb$,\hspace{0.8em} \item reinsert $\tb$ before each $\tc$.}
\end{equation*}
The resulting word equals $\rho(C)$.
This statement simply follows from the facts that
{\inparaenum[(i)]
\item both procedures do not change the order in which the underlying letters $\ta$ and $\tc$ appear, that \item each occurrence of $\tc$ in both results is preceded by $\tb$, and that \item in both results, $\tb$ does not appear at other places.
}
We therefore see that Proposition~\ref{prp_rotation} is equivalent to the following.
Let $\mathbf C$ be the sequence obtained from $\autoseqplus$ by deleting all decorations, and all occurrences of $\tb$ and $\tbb$.
Then $\mathbf C=(\ta\tc)^\omega$.
In other words, we only have to show that $\ta$ and $\tc$ occur alternatingly in $\autoseq$, with the empty word or one occurrence of $\tb$ in between.
We prove a stronger statement concerning the sequence $\barautoseq$, which will complete the proof of Proposition~\ref{prp_rotation}.
\begin{lemma}
There are sequences $(\varepsilon_k)_{k\geq 0}$ and $(\varepsilon'_k)_{k\geq 0}$ in $\{0,1\}$ such that
\[\barautoseq=\ta
\bigl(\tb\tc(\ta\tc)^{\varepsilon_0}\tbb\ta(\tc\ta)^{\varepsilon'_0}\bigr)
\bigl(\tb\tc(\ta\tc)^{\varepsilon_1}\tbb\ta(\tc\ta)^{\varepsilon'_1}\bigr)
\cdots.
\]
\end{lemma}
In order to prove this, we apply the second iteration $\overline\varphi^2$ of Berstel's morphism on one of the expressions in brackets. We use the abbreviation
\[b(\varepsilon,\varepsilon')=\tb\tc(\ta\tc)^\varepsilon\tbb\ta(\tc\ta)^{\varepsilon'}.\]
Direct computation yields
\begin{multline*}
\begin{aligned}
\overline\varphi^2(b(0,0))
&=\tc\tbb\ta\tb\tc\tbb\ta\tc\ta\tb\tc\tbb\ta\tb\tc\ta
=\tc\tbb\ta\,b(0,1)b(0,0)\tb\tc\ta,\\
\overline\varphi^2(b(0,1))
&=\tc\tbb\ta\tb\tc\tbb\ta\tc\ta\tb\tc\tbb\ta\tb\tc\ta\tc\tbb\ta\tc\ta\tb\tc\ta
=\tc\tbb\ta\,b(0,1)b(0,0)b(1,1)\tb\tc\ta,\\
\overline\varphi^2(b(1,0))
&=\tc\tbb\ta \tb\tc\tbb\ta\tc\ta\tb\tc\ta\tc\tbb\ta\tc\ta\tb\tc\tbb\ta\tb\tc\ta
=\tc\tbb\ta\,b(0,1)b(1,1)b(0,0)\tb\tc\ta,\\
\overline\varphi^2(b(1,1))
&=\tc\tbb\ta \tb\tc\tbb\ta\tc\ta\tb\tc\ta\tc\tbb\ta\tc\ta\tb\tc\tbb\ta\tb\tc\ta\tc\tbb\ta\tc\ta\tb\tc\ta
\end{aligned}\\
=\tc\tbb\ta\,b(0,1)b(1,1)b(0,0)b(1,1)\tb\tc\ta.
\end{multline*}
Arbitrary concatenations of these expressions are again of the form $\tc\tbb\ta R\,\tb\tc\ta$, where $R$ is a concatenation of words $b(\varepsilon,\varepsilon')$.
Assuming that $w$ is of the form $\ta\prod_{j<r}b(\varepsilon_j,\varepsilon'_j)\tb\tc\ta$, we obtain
$\overline\varphi^2(w)=\ta\,b(1,0)R\,\tb\tc\ta$.
Since the words $\bigl(\overline\varphi^2\bigr)^k(\ta)$ approach a fixed point,
and
\[\overline\varphi^4(\ta)=\ta\,b(1,0)b(0,1)\tb\tc\ta,\]
it follows by induction that $\barautoseq$ is indeed of the form stated in the lemma, and we have in particular proved Proposition~\ref{prp_rotation}.
\end{proof}
From this algorithm, we can clearly see that a given letter $\ta$ is shifted, one place at a time, for each link that is passing over this letter. The direction in which $\ta$ is shifted depends on whether $\Lcright$ or $\Lcleft$ appears in the link we are dealing with.
We will use considerations of this kind in the following section,
²together with Proposition~\ref{prp_rotation}, in order to determine the \emph{discrepancy} of $\tO\tL$-occurrences in $\bt$.
\subsection{The discrepancy of $\tO\tL$-blocks}
For an integer $j\geq 0$
let us define the \emph{degree} of $j$ as follows. Let $m$ be the non-crossing matching for $\autoseqplus$ and set
\begin{equation}\label{eqn_def_degree}
\begin{aligned}
\deg^+(j)&=\#\bigl\{
(k,\ell)\in m: k<j<\ell\mbox{\textsf{ and }}\autoseqplus_k=\Lcright\bigr\},\\
\deg^-(j)&=\#\bigl\{
(k,\ell)\in m: k<j<\ell\mbox{\textsf{ and }}\autoseqplus_\ell=\Lcleft\bigr\},\\
\deg(j)&=\deg^+(j)-\deg^-(j).
\end{aligned}
\end{equation}
We will also talk about the degree of a  letter in $\autoseqplus$, where the position in question will always be clear from the context.

We display the first $192$ letters of $\autoseqplus$, obtained by applying the third iteration of $\varphi^+$ on the word $\autoseqplus_0\autoseqplus_1\autoseqplus_2=\ta\Lbright\!\Lcleft$,
and we connect associated connectors by actual lines for better readability.
On position $10=(\tZ\tZ)_4$ in~$\autoseqplus$, we have a letter $\ta$ of degree $-1$, and on position $170=(\tZ\tZ\tZ\tZ)_4$, a letter $\ta$ of degree $-2$.
These positions are marked with an arrow.
\begin{equation}\label{eqn_192_letters}
\begin{aligned}
&\begin{tikzpicture}[xscale=0.22,yscale=1,anchor=base,inner sep=1pt]
\node (a0)	at (0,0)	{$\ta$};
\node (a1)	at (1,0)	{$\tb$};
\node (a2)	at (2,0)	{$\tc$};
\node (a3)	at (3,0)	{$\ta$};
\node (a4)	at (4,0)	{$\tc$};
\node (a5)	at (5,0)	{$\tbb$};
\node (a6)	at (6,0)	{$\ta$};
\node (a7)	at (7,0)	{$\tb$};
\node (a8)	at (8,0)	{$\tc$};
\node (a9)	at (9,0)	{$\tbb$};
\node (a10)	at (10,0)	{$\ta$};
\node (a11)	at (11,0)	{$\tc$};
\node (a12)	at (12,0)	{$\ta$};
\node (a13)	at (13,0)	{$\tb$};
\node (a14)	at (14,0)	{$\tc$};
\node (a15)	at (15,0)	{$\ta$};
\node (a16)	at (16,0)	{$\tc$};
\node (a17)	at (17,0)	{$\tbb$};
\node (a18)	at (18,0)	{$\ta$};
\node (a19)	at (19,0)	{$\tc$};
\node (a20)	at (20,0)	{$\ta$};
\node (a21)	at (21,0)	{$\tb$};
\node (a22)	at (22,0)	{$\tc$};
\node (a23)	at (23,0)	{$\tbb$};
\node (a24)	at (24,0)	{$\ta$};
\node (a25)	at (25,0)	{$\tb$};
\node (a26)	at (26,0)	{$\tc$};
\node (a27)	at (27,0)	{$\ta$};
\node (a28)	at (28,0)	{$\tc$};
\node (a29)	at (29,0)	{$\tbb$};
\node (a30)	at (30,0)	{$\ta$};
\node (a31)	at (31,0)	{$\tb$};
\node (a32)	at (32,0)	{$\tc$};
\node (a33)	at (33,0)	{$\tbb$};
\node (a34)	at (34,0)	{$\ta$};
\node (a35)	at (35,0)	{$\tc$};
\node (a36)	at (36,0)	{$\ta$};
\node (a37)	at (37,0)	{$\tb$};
\node (a38)	at (38,0)	{$\tc$};
\node (a39)	at (39,0)	{$\tbb$};
\node (a40)	at (40,0)	{$\ta$};
\node (a41)	at (41,0)	{$\tb$};
\node (a42)	at (42,0)	{$\tc$};
\node (a43)	at (43,0)	{$\ta$};
\node (a44)	at (44,0)	{$\tc$};
\node (a45)	at (45,0)	{$\tbb$};
\node (a46)	at (46,0)	{$\ta$};
\node (a47)	at (47,0)	{$\tc$};
\draw (a1.south)  -- ++(0,-0.1) --  ++(1,0) --  (a2);
\draw (a4.south)  -- ++(0,-0.1) --  ++(1,0) --  (a5);
\draw (a7.south)  -- ++(0,-0.1) --  ++(1,0) --  (a8);
\draw (a9.south)  -- ++(0,-0.1) --  ++(2,0) --  (a11);
\draw[>=latex,->] (a10.north)++(0,0.3)  -- ++(0,-0.3);
\draw (a13.south)  -- ++(0,-0.1) --  ++(1,0) --  (a14);
\draw (a16.south)  -- ++(0,-0.1) --  ++(1,0) --  (a17);
\draw (a19.south)  -- ++(0,-0.1) --  ++(2,0) --  (a21);
\draw (a22.south)  -- ++(0,-0.1) --  ++(1,0) --  (a23);
\draw (a25.south)  -- ++(0,-0.1) --  ++(1,0) --  (a26);
\draw (a28.south)  -- ++(0,-0.1) --  ++(1,0) --  (a29);
\draw (a31.south)  -- ++(0,-0.1) --  ++(1,0) --  (a32);
\draw (a33.south)  -- ++(0,-0.1) --  ++(2,0) --  (a35);
\draw (a37.south)  -- ++(0,-0.1) --  ++(1,0) --  (a38);
\draw (a39.south)  -- ++(0,-0.2) --  ++(5,0) --  (a44);
\draw (a41.south)  -- ++(0,-0.1) --  ++(1,0) --  (a42);
\draw (a45.south)  -- ++(0,-0.1) --  ++(2,0) --  (a47);
\end{tikzpicture}
\\&
\begin{tikzpicture}[xscale=0.22,yscale=1,anchor=base, inner sep=1pt]
\node (a0)	at (0,0)	{$\ta$};
\node (a1)	at (1,0)	{$\tb$};
\node (a2)	at (2,0)	{$\tc$};
\node (a3)	at (3,0)	{$\ta$};
\node (a4)	at (4,0)	{$\tc$};
\node (a5)	at (5,0)	{$\tbb$};
\node (a6)	at (6,0)	{$\ta$};
\node (a7)	at (7,0)	{$\tb$};
\node (a8)	at (8,0)	{$\tc$};
\node (a9)	at (9,0)	{$\tbb$};
\node (a10)	at (10,0)	{$\ta$};
\node (a11)	at (11,0)	{$\tc$};
\node (a12)	at (12,0)	{$\ta$};
\node (a13)	at (13,0)	{$\tb$};
\node (a14)	at (14,0)	{$\tc$};
\node (a15)	at (15,0)	{$\ta$};
\node (a16)	at (16,0)	{$\tc$};
\node (a17)	at (17,0)	{$\tbb$};
\node (a18)	at (18,0)	{$\ta$};
\node (a19)	at (19,0)	{$\tc$};
\node (a20)	at (20,0)	{$\ta$};
\node (a21)	at (21,0)	{$\tb$};
\node (a22)	at (22,0)	{$\tc$};
\node (a23)	at (23,0)	{$\tbb$};
\node (a24)	at (24,0)	{$\ta$};
\node (a25)	at (25,0)	{$\tb$};
\node (a26)	at (26,0)	{$\tc$};
\node (a27)	at (27,0)	{$\ta$};
\node (a28)	at (28,0)	{$\tc$};
\node (a29)	at (29,0)	{$\tbb$};
\node (a30)	at (30,0)	{$\ta$};
\node (a31)	at (31,0)	{$\tc$};
\node (a32)	at (32,0)	{$\ta$};
\node (a33)	at (33,0)	{$\tb$};
\node (a34)	at (34,0)	{$\tc$};
\node (a35)	at (35,0)	{$\ta$};
\node (a36)	at (36,0)	{$\tc$};
\node (a37)	at (37,0)	{$\tbb$};
\node (a38)	at (38,0)	{$\ta$};
\node (a39)	at (39,0)	{$\tb$};
\node (a40)	at (40,0)	{$\tc$};
\node (a41)	at (41,0)	{$\tbb$};
\node (a42)	at (42,0)	{$\ta$};
\node (a43)	at (43,0)	{$\tc$};
\node (a44)	at (44,0)	{$\ta$};
\node (a45)	at (45,0)	{$\tb$};
\node (a46)	at (46,0)	{$\tc$};
\node (a47)	at (47,0)	{$\tbb$};
\draw (a1.south)  -- ++(0,-0.1) --  ++(1,0) --  (a2);
\draw (a4.south)  -- ++(0,-0.1) --  ++(1,0) --  (a5);
\draw (a7.south)  -- ++(0,-0.1) --  ++(1,0) --  (a8);
\draw (a9.south)  -- ++(0,-0.1) --  ++(2,0) --  (a11);
\draw (a13.south)  -- ++(0,-0.1) --  ++(1,0) --  (a14);
\draw (a16.south)  -- ++(0,-0.1) --  ++(1,0) --  (a17);
\draw (a19.south)  -- ++(0,-0.1) --  ++(2,0) --  (a21);
\draw (a22.south)  -- ++(0,-0.1) --  ++(1,0) --  (a23);
\draw (a25.south)  -- ++(0,-0.1) --  ++(1,0) --  (a26);
\draw (a28.south)  -- ++(0,-0.1) --  ++(1,0) --  (a29);
\draw (a31.south)  -- ++(0,-0.2) --  ++(8,0) --  (a39);
\draw (a33.south)  -- ++(0,-0.1) --  ++(1,0) --  (a34);
\draw (a36.south)  -- ++(0,-0.1) --  ++(1,0) --  (a37);
\draw (a40.south)  -- ++(0,-0.1) --  ++(1,0) --  (a41);
\draw (a43.south)  -- ++(0,-0.1) --  ++(2,0) --  (a45);
\draw (a46.south)  -- ++(0,-0.1) --  ++(1,0) --  (a47);
\end{tikzpicture}
\\
&\begin{tikzpicture}[xscale=0.22,yscale=1,anchor=base, inner sep=1pt]
\node (a0)	at (0,0)	{$\ta$};
\node (a1)	at (1,0)	{$\tb$};
\node (a2)	at (2,0)	{$\tc$};
\node (a3)	at (3,0)	{$\ta$};
\node (a4)	at (4,0)	{$\tc$};
\node (a5)	at (5,0)	{$\tbb$};
\node (a6)	at (6,0)	{$\ta$};
\node (a7)	at (7,0)	{$\tb$};
\node (a8)	at (8,0)	{$\tc$};
\node (a9)	at (9,0)	{$\tbb$};
\node (a10)	at (10,0)	{$\ta$};
\node (a11)	at (11,0)	{$\tc$};
\node (a12)	at (12,0)	{$\ta$};
\node (a13)	at (13,0)	{$\tb$};
\node (a14)	at (14,0)	{$\tc$};
\node (a15)	at (15,0)	{$\ta$};
\node (a16)	at (16,0)	{$\tc$};
\node (a17)	at (17,0)	{$\tbb$};
\node (a18)	at (18,0)	{$\ta$};
\node (a19)	at (19,0)	{$\tc$};
\node (a20)	at (20,0)	{$\ta$};
\node (a21)	at (21,0)	{$\tb$};
\node (a22)	at (22,0)	{$\tc$};
\node (a23)	at (23,0)	{$\tbb$};
\node (a24)	at (24,0)	{$\ta$};
\node (a25)	at (25,0)	{$\tb$};
\node (a26)	at (26,0)	{$\tc$};
\node (a27)	at (27,0)	{$\ta$};
\node (a28)	at (28,0)	{$\tc$};
\node (a29)	at (29,0)	{$\tbb$};
\node (a30)	at (30,0)	{$\ta$};
\node (a31)	at (31,0)	{$\tb$};
\node (a32)	at (32,0)	{$\tc$};
\node (a33)	at (33,0)	{$\tbb$};
\node (a34)	at (34,0)	{$\ta$};
\node (a35)	at (35,0)	{$\tc$};
\node (a36)	at (36,0)	{$\ta$};
\node (a37)	at (37,0)	{$\tb$};
\node (a38)	at (38,0)	{$\tc$};
\node (a39)	at (39,0)	{$\tbb$};
\node (a40)	at (40,0)	{$\ta$};
\node (a41)	at (41,0)	{$\tb$};
\node (a42)	at (42,0)	{$\tc$};
\node (a43)	at (43,0)	{$\ta$};
\node (a44)	at (44,0)	{$\tc$};
\node (a45)	at (45,0)	{$\tbb$};
\node (a46)	at (46,0)	{$\ta$};
\node (a47)	at (47,0)	{$\tc$};
\draw (a1.south)  -- ++(0,-0.1) --  ++(1,0) --  (a2);
\draw (a4.south)  -- ++(0,-0.1) --  ++(1,0) --  (a5);
\draw (a7.south)  -- ++(0,-0.1) --  ++(1,0) --  (a8);
\draw (a9.south)  -- ++(0,-0.1) --  ++(2,0) --  (a11);
\draw (a13.south)  -- ++(0,-0.1) --  ++(1,0) --  (a14);
\draw (a16.south)  -- ++(0,-0.1) --  ++(1,0) --  (a17);
\draw (a19.south)  -- ++(0,-0.1) --  ++(2,0) --  (a21);
\draw (a22.south)  -- ++(0,-0.1) --  ++(1,0) --  (a23);
\draw (a25.south)  -- ++(0,-0.1) --  ++(1,0) --  (a26);
\draw (a28.south)  -- ++(0,-0.1) --  ++(1,0) --  (a29);
\draw (a31.south)  -- ++(0,-0.1) --  ++(1,0) --  (a32);
\draw (a33.south)  -- ++(0,-0.1) --  ++(2,0) --  (a35);
\draw (a37.south)  -- ++(0,-0.1) --  ++(1,0) --  (a38);
\draw (a39.south)  -- ++(0,-0.2) --  ++(5,0) --  (a44);
\draw (a41.south)  -- ++(0,-0.1) --  ++(1,0) --  (a42);
\draw (a45.south)  -- ++(0,-0.1) --  ++(2,0) --  (a47);
\end{tikzpicture}
\\
&\begin{tikzpicture}[xscale=0.22,yscale=1, anchor=base,inner sep=1pt]
\node (a0)	at (0,0)	{$\ta$};
\node (a1)	at (1,0)	{$\tb$};
\node (a2)	at (2,0)	{$\tc$};
\node (a3)	at (3,0)	{$\ta$};
\node (a4)	at (4,0)	{$\tc$};
\node (a5)	at (5,0)	{$\tbb$};
\node (a6)	at (6,0)	{$\ta$};
\node (a7)	at (7,0)	{$\tb$};
\node (a8)	at (8,0)	{$\tc$};
\node (a9)	at (9,0)	{$\tbb$};
\node (a10)	at (10,0)	{$\ta$};
\node (a11)	at (11,0)	{$\tc$};
\node (a12)	at (12,0)	{$\ta$};
\node (a13)	at (13,0)	{$\tb$};
\node (a14)	at (14,0)	{$\tc$};
\node (a15)	at (15,0)	{$\tbb$};
\node (a16)	at (16,0)	{$\ta$};
\node (a17)	at (17,0)	{$\tb$};
\node (a18)	at (18,0)	{$\tc$};
\node (a19)	at (19,0)	{$\ta$};
\node (a20)	at (20,0)	{$\tc$};
\node (a21)	at (21,0)	{$\tbb$};
\node (a22)	at (22,0)	{$\ta$};
\node (a23)	at (23,0)	{$\tb$};
\node (a24)	at (24,0)	{$\tc$};
\node (a25)	at (25,0)	{$\tbb$};
\node (a26)	at (26,0)	{$\ta$};
\draw[>=latex,->] (a26.north)++(0,0.3)  -- ++(0,-0.3);
\node (a27)	at (27,0)	{$\tc$};
\node (a28)	at (28,0)	{$\ta$};
\node (a29)	at (29,0)	{$\tb$};
\node (a30)	at (30,0)	{$\tc$};
\node (a31)	at (31,0)	{$\ta$};
\node (a32)	at (32,0)	{$\tc$};
\node (a33)	at (33,0)	{$\tbb$};
\node (a34)	at (34,0)	{$\ta$};
\node (a35)	at (35,0)	{$\tc$};
\node (a36)	at (36,0)	{$\ta$};
\node (a37)	at (37,0)	{$\tb$};
\node (a38)	at (38,0)	{$\tc$};
\node (a39)	at (39,0)	{$\tbb$};
\node (a40)	at (40,0)	{$\ta$};
\node (a41)	at (41,0)	{$\tb$};
\node (a42)	at (42,0)	{$\tc$};
\node (a43)	at (43,0)	{$\ta$};
\node (a44)	at (44,0)	{$\tc$};
\node (a45)	at (45,0)	{$\tbb$};
\node (a46)	at (46,0)	{$\ta$};
\node (a47)	at (47,0)	{$\tc$};
\draw (a1.south)  -- ++(0,-0.1) --  ++(1,0) --  (a2);
\draw (a4.south)  -- ++(0,-0.1) --  ++(1,0) --  (a5);
\draw (a7.south)  -- ++(0,-0.1) --  ++(1,0) --  (a8);
\draw (a9.south)  -- ++(0,-0.1) --  ++(2,0) --  (a11);
\draw (a13.south)  -- ++(0,-0.1) -- ++(1,0) --  (a14);
\draw (a15.south)  -- ++(0,-0.2) -- ++(17,0) --  (a32);
\draw (a17.south)  -- ++(0,-0.1) -- ++(1,0) --  (a18);
\draw (a20.south)  -- ++(0,-0.1) -- ++(1,0) --  (a21);
\draw (a23.south)  -- ++(0,-0.1) -- ++(1,0) --  (a24);
\draw (a25.south)  -- ++(0,-0.1) -- ++(2,0) --  (a27);
\draw (a29.south)  -- ++(0,-0.1) -- ++(1,0) --  (a30);
\draw (a33.south)  -- ++(0,-0.1) --  ++(2,0) --  (a35);
\draw (a37.south)  -- ++(0,-0.1) --  ++(1,0) --  (a38);
\draw (a39.south)  -- ++(0,-0.2) --  ++(5,0) --  (a44);
\draw (a41.south)  -- ++(0,-0.1) --  ++(1,0) --  (a42);
\draw (a45.south)  -- ++(0,-0.1) --  ++(2,0) --  (a47);
\end{tikzpicture}
\end{aligned}
\end{equation}
From this initial segment we see that the sequence $(\deg(j))_{j\geq}$ starts with the $48$ integers
\[\begin{aligned}
&0,0,0,0,0,0,0,0,0,0,-1,0,0,0,0,0,0,0,0,0,1,0,0,0,0,0,0,0,0,0,0,0,0,0,
\\&-1,0,0,0,0,0,-1,-1,-1,-1,0,0,-1,0,\ldots,
\end{aligned}
\]
corresponding to the first line of~\eqref{eqn_192_letters}.
Applying the substitution $(\varphi^+)^2$ on $\Lbbright\hspace{-0.5pt}\ta\hspace{-0.5pt}\Lcleft$ appearing on positions $169$--$171$, we obtain the following $48$ letters.
The marked letter $\ta$ has degree $-3$, and it corresponds to the position $(\tZ\tZ\tZ\tZ\tZ\tZ)_4$.
\begin{equation*}
\begin{tikzpicture}[xscale=0.22,yscale=1, anchor=base,inner sep=1pt]
\node (a0)	at (0,0)	{$\ta$};
\node (a1)	at (1,0)	{$\tb$};
\node (a2)	at (2,0)	{$\tc$};
\node (a3)	at (3,0)	{$\ta$};
\node (a4)	at (4,0)	{$\tc$};
\node (a5)	at (5,0)	{$\tbb$};
\node (a6)	at (6,0)	{$\ta$};
\node (a7)	at (7,0)	{$\tb$};
\node (a8)	at (8,0)	{$\tc$};
\node (a9)	at (9,0)	{$\tbb$};
\node (a10)	at (10,0)	{$\ta$};
\node (a11)	at (11,0)	{$\tc$};
\node (a12)	at (12,0)	{$\ta$};
\node (a13)	at (13,0)	{$\tb$};
\node (a14)	at (14,0)	{$\tc$};
\node (a15)	at (15,0)	{$\tbb$};
\node (a16)	at (16,0)	{$\ta$};
\node (a17)	at (17,0)	{$\tb$};
\node (a18)	at (18,0)	{$\tc$};
\node (a19)	at (19,0)	{$\ta$};
\node (a20)	at (20,0)	{$\tc$};
\node (a21)	at (21,0)	{$\tbb$};
\node (a22)	at (22,0)	{$\ta$};
\node (a23)	at (23,0)	{$\tb$};
\node (a24)	at (24,0)	{$\tc$};
\node (a25)	at (25,0)	{$\tbb$};
\draw[>=latex,->] (a26.north)++(0,0.3)  -- ++(0,-0.3);
\node (a26)	at (26,0)	{$\ta$};
\node (a27)	at (27,0)	{$\tc$};
\node (a28)	at (28,0)	{$\ta$};
\node (a29)	at (29,0)	{$\tb$};
\node (a30)	at (30,0)	{$\tc$};
\node (a31)	at (31,0)	{$\ta$};
\node (a32)	at (32,0)	{$\tc$};
\node (a33)	at (33,0)	{$\tbb$};
\node (a34)	at (34,0)	{$\ta$};
\node (a35)	at (35,0)	{$\tc$};
\node (a36)	at (36,0)	{$\ta$};
\node (a37)	at (37,0)	{$\tb$};
\node (a38)	at (38,0)	{$\tc$};
\node (a39)	at (39,0)	{$\tbb$};
\node (a40)	at (40,0)	{$\ta$};
\node (a41)	at (41,0)	{$\tb$};
\node (a42)	at (42,0)	{$\tc$};
\node (a43)	at (43,0)	{$\ta$};
\node (a44)	at (44,0)	{$\tc$};
\node (a45)	at (45,0)	{$\tbb$};
\node (a46)	at (46,0)	{$\ta$};
\node (a47)	at (47,0)	{$\tc$};
\draw (a0.south)++(-0.5,-0.3) -- ++(48,0);
\draw [dash pattern=on1.5off1.5] (a0.south)++(-2.5,-0.3) -- ++(2,0);
\draw [dash pattern=on1.5off1.5] (a0.south)++(47.5,-0.3) -- ++(2.3,0);
\draw (a1.south)  -- ++(0,-0.1) --  ++(1,0) --  (a2);
\draw (a4.south)  -- ++(0,-0.1) --  ++(1,0) --  (a5);
\draw (a7.south)  -- ++(0,-0.1) --  ++(1,0) --  (a8);
\draw (a9.south)  -- ++(0,-0.1) --  ++(2,0) --  (a11);
\draw (a13.south)  -- ++(0,-0.1) --  ++(1,0) --  (a14);
\draw (a15.south)  -- ++(0,-0.2) --  ++(17,0) -- (a32);
\draw (a17.south)  -- ++(0,-0.1) --  ++(1,0) --  (a18);
\draw (a20.south)  -- ++(0,-0.1) --  ++(1,0) --  (a21);
\draw (a23.south)  -- ++(0,-0.1) --  ++(1,0) --  (a24);
\draw (a25.south)  -- ++(0,-0.1) --  ++(2,0) --  (a27);
\draw (a29.south)  -- ++(0,-0.1) --  ++(1,0) --  (a30);
\draw (a33.south)  -- ++(0,-0.1) --  ++(2,0) --  (a35);
\draw (a37.south)  -- ++(0,-0.1) --  ++(1,0) --  (a38);
\draw (a39.south)  -- ++(0,-0.2) --  ++(5,0) --  (a44);
\draw (a41.south)  -- ++(0,-0.1) --  ++(1,0) --  (a42);
\draw (a45.south)  -- ++(0,-0.1) --  ++(2,0) --  (a47);
\end{tikzpicture}
\end{equation*}
In general, on position $(\tZ^{2k})_4$, a letter $\ta$ of degree $-k$ appears.
This can be seen by considering the images of $\ta$ and $\Lcleft$ under $\varphi^+$.

By Proposition~\ref{prp_rotation}, $\deg(j)$ has the following meaning in the case that $\autoseqplus_j=\ta$.
A number of $\deg^+(j)$ letters $\Lbleft$ is transferred from the right of the letter $\ta$ to the left of it; note that the letter $\ta$ is shifted to the right $\deg^+(j)$ places.
Analogously, $\deg^-(j)$ letters $\Lbbright$ are transferred from the left of $\ta$ to the right, and the letter $\ta$ is shifted to the left $\deg^-(j)$ places.
In total, the letter $\ta$ (among other letters) is shifted by $\deg(j)$ places, and $\tb$s or $\tbb$s are moved to account for the generated trailing space.
The proposition states that the letters to the left of $\ta$'s new position $j+\deg^+(j)$ are balanced --- after removing decorations and replacing $\tbb$ by $\tb$, the letters $\ta$, $\tb$, and $\tc$ occur the same number of times.
If $\autoseqplus_j\in\{\Lcleft,\Lcright\}$, similar considerations hold.
The case of letters of degree $\tb$ is different, since a single rotation may shift such a letter to a remote place.

\newcommand{\radius}{3}
\newcommand{\rot}{15}
\begin{figure}
\begin{center}
\begin{tikzpicture}[	description/.style={fill=white,inner sep=1pt},
						cross line/.style={preaction={draw=white,-,line width=3pt}},
						scale=0.9]

	\node (c)  at (0,0)	{$\ta$};
	\node[circle,minimum size=6mm,draw] (ccopy)  at (0,0) {};
	\node (n0)	at ({0*360/6}:\radius)	{$\Lbright$};
	\node[circle,minimum size=6mm,draw] (ncopy0)  at ({0*360/6}:\radius) {};
	\node (n1)  at ({1*360/6}:\radius)	{$\Lbbright$};
	\node[circle,minimum size=6mm,draw] (ncopy1)  at ({1*360/6}:\radius) {};
	\node (n2)	at ({2*360/6}:\radius)	{$\Lcleft$};
	\node[circle,minimum size=6mm,draw] (ncopy2)  at ({2*360/6}:\radius) {};
	\node (n3)  at ({3*360/6}:\radius)	{$\Lbleft$};
	\node[circle,minimum size=6mm,draw] (ncopy3)  at ({3*360/6}:\radius) {};
	\node (n4)	at ({4*360/6}:\radius)	{$\Lbbleft$};
	\node[circle,minimum size=6mm,draw] (ncopy4)  at ({4*360/6}:\radius) {};
	\node (n5)	at ({5*360/6}:\radius)	{$\Lcright$};
	\node[circle,minimum size=6mm,draw] (ncopy5)  at ({5*360/6}:\radius) {};

 \path[>=latex,->,font=\scriptsize] (ncopy0) edge [out=0*360/6+\rot,in=0*360/6-\rot,looseness=10] node[auto] {$\tiny{\tM\vert0}$} (ncopy0)
(ncopy1) edge [out=1*360/6+\rot,in=1*360/6-\rot,looseness=10] node[auto,above=1pt] {$\tM\vert0$} (ncopy1)
(ncopy2) edge [out=2*360/6+\rot,in=2*360/6-\rot,looseness=10] node[auto,above=1pt] {$\tO\vert0,\tM\vert0$} (ncopy2)
(ncopy3) edge [out=3*360/6+\rot,in=3*360/6-\rot,looseness=10] node[auto] {$\tM\vert0$} (ncopy3)
(ncopy4) edge [out=4*360/6+\rot,in=4*360/6-\rot,looseness=10] node[auto,below=1pt] {$\tM\vert0$} (ncopy4)
(ncopy5) edge [out=5*360/6+\rot,in=5*360/6-\rot,looseness=10] node[auto,below=1pt] {$\tO\vert0,\tM\vert0$} (ncopy5)
(ccopy) edge [out=270+\rot,in=270-\rot,looseness=10] node[auto] {$\tiny{\tO\vert0,\tM\vert0}$} (ccopy);

	\node[rectangle, draw] (start)	at ({2.6*360/6}:{\radius*1})	{\scriptsize start};
 \path[>=latex,->,font=\scriptsize]
  (start) edge (ccopy)
(ncopy1) edge node[auto] {$\tL\vert0$} (ncopy0)
(ncopy1.{180-\rot}) edge node[auto, swap] {$\tZ\vert0$} (ncopy2.{\rot})
(ncopy2.{360-\rot}) edge node[auto, swap] {$\tL\vert0$} (ncopy1.{180+\rot})
(ncopy0) edge node[auto,near end] {$\tO\vert0$} (ncopy5)
(ncopy5.{180-\rot}) edge node[auto, swap] {$\tL\vert0$} (ncopy4.{\rot})
(ncopy4.{360-\rot}) edge node[auto, swap] {$\tZ\vert0$} (ncopy5.{180+\rot})
(ncopy4.{120-\rot}) edge node[auto, swap] {$\tL\vert0$} (ncopy3.{300+\rot})
(ncopy3.{300-\rot}) edge node[auto, swap,near end, below left=-3pt] {$\tL\vert1$} (ncopy4.{120+\rot})
(ccopy.{360-\rot}) edge node[auto, swap] {$\tL\vert0$} (ncopy0.{180+\rot})
(ncopy0.{180-\rot}) edge node[auto, swap] {$\tZ\vert0$} (ccopy.{\rot})
(ccopy.{120-\rot}) edge node[auto, swap] {$\tZ\vert0$} (ncopy2.{300+\rot})
(ncopy2.{300-\rot}) edge node[auto, swap, near start, below left=-3pt] {$\tZ\vert\!-\!1$} (ccopy.{120+\rot})
(ncopy1) edge node[auto] {$\tO\vert0$} (ccopy)
(ncopy3) edge node[auto, swap] {$\tZ\vert1$} (ccopy)
(ncopy4) edge node[auto, swap, near start, below right=-3pt] {$\tO\vert1$} (ccopy)
(ncopy5) edge node[auto, swap] {$\tZ\vert0$} (ccopy);

\draw[>=latex,->,font=\scriptsize] (ncopy3)  -- ++(0,{-\radius*4/3}) --node [auto,swap] (e1)	{$\tO\vert1$} ++({\radius*5/6},0) -- (ncopy5);
\draw[>=latex,->,font=\scriptsize,cross line] (ncopy0)  -- ++(0,{-\radius*4/3}) --node [auto] (e2)	{$\tL\vert0$} ++({-\radius*5/6},0) -- (ncopy4);

\end{tikzpicture}
\end{center}
\caption{A base-$4$ transducer that generates the degree sequence}
\label{fig_hexagon}
\end{figure}
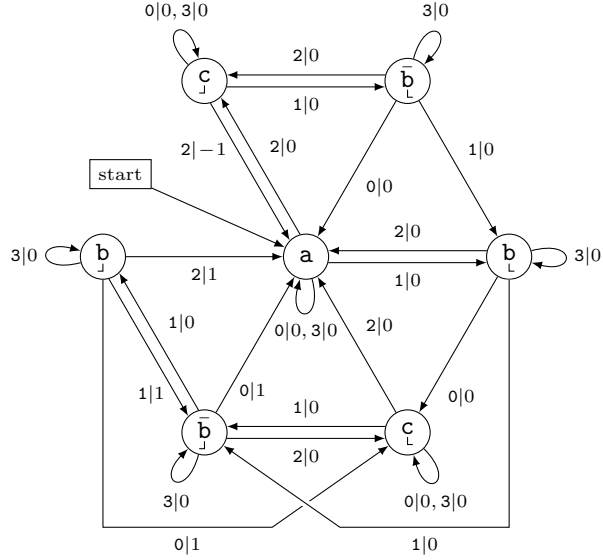


The transducer $\mathcal T_1$ displayed in Figure~\ref{fig_hexagon} allows us to compute the degree of an arbitrary position $j$: starting from the centre node, we traverse the graph, guided by the base-$4$ expansion $\delta_{\nu-1}\cdots\delta_0$ of $j$ (read from left to right).
Along the way, we sum up the numbers $k$ whenever a vertex $\delta_i\mid k$ is taken.
The sum over these numbers is the degree of $j$, multiplied by $3$.
The transducer $\mathcal T_1$ is derived directly from the decorated, $4$-uniform morphism $\varphi^+$ given in~\eqref{eqn_decorated_substitution}.
Note that a change of degree takes place whenever new letters are inserted, by virtue of the morphism $\varphi^+$, into the range of already existing links, which happens for $\Lbleft$ and $\Lbbleft$; or, if a new link together with a letter $\ta$ in its range is created, which happens for $\Lcleft$.

We will now apply Proposition~\ref{prp_rotation} to the discrepancy $D_N$ of occurrences of $\tO\tL$ in $\bt$.

\begin{proposition}\label{prp_discrepancy_from_degree}
Let $j\in\mathbb N$ and set $d=\deg(j)$.
Then
\begin{equation}\label{eqn_discrepancy_from_degree}
\begin{array}{rll@{\hspace{4em}}rll}
D_{4j}&=d/3,&\mbox{if }\autoseqplus_j=\ta;\\
D_{4j}&=d/3+1/3,&\mbox{if }\autoseqplus_j=\Lbbleft;\\
D_{4j}&=d/3,&\mbox{if }\autoseqplus_j=\Lbbright;\\[3mm]
D_{4j+2}&=d/3+1/3,&\mbox{if }\autoseqplus_j=\Lbleft;\\
D_{4j+2}&=d/3,&\mbox{if }\autoseqplus_j=\Lbright;\\
D_{4j+2}&=d/3-1/3,&\mbox{if }\autoseqplus_j=\Lcleft;\\
D_{4j+2}&=d/3,&\mbox{if }\autoseqplus_j=\Lcright.
\end{array}
\end{equation}
In each of these cases, the subscript of $D$ is the position in $\bt$ that corresponds to the $j$th letter in $\autoseq$ via~\eqref{eqn_TM_autoseq_concatenation}.

\end{proposition}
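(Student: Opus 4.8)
The plan is to track precisely how many $\tO\tL$-blocks have been emitted by $f$ applied to the prefix $\autoseq_0\cdots\autoseq_{j-1}$, and then add the contribution of the partial block $f(\autoseq_j)$ up to the relevant position. Recall from Lemma~\ref{lem_correspondence} that $\bt = f(\autoseq_0)f(\autoseq_1)\cdots$, and that the lengths of $f(\ta),f(\tb),f(\tc)$ are $6,4,2$. Hence the position in $\bt$ corresponding to the start of $f(\autoseq_j)$ is $6\cdot(\#\ta\text{'s before }j)+4\cdot(\#\tb\text{'s})+2\cdot(\#\tc\text{'s})$; a direct computation shows this is $\equiv 0 \bmod 4$ in all cases where the subscript $4j$ occurs (types $\ta,\tbb$, with $f(\ta),f(\tbb)$ of even length counted correctly via the decorated lengths), giving the subscripts $4j$ and $4j+2$ as claimed. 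The $\tO\tL$-count up to the start of $f(\autoseq_j)$ equals $2\cdot(\#\ta)+1\cdot(\#\tb)+1\cdot(\#\tc)$ by the substitution $r$ of~\eqref{eqn_BA_transition}, and then one adds $0$ (for $D_{4j}$) or $1$ (for $D_{4j+2}$, since $f$ of each letter begins $\tO\tL$) for the partial block.

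The key idea is that Proposition~\ref{prp_rotation} converts this counting problem into a balance statement about $\autoseqplus$. After performing the rotations \texttt{RotateAlongLinks}, the sequence $\gamma$ of the rotated word is $(\ta\tb\tc)^\omega$, so among the first $m$ letters of the rotated word the numbers of $\ta$'s, $\tb$'s, $\tc$'s differ by at most $1$ in the obvious pattern. The letter $\ta$ originally at position $j$ is displaced by exactly $\deg(j)=\deg^+(j)-\deg^-(j)$ places (to the right by $\deg^+(j)$ as each enclosing $\Lcright\cdots\Lbleft$ link drags it right, to the left by $\deg^-(j)$ for each $\Lcright\cdots\Lbbleft$ link — here I would cite the discussion following the display~\eqref{eqn_192_letters}, in particular that the letters strictly to the left of $\ta$'s new position $j+\deg(j)$ are balanced). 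Translating ``balanced'' back through $r$: if before position $j$ the counts of $\ta,\tb,\tc$ in $\autoseq$ are $n_\ta,n_\tb,n_\tc$, then the $\tO\tL$-count up to $4j$ is $2n_\ta+n_\tb+n_\tc$, while $4j/3 = (6n_\ta+4n_\tb+2n_\tc+\text{partial})/3$; the difference $D_{4j}$ therefore measures exactly $\tfrac13$ times the imbalance $(n_\ta - ?)$, which the rotation identity pins down as $d/3$ plus a constant correction depending only on the \emph{type} and \emph{decoration} of $\autoseqplus_j$ (because the letter's position within the base-word $w_0=\ta\tb\tc$, $w_1=\ta\tc$, $w_2=\tbb$ is fixed by the decoration, cf. Lemma~\ref{lem_local}). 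This is where the case distinction over the seven letters of $K$ comes from, and a short finite check in each of the seven cases — using the concrete shape of $f$ restricted to the appropriate prefix — produces the additive constants $0,+\tfrac13,0,+\tfrac13,0,-\tfrac13,0$.

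I would organize the write-up as: (i) fix $j$, let $N$ be the $\bt$-position of the $j$th letter of $\autoseq$, verify $N\in\{4j,4j+2\}$ according to the type/decoration by the length bookkeeping above; (ii) express $D_N$ in terms of $n_\ta-n_\tc$ (the counts before $j$), absorbing $n_\tb$ and the partial-block term into the constant, using $2n_\ta+n_\tb+n_\tc - \tfrac13(6n_\ta+4n_\tb+2n_\tc) = -\tfrac13(n_\tb-n_\ta) - \tfrac13(n_\tb-n_\tc)$ type identities together with $n_\ta=n_\tb=n_\tc$ holding up to bounded error on prefixes of $\autoseq$ that are concatenations of $w_0,w_1,w_2$ — actually I would avoid this and instead argue directly via the rotated word, which makes the balance \emph{exact}; (iii) invoke Proposition~\ref{prp_rotation}: in the rotated word the prefix ending just before the displaced $\ta$ (or just before the relevant landmark when $\autoseqplus_j$ has type $\tb$ or $\tc$) is exactly balanced, and count how the $\deg^\pm$ links move letters of type $\tb$ across position $j$, contributing $\pm 1$ to the $\tb$-count and hence $\mp\tfrac13$ shifts; (iv) read off the seven constants.

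The main obstacle I expect is step (iii) for the letters of type $\tb$: a single rotation can transport a $\tb$-letter far away, so the clean ``displaced by $\deg(j)$ places'' picture that works for type-$\ta$ letters needs replacing by a careful accounting of which $\tb$'s end up before versus after the $\tc$-letter they are linked to. The resolution is that each type-$\tb$ letter at position $j$ is itself moved to sit immediately before its partner $\tc$ (by the description of \texttt{RotateAlongLinks}), and the relevant balance landmark is the position of that $\tc$; one then counts the \emph{other} links enclosing this landmark exactly as in the type-$\ta$ case. Making this bookkeeping rigorous — essentially showing $\deg$ is the right invariant at a type-$\tb$ position too, once one measures at the partner $\tc$ — is the only genuinely delicate point; everything else is finite verification over $K$ and elementary length arithmetic with $f$.
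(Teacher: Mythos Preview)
Your overall strategy matches the paper's: invoke Proposition~\ref{prp_rotation} to turn the count into an exact balance statement via the rotated word $(\ta\tb\tc)^\omega$, observe that only type-$\tb$ letters cross position $j$ under the rotations (so the net flow is $\deg(j)$), deduce the counts $n_\ta,n_\tb,n_\tc$ of each type to the left of $j$ in $\autoseq$, and then translate through $f$ into the $\tO\tL$-count and position $N$. The paper carries this out explicitly for the cases $\autoseqplus_j=\ta$, $\Lcleft$, and $\Lcright$; note that $N=4j$ or $4j+2$ drops out of the same count (e.g.\ $6m+4(m-d)+2m=4(3m-d)=4j$ in the $\ta$ case) rather than being a separate preliminary step as in your (i).

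Where you diverge is in the four type-$\tb$ cases. You propose to follow the $\tb$-letter to its partner $\tc$ and set up the balance landmark there; this would require relating $\deg(j)$ to the situation at a possibly distant position, and is exactly the complication you flag as ``genuinely delicate''. The paper avoids this entirely. By Lemma~\ref{lem_local}, each of $\Lbbleft$, $\Lbbright$ is immediately \emph{followed} by $\ta$, and each of $\Lbleft$, $\Lbright$ is immediately \emph{preceded} by $\ta$. So every type-$\tb$ case reduces in one line to the already-established $\ta$ case at the adjacent position $j\pm 1$, with a one-link adjustment to the degree (the neighbouring $\ta$ has degree $d$, $d-1$, $d+1$, $d$ in the four sub-cases, according to whether the connector at $j$ encloses $j\pm 1$ or not) and an offset in the Thue--Morse subscript coming from $\lvert f(\tb)\rvert=4$ or $\lvert f(\ta)\rvert=6$. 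This completely sidesteps the obstacle you identified; there is no need to track the partner $\tc$.
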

\begin{proof}
Choose $\varepsilon\in\{0,1,2\}$ and $n\in\mathbb N$ such that $j=3n+\varepsilon$.
Let us consider each of the seven cases corresponding to letters from $K$.

\textbf{First case.} Assume that $\autoseqplus_j=\ta$.
By Algorithm \texttt{RotateAlongLinks} and Proposition~\ref{prp_rotation},
a total of $d$ letters of type $\tb$ have to be shifted from the right of our $\ta$ in question to the left (if $d>0$), or the other way round (if $d<0$).
After this procedure, the numbers of letters of types $\ta$, $\tb$, and $\tc$ to the left are equal.
It follows that $\varepsilon\equiv-d\bmod 3$;
moreover, $m=n+(\varepsilon+d)/3$ is the number of letters $\ta$ (and also the number of letters of type $\tc$) strictly to the left of $j$.
The number of letters of type $\tb$ to the left of $j$ is $m'=n+(\varepsilon-2d)/3$.
Symbols of type $\ta$ contribute two blocks $\tO\tL$ and correspond to a factor of length six in $\bt$, by~\eqref{eqn_TM_abc_correspondence};
letters of type $\tb$ contribute one block and correspond to a factor of length four; letters of type $\tc$ contribute one block and correspond to a factor of length two.
It follows that below position
\begin{equation*}
N=(6+2)\left(n+\frac{\varepsilon+d}3\right)+4\left(n+\frac{\varepsilon-2d}3\right)
=12n+4\,\varepsilon=4j,
\end{equation*}
we find
\[(2+1)\left(n+\frac{\varepsilon+d}3\right)+\left(n+\frac{\varepsilon-2d}3\right)
=4n+4\,\varepsilon/3+d/3
\]
blocks $\tO\tL$.
This proves the case $\autoseqplus_j=a$.

\textbf{Second case.}
If $\autoseqplus_j=\Lbbleft$, we note that necessarily $\autoseqplus_{j+1}=\ta$,
by Lemma~\ref{lem_local}.
We apply the first case on position $j+1$, which has degree $d$.
Noting that a letter of type $\tb$ in $\autoseqplus$ corresponds to $\tO\tL\tL\tO$ in Thue--Morse,
we obtain $D_{4j}=D_{4j+4}+1/3=d/3+1/3$, where $4j$ resp. $4j+2$ correspond to the $j$th resp. $(j+1)$th position in $\autoseqplus$.

\textbf{Third case.}
Assume that $\autoseqplus_j=\Lbbright$.
In this case, the letter at $j+1$ is $\ta$ by Lemma~\ref{lem_local}
and $j+1$ has degree $d-1$.
It follows that
$D_{4j}=D_{4j+4}+1/3=d/3-1/3+1/3=d/3$.

\textbf{Fourth case.}
If $\autoseqplus_j=\Lbleft$, we note that necessarily $\autoseqplus_{j-1}=\ta$;
we apply the first case on position $j-1$, which has degree $d+1$.
since $\ta$ corresponds to $\tO\tL\tL\tO\tL\tO$ in Thue--Morse, we have
$D_{4j+2}=D_{4j-4}=d/3+1/3$, where $4j-4$ resp. $4j+2$ correspond to the $(j-1)$th resp. $j$th positions in $\autoseqplus$.

\textbf{Fifth case.} Assume that $\autoseqplus_j=\Lbright$. Then $\autoseqplus_{j-1}=\ta$, and $j-1$ has degree $d$.
Analogously to the fourth case, we obtain
$D_{4j+2}=D_{4j-4}=d/3$.

\textbf{Sixth case.}
If $\autoseqplus_j=\Lcleft$, this letter is connected to a letter of type $\tb$ to the left, which stays on the left of $\Lcleft$ after applying the rotations.
Therefore the number of letters of type $\tb$ to the left are changed by $d$, and the numbers of letters of types $\ta$ or $\tc$ to the left stay the same.
Similarly to the first case, it follows that $\varepsilon\equiv 2-d\bmod 3$.
The numbers of letters, of type $\ta$, $\tb$, and $\tc$ respectively, to the left of $j$, are therefore
$m=n+(\varepsilon+d+1)/3$, $m-d$, and $m-1$ respectively.
It follows that, below position
\begin{align*}
N&=6\left(n+\frac{\varepsilon+d+1}3\right)
+4\left(n+\frac{\varepsilon-2d+1}3\right)
+2\left(n+\frac{\varepsilon+d-2}3\right)
\\&=12n+4\,\varepsilon+2=4j+2
,
\end{align*}
there are
\[2\left(n+\frac{\varepsilon+d+1}3\right)
  +\left(n+\frac{\varepsilon-2d+1}3\right)
  +\left(n+\frac{\varepsilon+d-2}3\right)
\!=4n+\frac{4\,\varepsilon}3+\frac d3+\frac13
\]
blocks $\tO\tL$.

\textbf{Seventh case.}
If $\autoseqplus_j=\Lcright$, a letter of type $\tb$ is taking its place after one rotation.
In this case, we have $\varepsilon\equiv 1-d\bmod 3$;
the numbers of letters to the left of $j$, of types $\ta$, $\tb$, and $\tc$ respectively, are therefore
$m=n+(\varepsilon+d+2)/3$, $m-d-1$, and $m-1$ respectively.
Therefore, below position
\begin{align*}
N&=6\left(n+\frac{\varepsilon+d+2}3\right)
+4\left(n+\frac{\varepsilon-2d-1}3\right)
+2\left(n+\frac{\varepsilon+d-1}3\right)
\\&=12n+4\,\varepsilon+2=4j+2
,
\end{align*}
there are
\[2\left(n+\frac{\varepsilon+d+2}3\right)
  +\left(n+\frac{\varepsilon-2d-1}3\right)
  +\left(n+\frac{\varepsilon+d-1}3\right)
\!=4n+\frac{4\,\varepsilon}3+\frac d3+\frac23
\]
blocks $\tO\tL$, which proves the last case.
\end{proof}

Since $\deg(j)$ is easy to obtain, Proposition~\ref{prp_discrepancy_from_degree} gives us a simple method to compute the discrepancy $D_N$ for any given $N$.
\begin{proposition}\label{prp_regular}
Let $N\geq 0$ be an integer and $j=\lfloor N/4\rfloor$.
\begin{enumerate}
\item
If $\autoseqplus_j\in\{\ta,\Lbbleft,\Lbbright\}$,
choose $\delta=D_{4j}/3=\deg(j)/3+\varepsilon$, where $\varepsilon\in\{0,1/3\}$ is given by the first block of~\eqref{eqn_discrepancy_from_degree}.
Then
\begin{equation}\label{eqn_even_case}
\bigl(D_{4j},D_{4j+1},D_{4j+2},D_{4j+3}\bigr)=
\bigl(\delta,\delta+2/3,\delta+1/3,\delta\bigr).
\end{equation}
\item If $\autoseqplus_j\in\{\Lbleft,\Lbright,\Lcleft,\Lcright\}$, choose $\delta=D_{4j+2}/3=\deg(j)/3+\varepsilon$, where $\varepsilon\in\{-1/3,0,1/3\}$ is given by the second block of~\eqref{eqn_discrepancy_from_degree}.
Then
\begin{equation}\label{eqn_odd_case}
\bigl(D_{4j},D_{4j+1},D_{4j+2},D_{4j+3}\bigr)=
\bigl(\delta+2/3,\delta+1/3,\delta,\delta+2/3\bigr).
\end{equation}
\end{enumerate}
\end{proposition}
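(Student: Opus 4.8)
The plan is to deduce everything from Proposition~\ref{prp_discrepancy_from_degree} together with the trivial one-step recurrence for the discrepancy. Directly from~\eqref{eqn_def_discrepancy}, $D_{M+1}-D_M$ equals $2/3$ if $\bt_M\bt_{M+1}=\tO\tL$ and $-1/3$ otherwise; hence, once a single one of the four values $D_{4j},D_{4j+1},D_{4j+2},D_{4j+3}$ is known, the remaining three are determined by the block $\bt_{4j}\bt_{4j+1}\bt_{4j+2}\bt_{4j+3}$. So the proof reduces to reading off these four letters from the factorization $\bt=f(\autoseq_0)f(\autoseq_1)\cdots$ of~\eqref{eqn_TM_autoseq_concatenation}, using the block-starting positions recorded in Proposition~\ref{prp_discrepancy_from_degree}.

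For part~(1), $\autoseqplus_j\in\{\ta,\Lbbleft,\Lbbright\}$, so $\autoseq_j=\gamma(\autoseqplus_j)\in\{\ta,\tb\}$; by Proposition~\ref{prp_discrepancy_from_degree} the block $f(\autoseq_j)$ begins at position $4j$ of $\bt$ and $D_{4j}=\delta$. Since $f(\ta)=\tO\tL\tL\tO\tL\tO$ and $f(\tb)=\tO\tL\tL\tO$ both have the prefix $\tO\tL\tL\tO$, I obtain $\bt_{4j}\bt_{4j+1}\bt_{4j+2}\bt_{4j+3}=\tO\tL\tL\tO$, so the step recurrence produces successive increments $+2/3$, $-1/3$, $-1/3$; starting from $D_{4j}=\delta$ this is exactly~\eqref{eqn_even_case}.

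For part~(2), $\autoseqplus_j\in\{\Lbleft,\Lbright,\Lcleft,\Lcright\}$, and since $\autoseqplus_0=\ta$ we have $j\ge1$. By Proposition~\ref{prp_discrepancy_from_degree} the block $f(\autoseq_j)$ begins at position $4j+2$ and $D_{4j+2}=\delta$, and as each of $f(\ta),f(\tb),f(\tc)$ begins with $\tO\tL$ we get $\bt_{4j+2}\bt_{4j+3}=\tO\tL$. The letters $\bt_{4j}\bt_{4j+1}$ are the length-two suffix of $f(\autoseq_{j-1})$, and the point to check is that $\autoseq_{j-1}\ne\tc$: if it were, then $\autoseqplus_{j-1}\in\{\Lcleft,\Lcright\}$, and Proposition~\ref{prp_discrepancy_from_degree} applied at index $j-1$ would put the start of $f(\autoseq_{j-1})$ at $4(j-1)+2$; since $f(\tc)$ has length $2$, this forces $f(\autoseq_j)$ to begin at $4j$ rather than $4j+2$, a contradiction. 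Hence $\autoseq_{j-1}\in\{\ta,\tb\}$, and since $f(\ta)$ and $f(\tb)$ both end in $\tL\tO$ we get $\bt_{4j}\bt_{4j+1}=\tL\tO$. Thus $\bt_{4j}\bt_{4j+1}\bt_{4j+2}\bt_{4j+3}=\tL\tO\tO\tL$, the step recurrence gives increments $-1/3$, $-1/3$, $+2/3$, and solving back from $D_{4j+2}=\delta$ yields~\eqref{eqn_odd_case}.

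Beyond invoking Proposition~\ref{prp_discrepancy_from_degree}, the whole argument is bookkeeping; the one place where a short argument is needed is the parity computation in part~(2) that identifies the two letters of $\bt$ immediately preceding the block $f(\autoseq_j)$ as $\tL\tO$. I do not anticipate any genuine difficulty beyond that.
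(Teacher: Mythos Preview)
Your proof is correct and follows essentially the same route as the paper: in both cases one identifies the four letters $\bt_{4j}\bt_{4j+1}\bt_{4j+2}\bt_{4j+3}$ via the factorization~\eqref{eqn_TM_autoseq_concatenation} and then applies the one-step recurrence for $D$. The only minor difference is in part~(2), where the paper shows $\autoseq_{j-1}\ne\tc$ by invoking Lemma~\ref{lem_local} (for $\Lbleft,\Lbright$) and squarefreeness of $\autoseq$ (for $\Lcleft,\Lcright$), whereas you give a unified position-contradiction argument using the last sentence of Proposition~\ref{prp_discrepancy_from_degree}; both are valid and equally short.
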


The scaled sequence of discrepancies (multiplied by $3$) therefore begins with the $48$ integers
\[
\begin{array}{llllll}
0,2,1,0,
&2,1,0,2,
&1,0,-1,1,
&0,2,1,0,
&2,1,0,2,
&1,3,2,1,
\\0,2,1,0,
&2,1,0,2,
&1,0,-1,1,
&0,2,1,0,
&-1,1,0,-1,
&1,0,-1,1.
\end{array}
\]

The partition into segments of length four is for better readability.
Each segment corresponds to one symbol in $\autoseqplus$.
\begin{proof}[Proof of Proposition~\ref{prp_regular}]
For the first sentence of each of the two cases, there is nothing to show, by Proposition~\ref{prp_discrepancy_from_degree}.
Let us begin with the first case.
By the proposition, the position $4j$ in the Thue--Morse sequence corresponds to a letter $\ta$ or $\tb$ in $\autoseq$ (on position $j$), and by~\eqref{eqn_TM_abc_correspondence} we have $(\bt_{4j},\bt_{4j+1},\bt_{4j+2},\bt_{4j+3})=(\tO\tL\tL\tO)$.
Therefore~\eqref{eqn_even_case} follows.
Concerning the second case, Proposition~\ref{prp_discrepancy_from_degree}
gives us an expression for $D_{4j+2}$ in terms of $\deg(j)$,
and the position $4j+2$ corresponds to the index $j$ in $\autoseq$.
By~\eqref{eqn_TM_abc_correspondence}, we have
$(\bt_{4j+2},\bt_{4j+3})=(\tO,\tL)$.
Therefore
\[\bigl(D_{4j+2},D_{4j+3}\bigr)=
\bigl(\delta,\delta+2/3\bigr).\]
In order to compute $D_{4j}$ and $D_{4j+1}$ in this case,
we note that $\Lbleft$ and $\Lbright$ are always preceded by $\ta$ (as we noted in the proof of Proposition~\ref{prp_discrepancy_from_degree}), and $\Lcleft$ and $\Lcright$ are always preceded by a letter of type $\ta$ or $\tb$, since $\autoseq$ is squarefree.
It follows that the letter at index $j-1$ is of type $\ta$ or $\tb$, and therefore
$(\bt_{4j},\bt_{4j+1})=(\tL\tO)$.
Consequently, we have
\[\bigl(D_{4j},D_{4j+1}\bigr)=
\bigl(\delta+2/3,\delta+1/3\bigr),
\]
and~\eqref{eqn_odd_case} follows.
\end{proof}

\subsection{Proof of Theorem~\ref{thm_discrepancy}}
We may now show that the sequence $(D_N)_{N\geq 0}$ of discrepancies is given by a base-$2$ transducer.
The transducer in Figure~\ref{fig_hexagon} may be described by eight $7\times7$-matrices $A^{(\ell)}, W^{(\ell)}$, for $0\leq \ell<4$, where rows and columns are indexed by the letters of $K$, in the order $(\ta\Lbbleft,\Lbbright,\Lbleft,\Lbright,\Lcleft,\Lcright)$.

The entry $A^{(\ell)}_{i,j}$ equals $1$ if there is an arrow with first component equal to $\ell$ from the $j$th node to the $i$th node in Figure~\ref{fig_hexagon}, and it is zero otherwise.
The matrices $A^{(\ell)}$ are permutation matrices.
The entry $W^{(\ell)}_{i,j}$ is the second component of the arrow from $j$ to $i$ with first component $\ell$, if there is one, and equal to zero otherwise.

The final modification given by~\eqref{eqn_even_case} and~\eqref{eqn_odd_case} 
is dealt with by four more matrices $Z^{(\ell)}$, where $0\leq \ell<4$.
The first three columns of these matrices are given by~\eqref{eqn_even_case}, as follows.
Define the quadruple $(q_0,q_1,q_2,q_3)=(0,2/3,1/3,0)$ (containing the shifts in~\eqref{eqn_even_case}), and the triple $(r_1,r_2,r_3)=(0,1/3,0)$ (taking care of the shifts present in the first block of~\eqref{eqn_discrepancy_from_degree}).
Let $1\leq j\leq 3$ (corresponding to the letter at which an arrow starts), and $0\leq \ell<4$ (a base-$4$ digit; the first component of the label of the arrow).
There is a unique $i\in\{1,\ldots,7\}$ such that $A^{(\ell)}_{i,j}=1$, and we set $Z^{(\ell)}_{i,j}=q_\ell+r_j$, and $Z^{(\ell)}_{i',j}=0$ for $i'\neq i$.
The remaining four columns are filled with the help of~\eqref{eqn_odd_case}, as follows.
Define $(\tilde q_0,\tilde q_1,\tilde q_2,\tilde q_3)=(2/3,1/3,0,2/3)$ and $(r_4,r_5,r_6,r_7)=(1/3,0,-1/3,0)$.
Let $4\leq j\leq 7$ and $0\leq \ell<4$.
There is a unique $i\in\{1,\ldots,7\}$ such that $A^{(\ell)}_{i,j}=1$, and we set $Z^{(\ell)}_{i,j}=\tilde q_\ell+r_j$, and $Z^{(\ell)}_{i',j}=0$ for $i'\neq i$.

In order to generate the discrepancy, we blow up the transducer by a factor
$28$, in order to keep track of the arrow that led to the current node (that is, we need to save the previously read digit $\ell'\in\{0,1,2,3\}$ and the node in $\mathcal T_1$ that was last visited).

In each step, the contribution of $Z^{(\ell')}$ is cancelled out, and the contributions of $A^{(\ell')}$ and $Z^{(\ell)}$ are added (where $\ell$ is the currently read digit). More precisely, let $(i,\ell',j)$, for $1\leq i,j\leq 7$ and $0\leq \ell'<4$, be the $196$ nodes of our new transducer $\mathcal T_2$.
There is an arrow from $(j,\ell',k)$ to $(i,\ell,j')$ if and only if
$j=j'$ and $A^{(\ell)}_{j,i}=1$ --- that is, if there is an arrow from $j$ to $i$ in $\mathcal T_1$ whose label has $\ell$ as its first component.
We may now define the weight of an arrow $(j,\ell',k)\rightarrow (i,\ell,j)$ as
\[Z^{(\ell)}_{i,j}-Z^{(\ell')}_{j,k}+W^{(\ell')}_{j,k}.\]
The initial node is $(1,0,1)$,
which corresponds to the fact that leading zeros do not make a difference.
Let us illustrate, by a short but representative example, the easy proof that the transducer $\mathcal T_2$ generates the discrepancy sequence.
We wish to compute the discrepancy $D_{41}=D_{(\tZ\tZ\tL)_4}$.
The corresponding path in $\mathcal T_2$ is given by
\[(1,0,1)\longrightarrow (5,2,1)\longrightarrow (1,2,5)\longrightarrow (4,1,1).\]
Note that the first and third components correspond to letters in $K$, that is, to nodes in $\mathcal T_1$, via
$1\rightleftharpoons\ta$, $4\rightleftharpoons \Lbright$, and $5\rightleftharpoons\Lcleft$.
The sum of the weights simplifies, due to a telescoping sum and $W^{(0)}_{1,1}=Z^{(0)}_{1,1}=0$, to
\[W^{(2)}_{5,1}+W^{(2)}_{1,5}+
Z^{(1)}_{4,1}.
\]
The first two summands sum up to $\deg((\tZ\tZ)_4)=-1/3$ by the construction of our transducer,
while the last summand consists of two parts: the shift in the first line of the first block of~\eqref{eqn_discrepancy_from_degree} (which is $0$), and the shift in the second component of~\eqref{eqn_even_case} (which is $2/3$).
Summing up, we obtain $D_{41}=1/3$.
It is clear that the proof of the general case is not more complicated this example.

Since the integers $2$ and $4$ are multiplicatively dependent, in symbols, $2^m=4^n$ for $(m,n)=(2,1)$, the sequence $D$ is also generated by a base-$2$ transducer.
In order to carry out this reduction to base two, the four arrows starting from a given node in our base-$4$ transducer have to be replaced by a complete binary tree of depth $2$, where two auxiliary nodes have to be inserted.
The proposition is proved, and thus the first part of Theorem~\ref{thm_discrepancy}.

The output sum of a base-$q$ transducer is clearly bounded by a constant times the length of the base-$q$ expansion we feed into the transducer.
This immediately yields $D_N\ll \log N$.

We easily see from Figure~\ref{fig_hexagon} that the integers
\[
(\tZ^{2k})_4=2\,\frac{16^k-1}{3}
\quad\mbox{and}\quad
((\tL\tL\tO)^k)_4
=20\,\frac{64^k-1}{63}
\]
have degrees $-k$ and $k$ respectively, for $k\geq 1$,
and that the letter $\ta$ is attained at these positions.
Therefore Proposition~\ref{prp_discrepancy_from_degree} implies
\begin{equation}\label{eqn_explicit_discrepancy}
D_{8\hspace{0.5pt}(16^k-1)/3}=-k/3\quad\mbox{and}\quad
D_{80\hspace{0.5pt}(64^k-1)/63}=k/3
\end{equation}
for $k\geq 1$, and clearly $D_0=0$.
In particular,
$\{D_N:N\geq 0\}=(1/3)\mathbb Z$,
which finishes the proof of Theorem~\ref{thm_discrepancy}.
\qed

By considering the path given by $n'=(\tZ^{2k-1})_4$ instead, we end up in the node $\Lcleft$, and the position $n'$ has degree $-k+1$.
Proposition~\ref{prp_regular} implies
$D_n=-k/3$, where $n=4n'+2=((\tL\tO)^{4k})_2$.
This was observed by Jeffrey Shallit (private communication, 2021), but such an unboundedness result does not seem to be stated in the literature.

\subsection*{Acknowledgements.}
The author thanks Jeff Shallit for sharing with him the research question treated in this paper, constant interest in his research, and quick and informative answers to e-mails.
The question was presented to the author during the workshop `Numeration and Substitution' at the ESI Vienna (Austria) in July 2019.
We express our thanks to the ESI for providing optimal working conditions at the workshop, including offices for participants and blackboards in unconventional places.
Finally, we thank Michel Rigo for fruitful discussions on the topic and Clemens M\"ullner for pointing out that the case of arbitrary factors can be reduced to the case $\mathtt 0\mathtt 1$.
%
%
%

\bibliographystyle{line}
\bibliography{H}
\end{document}